\documentclass[11pt]{article}
\usepackage{graphicx}
\usepackage{amsmath,amssymb,amsthm}
\usepackage{mathtools}
\usepackage{xcolor}
\usepackage{caption}
\usepackage{subcaption}
\usepackage{comment}
\usepackage{hyperref}
\hypersetup{colorlinks,breaklinks,
	linkcolor=blue,urlcolor=blue,
	anchorcolor=blue,citecolor=blue}
\usepackage[nameinlink, capitalise, noabbrev]{cleveref}
\usepackage{dsfont}
\usepackage{nicematrix}
\usepackage{enumitem}
\usepackage{kotex}
\usepackage{mathrsfs}
\usepackage{overpic}
\usepackage{tikz}
\usepackage{makecell}
\usepackage[table]{xcolor}

\DeclareMathOperator*{\argmin}{arg\,min}

\newtheorem{theorem}{Theorem}[section]

\newtheorem{lemma}[theorem]{Lemma}  
\newtheorem{proposition}[theorem]{Proposition}
\newtheorem{corollary}[theorem]{Corollary}
\newtheorem{remark}[theorem]{Remark}
\numberwithin{equation}{section}
\newtheorem{result}{Result}[section]

\title{Spectral Selection in Symmetric Self-Attention Dynamics}
\author{
Christian Kuehn\\
Department of Mathematics, Technical University of Munich\\
Munich Data Science Institute (MDSI)\\
Munich Center for Machine Learning (MCML)\\
\texttt{ckuehn@ma.tum.de}
\and
Jaeyoung Yoon\thanks{Corresponding author.}\\
Department of Mathematics, Technical University of Munich\\
\texttt{wodud1516@gmail.com}
}

\begin{document}
	
	\maketitle
    % Physica D, SIAM DS, JNS, Nonlinear Dynamics
    \begin{abstract}
        We study self-attention dynamics on the unit sphere as an interacting particle system arising from an idealized Transformer-type update. Under a symmetry assumption on weight matrices given by $Q^\top K=V=V^\top$, the flow admits a gradient-flow structure and an exact reformulation in the eigenbasis of $V$, revealing a spectral mode-selection mechanism. We show that the dynamics exhibits two distinct asymptotic scenarios: homogeneous alignment toward the dominant eigendirection when one positive eigenvalue strictly dominates all others in modulus, and sign-split polarization toward the most negative eigendirection when $V$ is negative definite. In particular, we obtain local stability criteria for pure-mode equilibria and global selection results in both regimes. These results provide a rigorous finite-particle description of how the spectrum of the weight matrices organizes asymptotic patterns in a symmetric self-attention flow, and highlight how the symmetric setting renders the dynamics amenable to mathematical analysis.
    \end{abstract}
%============================================================================================================================
%============================================================================================================================
\section{Introduction}

Self-attention is one of the central mechanisms in modern Transformer architectures, and its repeated application induces a nonlinear collective evolution of token representations across depth. Since the introduction of the Transformer architecture in \cite{Vaswani2017}, this mechanism has primarily been studied from the viewpoint of machine learning and representation design~\cite{Lin2022}. This mathematical perspective on neural architectures has precedents: dynamical-systems viewpoints have long been used to study neural-network models \cite{Hopfield1982,Cohen1983,Sontag1998,E2017}, and residual networks have been interpreted as neural ODEs \cite{Chen2018,Haber2018,Ruthotto2020}, leading to further geometric and expressivity analyses of neural ODE architectures \cite{KK2023,KK2026}. More recently, this viewpoint has been extended to Transformer architectures \cite{GLPR2023,GLPR2025,CACP2025}: rather than treating attention only as an algorithmic component, one may regard it as generating a collective dynamics of interacting tokens and ask qualitative questions about the resulting evolution, such as clustering, alignment, collapse, polarization, and long-time pattern formation. This viewpoint is particularly natural when one is interested in the forward representation dynamics induced by repeated attention layers~\cite{GLPR2023,GLPR2025}.

A particularly useful setting arises when normalization is idealized by constraining token representations to evolve on the unit sphere. This spherical viewpoint is also motivated by a root-mean-square-type normalization, which regulates the scale of token representations and, in continuous-depth idealizations, naturally leads, after a suitable rescaling, to dynamics on a normalized state space \cite{GLPR2025,KGPR2025}. In this setting, the depth evolution of a stack of attention layers can be modeled as a self-attention interacting particle system on $\mathbb S^{d-1}$. The spherical formulation is attractive both mathematically and conceptually. Mathematically, it provides a geometrically structured state space and, in special regimes, admits variational or gradient-flow formulations. Conceptually, it captures representation dynamics that remain closely tied to clustering and collapse phenomena observed in deep attention models \cite{GLPR2025,KGPR2025}.

In this paper, we study a finite-particle self-attention dynamics on the unit sphere under a structural symmetry assumption. More precisely, starting from an idealized Transformer-type update in which the feed-forward layer is omitted and the normalization step is replaced by spherical normalization, one is formally led to the system
\begin{equation*}
\dot{x}_i
=
P_{x_i}^{\perp}
\left(
\frac{1}{Z_{\beta,i}}
\sum_{j=1}^n e^{\beta\langle Qx_i,Kx_j\rangle} Vx_j
\right),
\quad
Z_{\beta,i}:=\sum_{k=1}^n e^{\beta\langle Qx_i,Kx_k\rangle},
\quad i\in[n]:=\{1,\cdots,n\},
\end{equation*}
where $x_i=x_i(t)\in \mathbb S^{d-1}$ denotes the representation of the $i$-th token, $Q,K,V\in\mathbb R^{d\times d}$ are the query, key, and value matrices, respectively, $\beta>0$ is the inverse-temperature parameter, $Z_{\beta,i}$ is the softmax normalizing factor, and $P_{x_i}^{\perp}$ denotes the orthogonal projection onto the tangent space $T_{x_i}\mathbb S^{d-1}$. Thus, each token evolves under a projected attention-weighted average of the value vectors, and the resulting dynamics may be viewed as a self-attention interacting particle system on $\mathbb S^{d-1}$.

Our analysis is carried out under the symmetric assumption
\begin{align}\label{symm_assu}
Q^\top K = V = V^\top.
\end{align}
In this regime, the attention score is determined by the same symmetric matrix $V$ that also drives the value interaction, and the system acquires additional structure. In particular, the dynamics admits a gradient-flow formulation and becomes amenable to a spectral analysis in the eigenbasis of $V$. This makes the symmetric setting a natural testing ground for understanding how the interaction spectrum organizes the long-time behavior of the self-attention flow.

Recent mathematical studies of self-attention and Transformer-type dynamics have developed along several related directions. A first line of work introduced continuous-time and interacting-particle viewpoints for self-attention and showed that even simplified finite-particle models already exhibit nontrivial clustering behavior \cite{GLPR2023}. This perspective was subsequently broadened and systematized in \cite{GLPR2025}, where Transformer architectures are interpreted through interacting particle systems, continuum limits, and related dynamical frameworks. See also \cite{ChemnitzEngelKuehnKuntz} for a broader dynamical-systems perspective on neural-network architectures. In parallel, more general PDE and continuum descriptions have also been developed to place deep Transformer dynamics into a wider analytical setting \cite{CACP2025}.

A second line of work has emphasized the role of normalization and variational structure. In normalized or spherical regimes, mean-field formulations and Wasserstein-type perspectives make it possible to analyze attention dynamics through PDE and gradient-flow methods \cite{BKKRW2025,GLPR2025,Rigollet2025}. More recent studies have further shown that the long-time behavior is richer than a simple one-step collapse picture, revealing metastable clustering, multiscale evolution, and normalization-dependent effects \cite{GKPR2024,BPA2025_a,BPA2025_b,KGPR2025}. Taken together, these works show that self-attention dynamics already supports a substantial mathematical theory, and that structured settings such as normalized or symmetric regimes provide a particularly useful baseline for rigorous dynamical analysis.

Against this background, the present paper focuses on a different but closely related question: finite-particle spectral mode selection under the symmetric assumption \eqref{symm_assu}. More specifically, we ask:
\begin{center}
    Under the symmetric assumption \eqref{symm_assu}, which eigendirections of $V$ are selected by the dynamics, and in what geometric form does this selection appear?
\end{center}
A central message of the paper is that the sign structure of the spectrum leads to genuinely different selection mechanisms. More precisely, we distinguish two regimes in terms of the eigenvalues 
\(\{\lambda_i\}_{i=1}^d\) of \(V\): the \textit{positive-dominant regime} where $\lambda_1>\max_{k\ge2}|\lambda_k|$, and the \textit{negative-definite regime} where $\lambda_k<0$ for all $k\in[d]$. In the positive-dominant regime, the global selection mechanism is alignment-driven and relies on a one-sided cone structure. In contrast, in the negative-definite two-particle regime, the geometry is anti-alignment-driven and the dynamics is led toward sign-split configurations. Thus, even within the symmetric setting, the long-time behavior is governed not only by the dominance of an eigenvalue, but also by the way attention interacts with the geometry of the sphere and with the sign pattern of the spectrum.

The contribution of the paper is to make this spectral-selection mechanism explicit at the level of the full finite-particle system. First, we reformulate the symmetric self-attention dynamics in the eigenbasis of $V$ and derive a nonlocal replicator-type system for the modal variables, which makes the competition among eigendirections transparent. In fact, to determine a concrete link to replicator equations is itself already an interesting observation. Second, we identify two invariant manifolds on which the dynamics closes, namely the consensus manifold and a balanced bipolar manifold, and analyze the corresponding reduced systems. Third, motivated by these reduced dynamics, we study pure-mode equilibria of the full system and characterize their local stability, including the dependence of sign-split stability on the attention sharpness and the imbalance between the two sign groups. Finally, we establish global mode-selection results in two representative regimes: a positive-dominant regime, where a forward-invariant cone yields convergence to the leading positive eigendirection, and a two-particle negative-definite regime, where the dynamics becomes asymptotically sign-split and generically selects the eigendirection corresponding to the smallest eigenvalue.

The viewpoint of the paper is entirely finite-particle. Rather than passing first to a mean-field description, we exploit the exact geometric and spectral structure available in the symmetric system. In this sense, the paper identifies a class of self-attention dynamics for which the long-time behavior can be analyzed by combining a variational interpretation, a modal reformulation, invariant-manifold reductions, and stability arguments. More broadly, we hope this contributes to the emerging mathematical study of Transformer-inspired dynamics by showing that, in suitably structured regimes, questions originating from attention mechanisms can lead to precise problems in dynamical systems, collective behavior, and spectral and nonlinear analysis.

The rest of the paper is organized as follows. In \Cref{sec_prelim}, we collect the geometric and variational preliminaries for the symmetric self-attention dynamics. In \Cref{sec_modal}, we derive a modal reformulation in the eigenbasis of $V$ and show that the resulting dynamics has a replicator-type structure. In \Cref{sec_reduced}, we study reduced dynamics on invariant manifolds and identify the corresponding mode-selection mechanisms. In \Cref{sec_stab}, we analyze pure-mode equilibria and their local stability. In \Cref{sec_global}, we prove global mode-selection results in both positive-dominant and negative-definite regimes. \Cref{sec_numer} presents numerical examples that illustrate the analytical picture developed above and explore nearby regimes beyond the scope of the present theory. Finally, \Cref{sec_conclusion} summarizes the main conclusions and discusses several directions for future work. Several long calculations required for the  proofs are deferred to the appendices.

%============================================================================================================================
%============================================================================================================================
    \section{Preliminaries}\label{sec_prelim}
    In this section, we collect the background material used throughout the paper. In \Cref{subsec:ips_derivation}, we introduce the self-attention interacting particle system on the sphere that serves as the main finite-particle model studied in this work. In \Cref{subsec:present_results}, we describe the gradient-flow structure induced by the symmetric assumption and briefly summarize the dynamical results in the literature that are most relevant to the questions addressed in this paper.
%============================================================================================================================
    \subsection{From self-attention to an interacting particle system on the sphere}\label{subsec:ips_derivation}

    We briefly recall the continuous-time modeling viewpoint for self-attention dynamics on the sphere. 
    Our starting point is an idealized Transformer block in which the feed-forward layer is omitted and the normalization step is replaced by a normalization onto the unit sphere. 
    This simplified setting retains the self-attention mechanism while leading to a tractable finite-particle dynamical system.
    
    Consider an $n$ tokens representation
    \[
    x_1^{(\ell)},\cdots,x_n^{(\ell)} \in \mathbb{R}^d,
    \]
    at layer \(\ell=0,\cdots,L\). For each token \(x_i^{(\ell)}\), the self-attention weights assigned to the tokens \(x_j^{(\ell)}\) are defined by
    \[
    K_{ij}^{(\ell)}
    :=
    \frac{\exp\!\big(\beta \langle Qx_i^{(\ell)},Kx_j^{(\ell)}\rangle\big)}
    {\sum_{m=1}^n \exp\!\big(\beta \langle Qx_i^{(\ell)},Kx_m^{(\ell)}\rangle\big)},
    \quad i,j\in[n],
    \]
    where \(Q\), \(K\), and \(V\) denote the query, key, and value matrices, respectively, and \(\beta>0\) is an attention sharpness parameter. The corresponding attention output is
    \[
    \mathcal A_i\big(\mathbf{x}^{(\ell)}\big)
    :=
    \sum_{j=1}^n K_{ij}^{(\ell)}\,Vx_j^{(\ell)},
    \quad
    \mathbf{x}^{(\ell)}:=(x_1^{(\ell)},\cdots,x_n^{(\ell)}).
    \]
    Thus, each token interacts nonlinearly with all the others, with interaction weights determined by the query--key compatibilities.
    
    We next consider the sphere-normalized (or $L^2$-normalized) residual update
    \[
    x_i^{(\ell+1)}
    =
    \frac{x_i^{(\ell)}+h\,\mathcal A_i(\mathbf{x}^{(\ell)})}
    {\|x_i^{(\ell)}+h\,\mathcal A_i(\mathbf{x}^{(\ell)})\|},
    \quad\forall~i\in[n],
    \]
    where \(h>0\) is a step size.
    This normalization may be viewed as an idealized counterpart of root-mean-square (RMS) normalization \cite{Zhang2019,Touvron2023}:
    \[\text{RMSNorm}(x)=\frac{x\odot g}{\sqrt{\frac1d\|x\|^2+\varepsilon}},\]
    where \(g\in\mathbb R^d\) is a learnable gain parameter and \(\varepsilon>0\) is a numerical stabilizer. In the idealized case \(g\equiv\mathbf 1\) and \(\varepsilon=0\), this reduces to
    \[\text{RMSNorm}(x)=\sqrt d\,\frac{x}{\|x\|}.\]
    Ignoring the learnable scaling $g$ and the numeric stabilizer $\varepsilon$, RMS normalization differs from \(L^2\)-normalization only by a constant scaling factor $\sqrt{d}$. 
    This makes spherical normalization a natural and geometrically convenient simplification.
    
    Identifying the depth variable with the continuous time \(t=\ell h\) and letting \(h\to0\), one is formally led to the system
    \begin{align}\label{TF_ODE_recalled}
    \dot x_i
    =
    P_{x_i}^\perp\left(
    \frac{1}{Z_{\beta,i}}
    \sum_{j=1}^n e^{\beta\langle Qx_i,Kx_j\rangle}Vx_j
    \right),
    \quad
    Z_{\beta,i}
    =
    \sum_{k=1}^n e^{\beta\langle Qx_i,Kx_k\rangle},
    \end{align}
    for \(i\in[n]\), where
    \[
    P_x^\perp y := y-\langle x,y\rangle x
    \]
    denotes the orthogonal projection onto the tangent space \(T_x\mathbb S^{d-1}\) at point $x$ in the $(d-1)$-dimensional unit sphere $\mathbb S^{d-1}$.
    Accordingly, \eqref{TF_ODE_recalled} may be viewed as an interacting particle system induced by self-attention on the unit sphere. In practical implementations, the inner product in the attention score is often accompanied by the factor \(d^{-1/2}\). Throughout the paper, this factor is absorbed into the parameter \(\beta\). Regarding the structure of~\eqref{TF_ODE_recalled}, we point out that it can be viewed as a variant of Kuramoto-type interacting particle systems~\cite{Kuramoto,Strogatz1,PikovskyRosenblumKurths,Acebronetal}, just with a more complex interaction and normalization mechanism. In fact, it was pointed out in detail in~\cite{ChemnitzEngelKuehnKuntz} that extremely large classes of neural network architectures can be interpreted as Kuramoto-type network dynamical systems.
    
    In this paper, we study \eqref{TF_ODE_recalled} and further impose the structural assumption
    \[
    Q^\top K = V,
    \quad
    V^\top = V.
    \]
    Under this symmetry,
    \[
    \langle Qx_i,Kx_j\rangle
    =
    \langle x_i,Q^\top Kx_j\rangle
    =
    \langle x_i,Vx_j\rangle,
    \]
    and hence \eqref{TF_ODE_recalled} reduces to
    \begin{align}\label{TF_ODE_sym}
    \dot x_i
    =
    P_{x_i}^\perp\left(
    \frac{1}{Z_{\beta,i}}
    \sum_{j=1}^n e^{\beta\langle x_i,Vx_j\rangle}Vx_j
    \right),
    \quad
    Z_{\beta,i}
    =
    \sum_{k=1}^n e^{\beta\langle x_i,Vx_k\rangle}.
    \end{align}
    Therefore, in this symmetric setting, the dynamics is completely determined by the real symmetric interaction matrix \(V\).
%============================================================================================================================
    \subsection{Gradient-flow structure and related dynamical results}\label{subsec:present_results}
    We recall the weighted gradient formulation of \cite{GLPR2025}. Under the symmetric assumption
    \[
    Q^\top K = V = V^\top,
    \]
    the self-attention dynamics \eqref{TF_ODE_sym} can be written as a gradient system after introducing a state-dependent weighted metric. More precisely, for a configuration
    \[
    X=(x_1,\cdots,x_n)\in (\mathbb S^{d-1})^n,
    \]
    we define the interaction energy
    \[
    E_\beta(X)
    := \frac{1}{2\beta}\sum_{i=1}^n\sum_{j=1}^n e^{\beta\langle x_i,Vx_j\rangle},
    \]
    and equip the tangent space
    \[
    T_X(\mathbb S^{d-1})^n
    =
    T_{x_1}\mathbb S^{d-1}\times\cdots\times T_{x_n}\mathbb S^{d-1}
    \]
    with the weighted inner product
    \[
    \langle A,B\rangle_X^{(\beta)}
    :=
    \sum_{i=1}^n Z_{\beta,i}(X)\,\langle a_i,b_i\rangle,
    \quad
    A=(a_i)_{i=1}^n,\quad B=(b_i)_{i=1}^n,
    \]
    where
    \[
    Z_{\beta,i}(X):=\sum_{j=1}^n e^{\beta\langle x_i,Vx_j\rangle}.
    \]
    With this metric, the computation gives
    \[\operatorname{grad}_{\beta}E_\beta(X)_i=\frac{1}{Z_{\beta,i}(X)}P_{x_i^\perp}\left(\sum_{j=1}^n e^{\beta\langle x_i,Vx_j\rangle}Vx_j\right),\]
    where we denote by \(\operatorname{grad}_{\beta}\) the Riemannian gradient with respect to the weighted metric \(\langle\cdot,\cdot\rangle_X^{(\beta)}\). This coincides with the vector field in \eqref{TF_ODE_sym}, hence, the system \eqref{TF_ODE_sym} has a weighted gradient structure with respect to $\langle\cdot,\cdot\rangle_X^{(\beta)}$.
    
    For comparison, if one removes the normalization factor \(Z_{\beta,i}^{-1}\), one obtains a closely related unnormalized self-attention model. In the isotropic setting ($Q^\top K=V=I$), its continuum formulation is discussed in \cite{Rigollet2025} as a Wasserstein gradient flow of the same interaction energy. Thus, the normalized system \eqref{TF_ODE_sym} may be viewed as a state-dependent metric counterpart of a closely related unnormalized gradient-flow model.

We next recall several representative dynamical results for symmetric self-attention models, mostly in the isotropic setting, that are closely related to \eqref{TF_ODE_sym}. These results provide useful context for the present work, although their main focus is clustering, synchronization, and mean-field concentration, rather than finite-particle spectral selection. Technical assumptions are omitted for brevity and we refer the reader to the cited references for precise statements.

\begin{result}[Asymptotic clustering in self-attention dynamics {\cite[Thm. 4.2, 4.3, 5.1, 6.1, 6.3]{GLPR2025}}]
    Consider the dynamics \eqref{TF_ODE_sym} with arbitrary query and key matrices $Q,K$ and $V=I_d$. If the initial configuration lies in an open hemisphere, all tokens converge exponentially to a single cluster. For $d\ge n$ and uniformly sampled initializations, this condition holds almost surely. More generally, clustering holds for Lebesgue-almost every initial configuration across a range of regimes, including arbitrary $\beta>0$ when $d\ge3$, and extreme values of $\beta$ for arbitrary $d$ and $n$.
\end{result}

While clustering is guaranteed for almost every initial configuration when $V=I_d$, in sharper attention regimes ($\beta\gg1$), it may be preceded by long-lived metastable configurations.

\begin{result}[Dynamic metastability for separated configurations {\cite[Thm.~1.2]{GKPR2024}}]
For sufficiently sharp attention and initially separated configurations, the self-attention flow exhibits a metastable clustering regime. More precisely, particles initially belonging to the same spherical cap become exponentially close after a transient time and remain trapped near their initial caps for an exponentially long time interval.
\end{result}

Continuum and mean-field formulations provide another perspective on these phenomena. Most rigorous results in this direction concern mean-field attention dynamics in the isotropic setting \(Q=K=V=I_d\), where variational methods can be used to study concentration toward Dirac measures.

\begin{result}[Quantitative clustering for isotropic mean-field attention {\cite[Thm.~2.4]{CLPR2025}}]
Consider the isotropic mean-field attention dynamics on \(\mathbb S^{d-1}\). If the initial measure has nonzero mean and admits an \(L^2\)-density with respect to the uniform measure, then for sufficiently small \(\beta>0\) the solution converges exponentially, in \(W_2\)-distance after a transient time, toward a Dirac mass on the sphere.
\end{result}

Mean-field models also capture multi-cluster metastable regimes, linking continuum descriptions with the finite-particle metastability observed in attention dynamics.

\begin{result}[Clustering phase in the metastable unnormalized mean-field attention model {\cite[Thm.~4.5]{BPA2025_a}}]
In the metastable mean-field regime for the unnormalized ($Z_{\beta,i}\equiv 1$) attention model, the empirical particle distribution at the clustering time is approximated by the corresponding mean-field solution. Under the assumptions of the cited work, the limiting measure is close, in \(W_1\)-distance and in probability, to a finite sum of Dirac masses.
\end{result}

The results recalled above emphasize clustering, synchronization, metastability, and mean-field concentration mechanisms; for the detailed technical statements we refer to the respective papers. In contrast, the present paper remains at the finite-particle level and exploits the spectral structure of the symmetric matrix \(V\) to study how individual eigendirections are selected by the dynamics.
%============================================================================================================================
%============================================================================================================================
	\section{Symmetric self-attention dynamics and modal formulation}\label{sec_modal}
    In this section, we reformulate the symmetric self-attention dynamics \eqref{TF_ODE_recalled} in coordinates adapted to the spectral structure of the interaction matrix $V=Q^\top K$. This reduction is the natural starting point of the analysis, since in the symmetric setting ($V^\top=V$) the nonlinear attention dynamics \eqref{TF_ODE_sym} can be expressed in terms of modal interactions between eigendirections. The resulting formulation reveals a nonlocal competition mechanism among modes and provides the basic variables for the reduced dynamics and stability analysis developed later.

    Since \(V\) is real symmetric, one may diagonalize it by an orthogonal change of variables. Thus, if desired, the dynamics can be rewritten in a diagonal basis. However, in what follows we keep the formulation for a general symmetric matrix and only use the spectral decomposition of \(V\) to derive the modal equations.
	%============================================================================================================================

	\subsection{Coordinate dynamics}
    Let \((e_k)_{k=1}^d\) be an orthonormal eigenbasis of \(V\), with corresponding eigenvalues \((\lambda_k)_{k=1}^d\), so that
    \[
        V=\sum_{k=1}^d \lambda_k e_k e_k^\top.
    \]
    We write each trajectory \(x_i(t)\in\mathbb S^{d-1}\) in the form
    \[
        x_i(t)=\sum_{k=1}^d c_{i,k}(t)e_k,\quad\forall~i\in[n].
    \]
    Our goal is to derive a closed evolution equation for the modal coefficients \((c_{i,k})_{i,k}\), which makes the role of the spectrum of \(V\) explicit. In these coordinates, the attention weights become
    \[
    K_{ij}(C):=\frac{\exp\!\left(\beta\sum_{l=1}^d \lambda_l c_{i,l}c_{j,l}\right)}{\sum_{m=1}^n\exp\!\left(\beta\sum_{l=1}^d \lambda_l c_{i,l}c_{m,l}\right)},
    \]
    where \(C=(c_{i,k})\in\mathbb R^{n\times d}\). Using \eqref{TF_ODE_sym} and
    \[
    P_{x_i}^\perp y = y-\langle x_i,y\rangle x_i,
    \]
    a direct computation yields
    \begin{align}\label{C_ODE}
        \dot c_{i,k}=\lambda_k \sum_{j=1}^n K_{ij}(C)c_{j,k}-\phi_i(C)c_{i,k},\quad\forall~k\in[d],~~i\in[n],
    \end{align}
    where
    \[
        \phi_i(C):=\sum_{l=1}^d c_{i,l}\lambda_l    \left(\sum_{j=1}^n K_{ij}(C)c_{j,l}\right).
    \]
    In matrix form, introducing
	\[
	C=(c_{i,l})\in\mathbb R^{n\times d},
	\quad
	\Lambda=\mathrm{diag}(\lambda_1,\cdots,\lambda_d),
	\quad
	K=(K_{ij})\in\mathbb R^{n\times n},
	\]
	the system \eqref{C_ODE} can be written compactly as
	\[
	\dot C
	=
	KC\Lambda-\mathrm{Diag}(KC\Lambda C^\top)\,C,
	\]
	where $\text{Diag}(A)$ denotes the diagonal matrix consisting of the diagonal entries of $A$. This form makes clear that the symmetric self-attention dynamics \eqref{TF_ODE_sym} is a nonlocal replicator-type dynamics on the modal coefficients. For more detailed references on the theory of replicator equations we refer to~\cite{Hofbauer1998,Sandholm2010}.
	%============================================================================================================================

	\subsection{Token-wise modal masses}
    For each token \(i\in[n]\) and mode \(k\in[d]\), define the token-wise modal mass
    \[
        a_{i,k}:=c_{i,k}^2.
    \]
    Since \(\sum_{k=1}^d a_{i,k}=|x_i|^2=1\), the vector
    \[
        a_i=(a_{i,1},\cdots,a_{i,d})\in\Delta^{d-1}
    \]
    belongs to the standard unit simplex. In these variables, the modal dynamics takes a replicator-type form, which makes the competitive structure among active modes explicit.

    \begin{proposition}[Replicator-type dynamics for token-wise modal masses]
    For each \(i\in[n]\) and \(k\in[d]\) such that \(c_{i,k}\neq 0\), the system \eqref{C_ODE} is reduced to
    \[
    \dot a_{i,k}
    =
    2a_{i,k}\bigl(f_{i,k}-\overline f_i\bigr),
    \]
    where
    \[
    f_{i,k}
    :=
    \lambda_k
    \frac{\sum_{j=1}^n K_{ij}(C)c_{j,k}}{c_{i,k}},
    \quad
    \overline f_i
    :=
    \sum_{l=1}^d a_{i,l}f_{i,l}
    =
    \phi_i(C).
    \]
    Hence, for each fixed \(i\), the vector \(a_i\) evolves according to a replicator-type equation with state-dependent nonlocal fitness.
    \end{proposition}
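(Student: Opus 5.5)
The plan is a direct computation from the coordinate dynamics \eqref{C_ODE}, followed by an identification of the mean fitness with $\phi_i(C)$. Fix $i\in[n]$ and $k\in[d]$ with $c_{i,k}\neq0$; along the trajectory this holds on an open time interval, so $f_{i,k}$ is well defined and smooth there. Differentiating $a_{i,k}=c_{i,k}^2$ gives $\dot a_{i,k}=2c_{i,k}\dot c_{i,k}$. Substituting \eqref{C_ODE} yields
\[
\dot a_{i,k}=2c_{i,k}\lambda_k\sum_{j=1}^n K_{ij}(C)c_{j,k}-2\phi_i(C)c_{i,k}^2 .
\]
Since $c_{i,k}\neq0$, we may write $c_{i,k}\lambda_k\sum_j K_{ij}c_{j,k}=c_{i,k}^2\,f_{i,k}=a_{i,k}f_{i,k}$. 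This gives $\dot a_{i,k}=2a_{i,k}\bigl(f_{i,k}-\phi_i(C)\bigr)$.

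It remains to verify $\overline f_i=\sum_l a_{i,l}f_{i,l}=\phi_i(C)$. I will use the convention that for indices $l$ with $c_{i,l}=0$ the term $a_{i,l}f_{i,l}$ is interpreted as $c_{i,l}\lambda_l\sum_j K_{ij}c_{j,l}=0$. This product is the natural continuous extension, since $a_{i,l}f_{i,l}=c_{i,l}\lambda_l\sum_jK_{ij}c_{j,l}$ whenever $c_{i,l}\ne0$. With this convention,
\[
\sum_l a_{i,l}f_{i,l}=\sum_l c_{i,l}\lambda_l\sum_jK_{ij}(C)c_{j,l}=\phi_i(C),
\]
which is exactly the definition of $\phi_i$.

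Finally, I will note the replicator interpretation. For each fixed $i$, the vector $a_i$ lies in $\Delta^{d-1}$, because $\sum_k a_{i,k}=|x_i|^2=1$, and this sum is preserved: $\sum_k\dot a_{i,k}=2(\overline f_i-\overline f_i\sum_k a_{i,k})=0$. This is consistent with $x_i$ remaining on the sphere. The fitness $f_{i,k}$ depends on the other tokens through $K_{ij}(C)$ and $c_{j,k}$, which makes it state-dependent and nonlocal.

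The only genuinely delicate point is the handling of vanishing coordinates. The fitness $f_{i,k}$ is singular when $c_{i,k}=0$, so the mean fitness must be understood via the product $a_{i,l}f_{i,l}$ as above. The statement is restricted to $c_{i,k}\neq0$ precisely for this reason. Everything else is routine algebra.
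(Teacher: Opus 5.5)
Your proposal is correct and follows the same route as the paper: differentiate $a_{i,k}=c_{i,k}^2$, substitute \eqref{C_ODE}, and use $c_{i,k}\neq0$ to rewrite $c_{i,k}\lambda_k\sum_j K_{ij}(C)c_{j,k}$ as $a_{i,k}f_{i,k}$. Your explicit verification that $\sum_l a_{i,l}f_{i,l}=\phi_i(C)$, using the convention $a_{i,l}f_{i,l}:=c_{i,l}\lambda_l\sum_j K_{ij}(C)c_{j,l}$ when $c_{i,l}=0$, fills in a detail that the paper leaves implicit.
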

    \begin{proof}
    Differentiating \(a_{i,k}=c_{i,k}^2\) and using \eqref{C_ODE}, we obtain
    \[
    \dot a_{i,k}=2c_{i,k}\dot c_{i,k}
    =
    2c_{i,k}\left(
    \lambda_k \sum_{j=1}^n K_{ij}(C)c_{j,k}
    -
    \phi_i(C)c_{i,k}
    \right),
    \]
    which is equivalent to the desired equation.
    \end{proof}
    
	Being given the variables $(c_{i,k})_{i,k}$, a natural averaged quantity (or observable, or order parameter) is
	\[
	m_k(t):=\frac1n\sum_{i=1}^n c_{i,k}^2(t)=\frac1n\sum_{i=1}^n a_{i,k}(t).
	\]
	Although \(m_k\) measures the total mass carried by mode \(e_k\), its evolution is not closed.
	
	\begin{remark}[Lack of closure for the averaged masses]
		In general,
		\[
		\dot m_k
		=
		\frac{2}{n}\sum_{i=1}^n a_{i,k}(f_{i,k}-\overline f_i),
		\]
		and the right-hand side depends on the full configuration \(C=(c_{i,l})\) through the nonlinear attention weights \(K_{ij}(C)\). Therefore, unlike in classical finite-dimensional replicator systems, the dynamics of \((m_k)_k\) cannot be reduced to a closed ODE involving only the averaged masses.
	\end{remark}

    Since the averaged masses do not satisfy a closed evolution equation in general, the modal reformulation alone does not yet determine the asymptotic selection mechanism of the full system. This motivates the two complementary steps pursued in the remainder of the paper. We first isolate invariant manifolds on which the dynamics closes and yields explicit reduced equations. We then return to the full system and analyze pure-mode equilibria together with their stability, which will provide the basis for the global mode-selection results established later.
%============================================================================================================================
%============================================================================================================================

    \section{Reduced dynamics on invariant manifolds}\label{sec_reduced}

    Although the full system \eqref{TF_ODE_sym} is nonlocal and high-dimensional, its mode-selection mechanisms \eqref{C_ODE} become much more transparent on certain invariant manifolds. In this section, we focus on two natural configurations: the \textit{consensus manifold}, which describes homogeneous alignment, and a \textit{balanced bipolar manifold}, which describes polarized states with two opposite orientations. On each of these manifolds, the modal dynamics closes and yields a tractable reduced system. These reduced models provide the first indication of how the spectrum of \(V\) influences asymptotic mode selection.
    %============================================================================================================================

    \subsection{Consensus manifold}\label{subsec:css_mfd}
    
    We begin with the fully aligned regime
    \[
    x_i(t)=x(t),\quad\forall~i\in[n],
    \]
    in which all tokens evolve identically on the sphere. This manifold
    \[\mathcal M_{\mathrm{css}}:=\big\{X\in(\mathbb S^{d-1})^n~:~x_i=x_j,\quad\forall~i,j\in[n]\big\}\]
    is invariant under the flow, and the dynamics reduces to a single trajectory in \(\mathbb S^{d-1}\). When expressed in the eigenbasis of \(V\), the corresponding modal masses satisfy a closed replicator equation, which makes the mode-selection mechanism explicit. Writing
    \[
    x(t)=\sum_{k=1}^d c_k(t)e_k,
    \]
    we obtain the reduced system
    \begin{align}\label{one_part_ODE}
    \dot c_k
    =
    c_k\left(\lambda_k-\sum_{l=1}^d \lambda_l c_l^2\right),
    \quad k\in[d].
    \end{align}
    Introducing
    \[
    p_k:=c_k^2,
    \quad \sum_{k=1}^d p_k=1,
    \]
    we deduce that
    \begin{align}\label{one_part_p_ODE}
    \dot p_k
    =
    2p_k\left(\lambda_k-\sum_{l=1}^d \lambda_l p_l\right),
    \quad\forall~k\in[d],
    \end{align}
    which is a replicator equation.
    
    \begin{proposition}[Explicit solution on the consensus manifold $\mathcal M_{\mathrm{css}}$]
    Let \(x=(c_k)\) solve \eqref{one_part_ODE}, and define \(p_k=c_k^2\). Then
    \begin{align}\label{pk_formula}
        p_k(t)=\frac{p_k(0)e^{2\lambda_k t}}{\sum_{l=1}^d p_l(0)e^{2\lambda_l t}},\quad\forall~k\in[d].
    \end{align}
    Equivalently,
    \begin{align}\label{consensus_sol}
        c_k(t)=\operatorname{sgn}(c_k(0))\left(\frac{c_k(0)^2 e^{2\lambda_k t}}{\sum_{l=1}^d c_l(0)^2 e^{2\lambda_l t}}\right)^{1/2},\quad\forall~k\in[d].
    \end{align}
    \end{proposition}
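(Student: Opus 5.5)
The proof is a linearization of the replicator equation \eqref{one_part_p_ODE} by unnormalized weights. Set
\[
q_k(t):=p_k(0)e^{2\lambda_k t},\qquad S(t):=\sum_{l=1}^d q_l(t),\qquad \tilde p_k:=q_k/S.
\]
Since $\sum_l p_l(0)=1$, we have $S(0)=1$, so $\tilde p_k(0)=p_k(0)$. Also $S>0$ for all $t$, because the $p_l(0)$ are nonnegative and at least one is positive.

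First, we check that $\tilde p$ solves \eqref{one_part_p_ODE}. Differentiating,
\[
\dot{\tilde p}_k=\frac{2\lambda_k q_k}{S}-\frac{q_k}{S}\cdot\frac{\sum_l 2\lambda_l q_l}{S}=2\tilde p_k\Bigl(\lambda_k-\sum_l\lambda_l\tilde p_l\Bigr).
\]
The right-hand side of \eqref{one_part_p_ODE} is polynomial, hence locally Lipschitz. Picard--Lindel\"of uniqueness therefore gives $p_k=\tilde p_k$ for all $t$. Both solutions are global: $\tilde p$ is explicit, and $p$ stays in the compact simplex, on which the invariance $\sum_k p_k=1$ follows from summing \eqref{one_part_p_ODE}. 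This proves \eqref{pk_formula}.

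Next, we recover the formula for $c_k$. Equation \eqref{one_part_ODE} has the form $\dot c_k=c_k\,g(t)$ with $g$ continuous along the solution. Hence
\[
c_k(t)=c_k(0)\exp\Bigl(\int_0^t g\Bigr).
\]
In particular $c_k$ never changes sign and vanishes identically if $c_k(0)=0$. So $c_k(t)=\operatorname{sgn}(c_k(0))\sqrt{p_k(t)}$, and substituting \eqref{pk_formula} yields \eqref{consensus_sol}. Alternatively, one can verify \eqref{consensus_sol} directly: it gives $c_k=\pm\sqrt{p_k}$ with $\dot c_k=\dot p_k/(2c_k)=c_k(\lambda_k-\sum\lambda_l p_l)$ wherever $c_k\neq0$, and this agrees with uniqueness for \eqref{one_part_ODE}.

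The only delicate point is this sign and zero bookkeeping. The square-root relation $c_k=\pm\sqrt{p_k}$ alone does not fix the branch, and the exponential representation of $c_k$ is what resolves it: it shows the sign is preserved and that zero coordinates stay zero. Everything else is a direct computation.
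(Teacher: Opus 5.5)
Your proof is correct, but it reaches \eqref{pk_formula} by a different route than the paper. The paper derives the formula. For $k,m$ in the initial support it integrates $\frac{d}{dt}\log(p_k/p_m)=2(\lambda_k-\lambda_m)$ to get $p_k(t)=p_m(t)\frac{p_k(0)}{p_m(0)}e^{2(\lambda_k-\lambda_m)t}$. It then solves for $p_m(t)$ from $\sum_k p_k(t)=1$, and treats indices outside the support separately. You instead normalize the solution of the decoupled linear system $\dot q_k=2\lambda_k q_k$, verify that it satisfies \eqref{one_part_p_ODE}, and conclude by Picard--Lindel\"of uniqueness. Your route handles zero and nonzero initial masses uniformly and never divides by $p_m(t)$. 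The paper's use of $\log p_m$ tacitly requires that positive masses stay positive for all $t$; this is true, by the same argument you give for $c_k$, but the paper does not state it. The paper's route, in turn, is constructive. Its ratio identity is also exactly what carries over to the balanced bipolar manifold, where the exponent becomes $2(\lambda_k-\lambda_\ell)\int_0^t\alpha(M(s))\,ds$ with $M$ depending on the unknown solution, so there is no closed-form candidate to verify. The sign argument for $c_k$ is the same in both proofs; your representation $c_k(t)=c_k(0)\exp(\int_0^t g)$ just makes explicit what the paper asserts in one line. Applying your verification idea directly to $c$, you could check that $c(t)=e^{t\Lambda}c(0)/\|e^{t\Lambda}c(0)\|$ solves \eqref{one_part_ODE}. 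That would give \eqref{consensus_sol}, signs and zeros included, in a single step.
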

    \begin{proof}
    If \(p_k(0)=0\), then \(p_k(t)\equiv 0\) for all \(t\ge0\). Thus it suffices to consider indices on the initial support. For \(k,m\) such that \(p_k(0),p_m(0)>0\), equation \eqref{one_part_p_ODE} yields
    \begin{align}\label{pkm_log}
        \frac{d}{dt}\log\frac{p_k(t)}{p_m(t)}
        =
        \frac{\dot p_k}{p_k}-\frac{\dot p_m}{p_m}
        =
        2(\lambda_k-\lambda_m).
    \end{align}
    Integrating \eqref{pkm_log}, we obtain
    \[
    p_k(t)
    =
    p_m(t)\frac{p_k(0)}{p_m(0)}e^{2(\lambda_k-\lambda_m)t}.
    \]
    Summing over \(k\) and using \(\sum_{k=1}^d p_k(t)=1\), we find
    \[
    1
    =
    p_m(t)\sum_{k=1}^d \frac{p_k(0)}{p_m(0)}e^{2(\lambda_k-\lambda_m)t},
    \]
    which implies the desired formula \eqref{pk_formula}. Next, since \eqref{one_part_ODE} has the form
    \[
    \dot c_k
    =
    c_k\left(\lambda_k-\sum_{l=1}^d \lambda_l c_l^2\right),
    \]
    the sign of each \(c_k(t)\) is preserved along the flow. Recalling that \(p_k=c_k^2\), we conclude that
    \[
    c_k(t)
    =
    \operatorname{sgn}(c_k(0))
    \left(
    \frac{c_k(0)^2 e^{2\lambda_k t}}{\sum_{l=1}^d c_l(0)^2 e^{2\lambda_l t}}
    \right)^{1/2},
    \quad\forall~ k\in[d].
    \]
    This proves \eqref{consensus_sol} and ends the proof.
    \end{proof}
    
 Using the explicit formula \eqref{consensus_sol} we shall show next that, in the consensus
    regime, only the largest eigenvalue on the initial support survives
    asymptotically.
    
    \begin{corollary}[Mode selection in the consensus regime]
    Let
    \[
    I_0:=\{k\in[d]: p_k(0)>0\},
    \quad
    \lambda_*:=\max_{k\in I_0}\lambda_k,
    \quad
    I_*:=\{k\in I_0:\lambda_k=\lambda_*\}.
    \]
    Then one gets
    \begin{align*}
        \lim_{t\to\infty}p_k(t)=
        \begin{dcases}
            0,\quad&\mbox{if}~~k\notin I_*,\\
            \frac{p_k(0)}{\sum_{j\in I_*}p_j(0)},&\mbox{if}~~k\in I_*.
        \end{dcases}
    \end{align*}
    \end{corollary}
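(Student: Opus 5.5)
We take the explicit formula \eqref{pk_formula} from the preceding proposition as the starting point and simply pass to the limit $t\to\infty$. First, note that if $k\notin I_0$ then $p_k(0)=0$, so $p_k(t)\equiv 0$ and the claim is trivial. The denominator in \eqref{pk_formula} only involves indices in $I_0$, since the other terms vanish. Also, $I_0\neq\emptyset$ because $\sum_k p_k(0)=1$, so $\lambda_*$ and $I_*$ are well defined.

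The key step is to multiply numerator and denominator of \eqref{pk_formula} by $e^{-2\lambda_* t}$. This gives, for every $k\in[d]$,
\[
p_k(t)=\frac{p_k(0)e^{2(\lambda_k-\lambda_*)t}}{\sum_{j\in I_*}p_j(0)+\sum_{l\in I_0\setminus I_*}p_l(0)e^{2(\lambda_l-\lambda_*)t}}.
\]
For $l\in I_0\setminus I_*$ we have $\lambda_l<\lambda_*$, so each exponential term tends to $0$. The denominator therefore converges to $\sum_{j\in I_*}p_j(0)$, which is strictly positive because $I_*\subset I_0$ is nonempty.

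It remains to treat the numerators. If $k\in I_*$, then $\lambda_k=\lambda_*$ and the numerator is constantly $p_k(0)$, which yields the stated limit $p_k(0)/\sum_{j\in I_*}p_j(0)$. If $k\in I_0\setminus I_*$, the numerator tends to $0$, so $p_k(t)\to0$. Together with the trivial case $k\notin I_0$, this covers all $k\notin I_*$.

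There is no real obstacle. The only point needing care is checking that the limiting denominator is positive, which follows from $I_*\subset I_0$ being nonempty and $p_j(0)>0$ for $j\in I_0$.
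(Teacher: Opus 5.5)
Your proof is correct and follows essentially the same route as the paper: rescale the explicit formula \eqref{pk_formula} by $e^{-2\lambda_* t}$ and let $t\to\infty$. For $k\notin I_*$ the paper instead uses the equivalent bound $p_k(t)\le \frac{p_k(0)}{\sum_{j\in I_*}p_j(0)}e^{-2(\lambda_*-\lambda_k)t}$, and asserts $\lambda_k<\lambda_*$ for all such $k$; that inequality actually holds only on $I_0\setminus I_*$, so your separate treatment of the trivial case $p_k(0)=0$ is slightly more careful than the paper's.
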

    
    \begin{proof}
    If \(k\notin I_*\), then \(\lambda_k<\lambda_*\). By
    \eqref{pk_formula},
    \[
    p_k(t)
    =
    \frac{p_k(0)e^{2\lambda_k t}}
    {\sum_{l=1}^d p_l(0)e^{2\lambda_l t}}
    \le
    \frac{p_k(0)e^{2\lambda_k t}}
    {\sum_{j\in I_*} p_j(0)e^{2\lambda_* t}}
    =
    \frac{p_k(0)}{\sum_{j\in I_*} p_j(0)}
    e^{-2(\lambda_*-\lambda_k)t},
    \]
    hence \(p_k(t)\to0\). Now let \(k\in I_*\). Then, one has
    \begin{align*}
        p_k(t)&=\frac{p_k(0)e^{2\lambda_* t}}{\sum_{j\in I_*} p_j(0)e^{2\lambda_* t}+\sum_{l\notin I_*} p_l(0)e^{2\lambda_l t}}\\
        &=\frac{p_k(0)}{\sum_{j\in I_*} p_j(0)+\sum_{l\notin I_*} p_l(0)e^{-2(\lambda_*-\lambda_l)t}}
    \end{align*}
    Since \(\lambda_l<\lambda_*\) for \(l\notin I_*\), the second sum in the denominator tends to
    \(0\), and therefore
    \[
    p_k(t)\to
    \frac{p_k(0)}{\sum_{j\in I_*} p_j(0)}
    \]
    as $t\rightarrow \infty$, which finishes the proof.
    \end{proof}
    %============================================================================================================================

    \subsection{Balanced bipolar manifold}\label{subsec:bipolar_mfd}
    
    We next consider polarized configurations of the form
    \begin{align*}
    x_i(t)=s_i u(t), \quad s_i\in\{-1,1\},
    \end{align*}
    where the signs \(s_i\) encode a splitting of the population into two
    opposite groups. In general, this ansatz is not invariant. However, under a
    natural balance condition, the two groups contribute symmetrically to the
    softmax normalization, and the dynamics again reduces to a closed equation for
    a single profile \(u(t)\in\mathbb S^{d-1}\). This provides a second reduced
    regime, distinct from consensus, in which polarization rather than homogeneous
    alignment becomes the relevant organizing structure. Let
    \begin{align}\label{Sn_def}
        S_+:=\{i:s_i=1\},\quad S_-:=\{i:s_i=-1\},\quad n_+:=|S_+|,\quad n_-:=|S_-|.
    \end{align}
    If \(n_+=n_-\), then the two polarized groups carry equal weight in the
    softmax normalization, and the reduced dynamics closes. For this, we define the balanced bipolar manifold
    \[\mathcal M_{\mathrm{bbp}}:=\big\{X\in(\mathbb S^{d-1})^n~:~\exists~u\in\mathbb S^{d-1}~\mbox{such that}~x_i=s_iu,~\forall~i\in[n]~\mbox{with}~|S_+|=|S_-|\big\}.\]
    
    \begin{proposition}[Invariance of the balanced bipolar manifold $\mathcal M_{\mathrm{bbp}}$]
    The balanced bipolar manifold $\mathcal M_{\mathrm{bbp}}$ is invariant under the flow of \eqref{TF_ODE_sym}, i.e., there exists $u(t)\in\mathbb S^{d-1}$ such that
    \[
    x_i(t)=s_i u(t),\quad \forall\, t\ge0,\ i\in[n].
    \]
    Moreover, denoting
    \[
    u(t)=\sum_{k=1}^d u_k(t)e_k,
    \quad
    M(t):=\sum_{l=1}^d \lambda_l u_l^2(t),
    \]
    the coefficients satisfy
    \begin{align*}
        \dot u_k=u_k\,\alpha(M)\,(\lambda_k-M),\quad\alpha(M)=\tanh(\beta M),\quad\forall~t\ge0.
    \end{align*}
    \end{proposition}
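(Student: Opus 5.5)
Our approach is an ansatz-plus-uniqueness argument. We posit that $x_i(t)=s_iu(t)$, where $u$ solves a closed ODE on $\mathbb S^{d-1}$. We then check that this configuration satisfies \eqref{TF_ODE_sym}. Uniqueness of solutions to \eqref{TF_ODE_sym}, whose vector field is smooth (indeed real-analytic) on $(\mathbb S^{d-1})^n$, then forces the true trajectory starting from a point of $\mathcal M_{\mathrm{bbp}}$ to coincide with the ansatz for all $t\ge0$. Global existence is automatic by compactness of the sphere.

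The key step is computing the right-hand side of \eqref{TF_ODE_sym} on the ansatz. With $x_j=s_ju$ we get $\langle x_i,Vx_j\rangle=s_is_jM$, where $M=\langle u,Vu\rangle$. For fixed $i$, the $n/2$ indices $j$ with $s_j=s_i$ give weight $e^{\beta M}$, and the $n/2$ indices with $s_j=-s_i$ give weight $e^{-\beta M}$. Here the balance condition $n_+=n_-=n/2$ is used, and it is exactly what makes $Z_{\beta,i}$ independent of $i$:
\[
Z_{\beta,i}=\tfrac n2\bigl(e^{\beta M}+e^{-\beta M}\bigr).
\]
The attention output is then
\[
\frac{1}{Z_{\beta,i}}\sum_j e^{\beta s_is_jM}s_jVu=\frac{\tfrac n2\bigl(e^{\beta M}-e^{-\beta M}\bigr)}{\tfrac n2\bigl(e^{\beta M}+e^{-\beta M}\bigr)}\,s_iVu=s_i\tanh(\beta M)\,Vu.
\]
We use $s_j=s_i$ in the first group and $s_j=-s_i$ in the second. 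Since $P^\perp_{s_iu}=P^\perp_u$, we obtain
\[
\dot x_i=s_i\tanh(\beta M)\,P_u^\perp Vu.
\]
This is consistent with $\dot x_i=s_i\dot u$ provided $u$ solves
\[
\dot u=\tanh(\beta M)\bigl(Vu-\langle u,Vu\rangle u\bigr).
\]

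We therefore define $u(t)$ as the solution of this reduced ODE with $u(0)$ given by the initial configuration. This vector field is smooth and tangent to the sphere, so $u$ stays on $\mathbb S^{d-1}$ and exists for all time. The configuration $X(t)=(s_iu(t))_i$ then solves \eqref{TF_ODE_sym}, and by uniqueness it is the solution, which proves invariance. Writing $u=\sum_k u_ke_k$ and using $Vu=\sum_k\lambda_ku_ke_k$ gives the componentwise form
\[
\dot u_k=\tanh(\beta M)\,u_k(\lambda_k-M),
\]
as claimed.

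No step is a serious obstacle. The only point requiring care is the softmax bookkeeping, namely recognizing that the balance $n_+=n_-$ is needed both to make $Z_{\beta,i}$ the same for the two groups and to produce the $\tanh$ factor with the correct sign $s_i$. Without balance the two groups would move with different speeds $\alpha_\pm(M)$, and the ansatz would fail.
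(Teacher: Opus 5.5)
Your proof is correct and follows essentially the same route as the paper. You substitute the ansatz $x_i=s_iu$, use $\langle x_i,Vx_j\rangle=s_is_jM$ and the balance $n_+=n_-$ to collapse the softmax average to $s_i\tanh(\beta M)$, and read off the reduced equation. The paper does this in the modal coordinates of \eqref{C_ODE} rather than coordinate-free. It also concludes invariance directly from the ansatz being consistent with the flow, whereas you spell out the uniqueness argument that this step implicitly relies on.
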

    
    \begin{proof}
    Assume that \(x_i=s_i u\). Then, in the eigenbasis \(\{e_k\}_{k=1}^d\), the modal coefficients are given by
    \begin{align}\label{csu}
        c_{i,k}=s_i u_k,\quad\forall~i\in[n],\ k\in[d],
    \end{align}
    which leads to
    \[
    \sum_{l=1}^d \lambda_l c_{i,l}c_{m,l}
    =
    s_i s_m \sum_{l=1}^d \lambda_l u_l^2
    =
    s_i s_m M.
    \]
    It follows that
    \begin{align}\label{KC_eq}
    \begin{aligned}
        K_{im}(C)=
        \begin{dcases}
            \frac{e^{\beta s_m M}}{n_+e^{\beta M}+n_-e^{-\beta M}},\quad&\mbox{if}~~i\in S_+,\\
            \frac{e^{-\beta s_m M}}{n_+e^{-\beta M}+n_-e^{\beta M}}&\mbox{if}~~i\in S_-.
        \end{dcases}
    \end{aligned}
    \end{align}
    Now recall the modal equation \eqref{C_ODE}
    \[
    \dot c_{i,k}
    =
    \lambda_k\sum_{m=1}^n K_{im}(C)c_{m,k}
    -
    \left(\sum_{l=1}^d c_{i,l}\lambda_l\sum_{m=1}^nK_{im}(C)c_{m,l}\right)c_{i,k}.
    \]
    Substituting \eqref{csu} and \eqref{KC_eq}, we obtain
    \begin{align}\label{eq:balanced_cik}
    \dot c_{i,k}
    =
    u_k\left(\sum_{m=1}^nK_{im}(C)s_m\right)
    \left(\lambda_k-\sum_{l=1}^d\lambda_lu_l^2\right)
    =
    u_k\left(\sum_{m=1}^nK_{im}(C)s_m\right)(\lambda_k-M).
    \end{align}
    Thus the ansatz is invariant provided the factor \(\sum_{m=1}^nK_{im}(C)s_m\) is of the form \(s_i\) times a quantity independent of \(i\). A direct computation gives
    \begin{align*}
        \begin{aligned}
            \sum_{m=1}^nK_{im}(C)s_m=
            \begin{dcases}
                \frac{n_+e^{\beta M}-n_-e^{-\beta M}}{n_+e^{\beta M}+n_-e^{-\beta M}},\quad&\mbox{if}~~i\in S_+,\\
                \frac{n_+e^{-\beta M}-n_-e^{\beta M}}{n_+e^{-\beta M}+n_-e^{\beta M}},&\mbox{if}~~i\in S_-.
            \end{dcases}
        \end{aligned}
    \end{align*}
    Since \(n_+=n_-\) by assumption, this reduces to
    \[
    \sum_{m=1}^nK_{im}(C)s_m
    =
    \begin{cases}
    \tanh(\beta M), \quad&\mbox{if}~~ i\in S_+,\\[1mm]
    -\tanh(\beta M), &\mbox{if}~~ i\in S_-,
    \end{cases}
    \]
    or equivalently,
    \[
    \sum_{m=1}^nK_{im}(C)s_m=s_i\tanh(\beta M).
    \]
    Therefore \eqref{eq:balanced_cik} becomes
    \[
    \dot c_{i,k}
    =
    s_i u_k\tanh(\beta M)(\lambda_k-M).
    \]
    Using \(c_{i,k}=s_i u_k\), we conclude that
    \[
    \dot u_k=u_k\tanh(\beta M)(\lambda_k-M).
    \]
    This proves the claimed invariance and the reduced evolution equation.
    \end{proof}
    
    As in \Cref{subsec:css_mfd}, passing to the squared masses
    \[
    p_k:=u_k^2,
    \quad
    \sum_{k=1}^d p_k=1,
    \]
    we obtain the replicator-type equation
    \begin{align}\label{bipolar_p_ODE}
    \dot p_k
    =
    2p_k\,\alpha(M)\,(\lambda_k-M),
    \quad
    M=\sum_{l=1}^d \lambda_l p_l.
    \end{align}
    
    The long-time behavior of \eqref{bipolar_p_ODE} is governed by the weighted average $M$. The next lemma shows that \(M\)
    is monotone along the flow, and in particular that its sign is preserved.
    
    \begin{lemma}[Monotonicity of the weighted average $M$]\label{lem:M_monotone}
    Let \(p=(p_1,\cdots,p_d)\) solve \eqref{bipolar_p_ODE}. Then
    \[
    \dot M
    =
    2\alpha(M)\sum_{k=1}^d p_k(\lambda_k-M)^2.
    \]
    In particular, since \(\alpha(M)=\tanh(\beta M)\), we have
    \[
    M>0 \implies \dot M\ge0,
    \quad
    M<0 \implies \dot M\le0,
    \quad
    M=0 \implies \dot M=0.
    \]
    Hence the sign of \(M(t)\) is preserved along the flow.
    \end{lemma}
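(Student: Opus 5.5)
The plan is to differentiate $M=\sum_{k}\lambda_k p_k$ along \eqref{bipolar_p_ODE}, rewrite the result as a variance, and then read off the sign from $\tanh$. First I will compute
\[
\dot M=\sum_{k=1}^d \lambda_k \dot p_k = 2\alpha(M)\sum_{k=1}^d \lambda_k p_k(\lambda_k-M).
\]
Since $\sum_k p_k=1$ and $M=\sum_k\lambda_kp_k$, we have $\sum_k p_k(\lambda_k-M)=0$. Subtracting $M$ times this zero quantity inside the sum gives
\[
\sum_k \lambda_k p_k(\lambda_k-M)=\sum_k p_k(\lambda_k-M)^2.
\]
This yields the stated identity. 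I will also check that $\sum_k p_k\equiv1$ is indeed preserved, because the identity uses it. Summing \eqref{bipolar_p_ODE} over $k$ gives $\frac{d}{dt}\sum_k p_k = 2\alpha(M)\bigl(M\sum_kp_k - M\bigr)$, so the value $1$ is preserved. Alternatively, this follows directly from $|u|=1$.

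The sign statements come next. The sum $\sum_k p_k(\lambda_k-M)^2$ is nonnegative because $p_k\ge0$. Since $\beta>0$, $\tanh(\beta M)$ has the same sign as $M$ and vanishes exactly at $M=0$. This gives the three implications immediately.

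The only step needing care is the final claim that the sign of $M(t)$ is preserved, since a monotonicity statement alone does not stop a trajectory from reaching $0$. I will argue via uniqueness. Write $\dot M = g(t)M$, where
\[
g(t):=2\frac{\tanh(\beta M)}{M}\sum_k p_k(\lambda_k-M)^2
\]
(and $g:=2\beta\sum_kp_k(\lambda_k-M)^2$ at $M=0$). The function $g$ is continuous and bounded along the trajectory because $\tanh(\beta M)/M$ extends smoothly. Hence $M(t)=M(0)\exp\bigl(\int_0^t g\bigr)$, and $M(t)$ has the sign of $M(0)$ for all $t$; if $M(0)=0$, then $M\equiv0$. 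As a cruder route, one can also note that $M>0$ forces $M$ to be nondecreasing, so it can never drop to $0$, and symmetrically for $M<0$. The case $M(0)=0$ is exactly where the factorized form is the clean way to rule out leaving $0$.
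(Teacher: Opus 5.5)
Your proof is correct and follows the paper's argument: differentiate $M$, subtract the vanishing quantity $M\sum_k p_k(\lambda_k-M)$ to obtain the variance form, and read off the signs from $\tanh(\beta M)$. Your factorization $M(t)=M(0)\exp\bigl(\int_0^t g(s)\,ds\bigr)$ is a useful addition, since the paper treats sign preservation (in particular $M\equiv0$ when $M(0)=0$) as immediate. One harmless slip: summing the equation gives $\frac{d}{dt}\sum_k p_k=2\alpha(M)M\bigl(1-\sum_k p_k\bigr)$, the negative of what you wrote, but either form preserves $\sum_k p_k\equiv1$.
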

    
    \begin{proof}
    Using \eqref{bipolar_p_ODE}, we compute
    \[
    \dot M
    =
    \sum_{k=1}^d \lambda_k \dot p_k
    =
    2\alpha(M)\sum_{k=1}^d p_k\lambda_k(\lambda_k-M).
    \]
    Since
    \[
    \sum_{k=1}^d p_k(\lambda_k-M)
    =
    M-M\sum_{k=1}^d p_k
    =
    0,
    \]
    it follows that
    \begin{align*}
    \sum_{k=1}^d p_k\lambda_k(\lambda_k-M)
    &=
    \sum_{k=1}^d p_k\bigl((\lambda_k-M)+M\bigr)(\lambda_k-M)\\
    &=
    \sum_{k=1}^d p_k(\lambda_k-M)^2
    +
    M\sum_{k=1}^d p_k(\lambda_k-M)\\
    &=
    \sum_{k=1}^d p_k(\lambda_k-M)^2.
    \end{align*}
    Therefore,
    \[
    \dot M
    =
    2\alpha(M)\sum_{k=1}^d p_k(\lambda_k-M)^2.
    \]
    The sign conclusions follow immediately from
    \(\alpha(M)=\tanh(\beta M)\).
    \end{proof}
    
    This monotonicity determines the asymptotic mode-selection mechanism on the balanced bipolar manifold.
    
    \begin{proposition}[Asymptotic behavior of the reduced bipolar dynamics]\label{prop_asymp_bi_polar}
    Let \(p=(p_1,\cdots,p_d)\) solve \eqref{bipolar_p_ODE}, and define
    \[
    I_0:=\{k\in[d]:p_k(0)>0\},
    \quad
    \lambda_+:=\max_{k\in I_0}\lambda_k,
    \quad
    \lambda_-:=\min_{k\in I_0}\lambda_k.
    \]
    Set
    \[
    I_+:=\{k\in I_0:\lambda_k=\lambda_+\},
    \quad
    I_-:=\{k\in I_0:\lambda_k=\lambda_-\}.
    \]
    Then the following assertions hold:
    \begin{enumerate}
    \item If \(M(0)>0\), then
    \[
    p_k(t)\to0 \quad \text{for } k\notin I_+,
    \quad
    p_k(t)\to \frac{p_k(0)}{\sum_{j\in I_+}p_j(0)} \quad \text{for } k\in I_+,
    \]
    and, in particular,
    \[
    M(t)\to \lambda_+.
    \]
    
    \item If \(M(0)<0\), then
    \[
    p_k(t)\to0 \quad \text{for } k\notin I_-,
    \quad
    p_k(t)\to \frac{p_k(0)}{\sum_{j\in I_-}p_j(0)} \quad \text{for } k\in I_-,
    \]
    and, in particular,
    \[
    M(t)\to \lambda_-.
    \]
    
    \item If \(M(0)=0\), then \(p(t)\) is stationary:
    \[
    p_k(t)\equiv p_k(0),\quad \forall\, t\ge0,\ k\in[d].
    \]
    \end{enumerate}
    \end{proposition}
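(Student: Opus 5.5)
The plan is to reduce \eqref{bipolar_p_ODE} to the explicit consensus formula \eqref{pk_formula} by a time change. Set $\tau(t):=\int_0^t \alpha(M(s))\,ds$. Since the vector field $2p_k\alpha(M)(\lambda_k-M)$ is smooth and the simplex is invariant, solutions are global. In the new time, $p_k$ solves the consensus replicator equation \eqref{one_part_p_ODE}. Concretely, for $k,m\in I_0$ we have
\[
\frac{d}{dt}\log\frac{p_k}{p_m}=2\alpha(M(t))(\lambda_k-\lambda_m).
\]
Integrating and normalizing exactly as in the proof of \eqref{pk_formula} gives
\[
p_k(t)=\frac{p_k(0)e^{2\lambda_k\tau(t)}}{\sum_{l}p_l(0)e^{2\lambda_l\tau(t)}}.
\]
The whole question then reduces to the behavior of $\tau(t)$ as $t\to\infty$.

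\emph{Case $M(0)=0$.} By \Cref{lem:M_monotone}, $M\equiv0$ is consistent: the function $M(t)\equiv0$ gives $\alpha\equiv 0$, so the constant $p(0)$ is a solution. By uniqueness, $p(t)\equiv p(0)$.

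\emph{Case $M(0)>0$.} By \Cref{lem:M_monotone}, $M(t)\ge M(0)>0$ for all $t$, since $M$ is nondecreasing while positive and its sign is preserved. Hence $\alpha(M(t))\ge\tanh(\beta M(0))>0$, so $\tau(t)\ge t\tanh(\beta M(0))\to+\infty$. Applying the argument of the consensus corollary with $\tau$ in place of $t$ (and $\lambda_*=\lambda_+$) yields $p_k\to0$ for $k\notin I_+$ and $p_k\to p_k(0)/\sum_{j\in I_+}p_j(0)$ for $k\in I_+$. Then $M(t)=\sum_k\lambda_kp_k(t)\to\lambda_+$.

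\emph{Case $M(0)<0$.} This is symmetric. Now $M(t)\le M(0)<0$, so $\alpha(M(t))\le\tanh(\beta M(0))<0$ and $\tau(t)\to-\infty$. In the explicit formula, $e^{2\lambda_k\tau}$ is dominated by the smallest $\lambda_k$ on $I_0$. The same computation, with the roles of max and min exchanged, factors out $e^{2\lambda_-\tau}$ and leaves terms $e^{2(\lambda_l-\lambda_-)\tau}\to0$ for $l\notin I_-$. This gives the stated limits and $M\to\lambda_-$.

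The only step needing care is the uniform lower bound on $|\alpha(M(t))|$, which guarantees $|\tau|\to\infty$ rather than a finite limit. This is exactly what the monotonicity in \Cref{lem:M_monotone} provides: $|M(t)|\ge|M(0)|$, and $\tanh$ is increasing and odd. Everything else is a transcription of the consensus-regime argument.
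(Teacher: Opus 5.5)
Your proposal is correct and follows essentially the same route as the paper. The paper also integrates the log-ratio equation to express $p_k/p_\ell$ through $\int_0^t\alpha(M(s))\,ds$, and it uses the monotonicity in \Cref{lem:M_monotone} to bound $|\alpha(M(t))|$ below by $|\tanh(\beta M(0))|$, so that this integral diverges; your normalized explicit formula is just a repackaging of that ratio identity, and your handling of $M(0)=0$ (noting that $p(0)$ is an equilibrium and invoking uniqueness) is if anything slightly tidier than the paper's.
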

    \begin{proof}
    The support of \(p\) is preserved along the flow. Indeed, if \(p_k(t_0)=0\) for
    some \(t_0\), then \eqref{bipolar_p_ODE} gives \(\dot p_k(t_0)=0\), and uniqueness
    implies \(p_k(t)\equiv0\) for all \(t\ge t_0\). Hence the active set remains equal
    to \(I_0\). For \(k,\ell\in I_0\), \eqref{bipolar_p_ODE} yields
    \[
    \frac{d}{dt}\log\frac{p_k}{p_\ell}
    =
    \frac{\dot p_k}{p_k}-\frac{\dot p_\ell}{p_\ell}
    =
    2\alpha(M)(\lambda_k-\lambda_\ell).
    \]
    Therefore,
    \begin{align}\label{eq:ratio_formula}
    \frac{p_k(t)}{p_\ell(t)}
    =
    \frac{p_k(0)}{p_\ell(0)}
    \exp\!\left(2(\lambda_k-\lambda_\ell)\int_0^t \alpha(M(s))\,ds\right).
    \end{align}
    
    Assume first that \(M(0)>0\). By \Cref{lem:M_monotone}, \(M(t)>0\) for all \(t\ge0\),
    and \(M\) is nondecreasing. Since \(M(t)\) is a convex combination of
    \(\{\lambda_k:k\in I_0\}\), we have \(M(t)\le\lambda_+\). Hence \(M(t)\) converges to
    some \(M_\infty\in(0,\lambda_+]\). Moreover,
    \[
    \alpha(M(t))=\tanh(\beta M(t))\ge \tanh(\beta M(0))>0,
    \]
    so that
    \[
    \int_0^t \alpha(M(s))\,ds\to+\infty
    \quad\text{as }t\to\infty.
    \]
    Fix \(j\in I_+\). If \(k\notin I_+\), then \(\lambda_k-\lambda_j<0\), and
    \eqref{eq:ratio_formula} gives
    \[
    \frac{p_k(t)}{p_j(t)}\to0.
    \]
    It follows that \(p_k(t)\to0\) for all \(k\notin I_+\). On the other hand, if
    \(k,j\in I_+\), then \(\lambda_k=\lambda_j\), so \eqref{eq:ratio_formula} shows that
    \(p_k(t)/p_j(t)\) is constant in time. Since the mass outside \(I_+\) vanishes and
    \(\sum_{m=1}^d p_m(t)=1\), we obtain
    \[
    p_k(t)\to \frac{p_k(0)}{\sum_{m\in I_+}p_m(0)},
    \quad k\in I_+.
    \]
    Consequently,
    \[
    M(t)=\sum_{k=1}^d \lambda_k p_k(t)\to\lambda_+.
    \]
    
    The case \(M(0)<0\) is treated in the same way. By \Cref{lem:M_monotone},
    \(M(t)<0\) for all \(t\ge0\), and \(M\) is nonincreasing. Since \(M(t)\ge\lambda_-\),
    it follows that \(M(t)\to M_\infty\in[\lambda_-,0)\), while
    \[
    \alpha(M(t))=\tanh(\beta M(t))\le\tanh(\beta M(0))<0,
    \]
    so that
    \[
    \int_0^t \alpha(M(s))\,ds\to-\infty
    \quad\text{as }t\to\infty.
    \]
    Fixing \(j\in I_-\), we obtain from \eqref{eq:ratio_formula} that \(p_k(t)/p_j(t)\to0\)
    for every \(k\notin I_-\), since then \(\lambda_k-\lambda_j>0\). Hence
    \(p_k(t)\to0\) for \(k\notin I_-\). For \(k\in I_-\), the ratios \(p_k(t)/p_j(t)\)
    remain constant because \(\lambda_k=\lambda_j=\lambda_-\). Since the total mass on
    \(I_-\) therefore converges to \(1\), we conclude that
    \[
    p_k(t)\to \frac{p_k(0)}{\sum_{m\in I_-}p_m(0)},
    \quad k\in I_-,
    \]
    and hence
    \[
    M(t)\to\lambda_-.
    \]
    
    Finally, if \(M(0)=0\), then \Cref{lem:M_monotone} implies \(M(t)\equiv0\) for all
    \(t\ge0\). Therefore \(\alpha(M(t))=0\), and \eqref{bipolar_p_ODE} reduces to
    \(\dot p_k(t)=0\) for every \(k\in[d]\). Hence \(p(t)\) is stationary.
    \end{proof}
    
    The two reduced manifolds already exhibit two qualitatively different selection mechanisms. On the consensus manifold, the dynamics selects the largest eigenvalue on the initial support and leads to homogeneous alignment. On the balanced bipolar manifold, the sign of the weighted average \(M\) determines whether the dynamics is driven toward the largest or smallest eigenvalue on the initial support. In particular, if \(V\) is positive definite or negative definite, then this sign is fixed a priori, so Proposition \ref{prop_asymp_bi_polar} directly identifies the asymptotically selected extreme mode. These reduced dynamics motivate the analysis of pure-mode equilibria and global mode selection in the full system carried out in the subsequent sections.

%============================================================================================================================
%============================================================================================================================
    \section{Pure-mode equilibria and local stability}\label{sec_stab}

    Motivated by the reduced dynamics in \Cref{sec_reduced}, we now study equilibria of the full system supported on a single eigendirection of \(V\). These configurations are the natural candidates for long-time limits suggested by the mode-selection mechanisms identified above. Indeed, for any \(p\in[d]\) and any sign pattern \(s_1,\cdots,s_n\in\{\pm1\}\), the configuration
    \[
    x_i=s_i e_p,\quad\forall~i\in[n],
    \]
    is an equilibrium of \eqref{TF_ODE_sym}. In particular, concentration on a single eigendirection should be understood as concentration on the one-dimensional eigenspace \(\mathrm{span}\{e_p\}\), not necessarily as convergence to a single oriented state. This leads naturally to two geometrically distinct classes of pure-mode equilibria: \textit{homogeneous states} and \textit{sign-split states}.
    %============================================================================================================================    
    \subsection{Homogeneous pure states}\label{subsec:stab_homo}

    We consider the homogeneous pure state
    \[
    x_i^*=e_p,\quad\forall~i\in[n].
    \]
    For the local stability estimate, we first derive the linearization of the system \eqref{TF_ODE_sym} at the pure state. Since the dynamics evolves on the sphere, the linearization is taken in the tangent space at \(e_p\). Accordingly, only perturbations transverse to \(e_p\) are relevant. We therefore consider perturbations of the form
    \[
    x_i=\frac{e_p+y_i}{\|e_p+y_i\|},
    \quad
    \langle y_i,e_p\rangle=0.
    \]
    The homogeneous linearization is recovered from the general pure-mode linearization derived in \Cref{app_subsec:linear} by specializing to the case \(s_i\equiv 1\). In this case, one obtains
    \[
    \dot y_i=V\overline y-\lambda_p y_i,
    \quad
    \overline y:=\frac1n\sum_{j=1}^n y_j.
    \]
    Projecting onto the eigenbasis of \(V\), we obtain, for each \(k\neq p\),
    \begin{align}\label{eq:linearized_homogeneous}
    \dot y_{i,k}=\lambda_k\overline y_k-\lambda_p y_{i,k},
    \quad
    \overline y_k:=\frac1n\sum_{j=1}^n y_{j,k}.
    \end{align}
    Decomposing
    \[
    y_{i,k}=\overline y_k+\widetilde y_{i,k},
    \quad
    \sum_{i=1}^n \widetilde y_{i,k}=0,
    \]
    we obtain
    \begin{align}\label{mean_fluc_ode}
    \dot{\overline y}_k=(\lambda_k-\lambda_p)\overline y_k,
    \quad
    \dot{\widetilde y}_{i,k}=-\lambda_p \widetilde y_{i,k}.
    \end{align}
    Thus, for each transverse mode \(k\neq p\), the mean component evolves with rate
    \(\lambda_k-\lambda_p\), whereas the fluctuation components decay with rate \(-\lambda_p\).
    
    \begin{proposition}[Local stability of homogeneous pure states]\label{prop_lin_homo}
    Fix \(p\in[d]\). Then the homogeneous equilibrium
    \[
    x_i^*=e_p,\forall~i\in[n],
    \]
    is linearly asymptotically stable if
    \[
    \lambda_p>0
    \quad\text{and}\quad
    \lambda_k<\lambda_p \quad\text{for all } k\neq p.
    \]
    It is linearly unstable if either
    \[
    \lambda_p<0,
    \quad\text{or}\quad
    \lambda_k>\lambda_p \quad\text{for some } k\neq p.
    \]
    \end{proposition}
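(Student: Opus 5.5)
The plan is to read off the spectrum of the linearized operator from the decoupled system \eqref{mean_fluc_ode}. The linearization \eqref{eq:linearized_homogeneous} is taken as given; it is obtained from the general pure-mode linearization in \Cref{app_subsec:linear} with $s_i\equiv1$. It acts on the tangent space $T_{X^*}(\mathbb S^{d-1})^n$, which we identify with $\{(y_i)_{i=1}^n:\langle y_i,e_p\rangle=0\}\cong\mathbb R^{n(d-1)}$. In the coordinates $y_{i,k}$ with $k\neq p$, the linear map decouples across the transverse modes $k$. For each fixed $k\neq p$, the $n\times n$ block acting on $(y_{1,k},\dots,y_{n,k})$ is
\[
A_k=\lambda_k\tfrac1n\mathbf 1\mathbf 1^\top-\lambda_p I_n .
\]

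First, I diagonalize $A_k$ explicitly. The vector $\mathbf 1$ is an eigenvector with eigenvalue $\lambda_k-\lambda_p$; this is the mean component $\overline y_k$. The orthogonal complement $\{\widetilde y:\sum_i\widetilde y_i=0\}$ is an eigenspace with eigenvalue $-\lambda_p$ and multiplicity $n-1$; these are the fluctuation components. This is exactly the decomposition in \eqref{mean_fluc_ode}. Since $A_k$ is symmetric, the full linearization is diagonalizable with real spectrum
\[
\{\lambda_k-\lambda_p: k\neq p\}\cup\{-\lambda_p\}\quad(\text{the latter absent if } n=1).
\]

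Stability then follows. If $\lambda_p>0$ and $\lambda_k<\lambda_p$ for all $k\neq p$, every eigenvalue is strictly negative. Hence the linearized flow decays exponentially, with rate $\min\{\lambda_p,\min_{k\ne p}(\lambda_p-\lambda_k)\}$, so the equilibrium is linearly asymptotically stable. Instability follows by exhibiting a positive eigenvalue in either case:
\begin{itemize}
\item if $\lambda_k>\lambda_p$ for some $k\neq p$, take the mode $y_{i,k}\equiv c$ for all $i$, which grows like $e^{(\lambda_k-\lambda_p)t}$;
\item if $\lambda_p<0$ and $n\ge2$, take the fluctuation mode $\widetilde y$ with zero mean, which grows like $e^{-\lambda_p t}=e^{|\lambda_p|t}$.
\end{itemize}

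The main obstacle is the degenerate case $n=1$ with $\lambda_p<0$, where no fluctuation direction exists. I would handle it as follows. If some $k\ne p$ has $\lambda_k>\lambda_p$, the first instability mechanism applies. Otherwise, $\lambda_p$ is the maximum eigenvalue and the $n=1$ flow is the consensus dynamics \eqref{one_part_ODE}, which by the explicit formula \eqref{consensus_sol} is attracted to $e_p$. In that situation the statement must implicitly assume $n\ge2$, or be read as requiring a positive eigenvalue, and I would add this as a remark rather than force the claim.

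A secondary check is that the appendix linearization is correct, in particular that the softmax weight derivatives cancel at the homogeneous state. At $s_i\equiv1$ all attention weights equal $1/n$, and their first-order variations multiply $Ve_p-\lambda_pe_p=0$, so they drop out. I would verify this briefly to justify \eqref{eq:linearized_homogeneous}.
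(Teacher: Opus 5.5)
Your proposal is correct and follows the paper's argument. For each transverse mode $k\neq p$ you split the block $\lambda_k\frac1n\mathbf 1\mathbf 1^\top-\lambda_p I_n$ into the mean direction (eigenvalue $\lambda_k-\lambda_p$) and the zero-mean fluctuations (eigenvalue $-\lambda_p$) and read off the signs, exactly as in \eqref{mean_fluc_ode}. Your caveat is also valid and is passed over by the paper's proof: the implication ``$\lambda_p<0\Rightarrow$ unstable'' relies on the fluctuation modes and so needs $n\ge2$, because for $n=1$ the flow $\dot x=P_x^\perp Vx$ makes $e_p$ linearly stable whenever $\lambda_p$ is the strict maximum eigenvalue, regardless of its sign.
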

    
    \begin{proof}
    The linearized dynamics \eqref{eq:linearized_homogeneous} splits into the mean modes and fluctuation modes described in \eqref{mean_fluc_ode}. For each \(k\neq p\), the mean mode has growth rate \(\lambda_k-\lambda_p\), while the fluctuation modes have growth rate \(-\lambda_p\). Hence all linearized modes decay if and only if
    \[
    \lambda_p>0
    \quad\text{and}\quad
    \lambda_k<\lambda_p \quad\text{for all } k\neq p.
    \]
    If either \(\lambda_p<0\) or \(\lambda_k>\lambda_p\) for some \(k\neq p\), then the linearization admits a positive eigenvalue, and the equilibrium is linearly unstable.
    \end{proof}
    
    This proposition shows that stable homogeneous alignment can occur only at a positive-dominant mode. This naturally leads to the question whether a different stability mechanism may arise when the limiting configuration is concentrated on the same one-dimensional eigenspace but with opposite orientations.
    %============================================================================================================================
    \subsection{Sign-split pure states}\label{sign_split_linear}

    We next consider genuinely sign-split pure states of the form
    \begin{align}\label{eq:sign_split_equilibrium}
        x_i^*=s_i e_p,
        \quad
        s_i\in\{\pm1\},
    \end{align}
    where the sign pattern is assumed to be nonconstant, so that both signs \(+1\) and \(-1\) occur. Unlike the homogeneous case in \Cref{subsec:stab_homo}, these equilibria allow concentration on the same one-dimensional eigenspace with opposite orientations. With the notation \(S_\pm\) and \(n_\pm\) from \eqref{Sn_def}, we now study the corresponding linearized dynamics.
    
    To study local stability, we consider tangent perturbations around \eqref{eq:sign_split_equilibrium} in the same form as before:
    \[x_i=\frac{x_i^*+y_i}{\|x_i^*+y_i\|},\quad\langle y_i,x_i^*\rangle=0.\]
    The general pure-mode linearization derived in \Cref{app_subsec:linear} yields the tangent system
    \begin{align}\label{y_split}
        \dot y_i=-\gamma_i y_i+\sum_{j=1}^n K_{ij}^* V y_j,
    \end{align}
    where
    \[
    K_{ij}^*:=K_{ij}(C^*)
    \quad\text{and}\quad
    \gamma_i:=\lambda_p s_i\sum_{j=1}^n K_{ij}^* s_j.
    \]
    
    At the equilibrium, the coefficients \(K_{ij}^*\) are constant on each block determined by the partition \(S_+\cup S_-\). Denote
    \begin{align*}
        a_+ &= \frac{e^{\beta\lambda_p}}{n_+e^{\beta\lambda_p}+n_-e^{-\beta\lambda_p}},
        &
        b_+ &= \frac{e^{-\beta\lambda_p}}{n_+e^{\beta\lambda_p}+n_-e^{-\beta\lambda_p}},\\
        a_- &= \frac{e^{\beta\lambda_p}}{n_-e^{\beta\lambda_p}+n_+e^{-\beta\lambda_p}},
        &
        b_- &= \frac{e^{-\beta\lambda_p}}{n_-e^{\beta\lambda_p}+n_+e^{-\beta\lambda_p}},
    \end{align*}
    and
    \[
    \gamma_+ = \lambda_p(n_+a_+-n_-b_+),
    \quad
    \gamma_- = \lambda_p(n_-a_- - n_+b_-).
    \]
    
    Since the equilibrium is supported on the eigendirection \(e_p\), the transverse modes \(k\neq p\) decouple in the eigenbasis of \(V\). For each \(k\neq p\), the corresponding perturbation variables satisfy
    \begin{align}\label{eq:linearized_sign_split}
    \dot y_{i,k}=
    \begin{dcases}
        -\gamma_+\,y_{i,k}
        +\lambda_k\Bigl(a_+\sum_{j\in S_+}y_{j,k}+b_+\sum_{j\in S_-}y_{j,k}\Bigr),
        & \text{if } i\in S_+,\\[2mm]
        -\gamma_-\,y_{i,k}
        +\lambda_k\Bigl(b_-\sum_{j\in S_+}y_{j,k}+a_-\sum_{j\in S_-}y_{j,k}\Bigr),
        & \text{if } i\in S_-.
    \end{dcases}
    \end{align}
    The derivation of \eqref{y_split}--\eqref{eq:linearized_sign_split} is deferred to \Cref{app_subsec:linear}.
    
    Introducing the group averages
    \[
    \overline y_{+,k}:=\frac1{n_+}\sum_{i\in S_+}y_{i,k},
    \quad
    \overline y_{-,k}:=\frac1{n_-}\sum_{i\in S_-}y_{i,k},
    \]
    and the fluctuations
    \[
    \widetilde y_{i,k}:=y_{i,k}-\overline y_{+,k}\quad (i\in S_+),
    \quad
    \widetilde y_{i,k}:=y_{i,k}-\overline y_{-,k}\quad (i\in S_-),
    \]
    we obtain
    \begin{align}\label{tildey_dot}
    	\dot{\widetilde y}_{i,k}=-\gamma_+\widetilde y_{i,k},\quad(i\in S_+),\quad\dot{\widetilde y}_{i,k}=-\gamma_-\widetilde y_{i,k},\quad(i\in S_-).
    \end{align}
    On the two-dimensional mean subspace spanned by \((\overline y_{+,k},\overline y_{-,k})\), the reduced dynamics is
    \begin{align}\label{bary_dot}
    	\frac{d}{dt}
	\begin{pmatrix}
		\overline y_{+,k}\\[1mm]
		\overline y_{-,k}
	\end{pmatrix}
	B_k
	\begin{pmatrix}
		\overline y_{+,k}\\[1mm]
		\overline y_{-,k}
	\end{pmatrix},\quad
	B_k=
        \begin{pmatrix}
            \lambda_k n_+ a_+ - \gamma_+ & \lambda_k n_- b_+\\
            \lambda_k n_+ b_- & \lambda_k n_- a_- - \gamma_-
        \end{pmatrix}.
    \end{align}
    
    Thus, for each transverse mode \(k\neq p\), the linearized operator splits into three invariant parts: fluctuations within \(S_+\), fluctuations within \(S_-\), and a two-dimensional mean component coupling the two sign groups. This immediately yields the following stability criterion.
    
    \begin{theorem}[Local stability of sign-split equilibria]\label{stability_split}
    	The equilibrium \eqref{eq:sign_split_equilibrium} is linearly asymptotically stable if and only if
    	\[
    	\gamma_+>0,
    	\quad
    	\gamma_->0,
    	\]
    	and, for every \(k\neq p\),
    	\[
    	\operatorname{tr}(B_k)<0,
    		\quad
    		\det(B_k)>0.
    	\]
    \end{theorem}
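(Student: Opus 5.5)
The plan is to use the block decomposition of the linearized operator already obtained from \eqref{y_split}--\eqref{eq:linearized_sign_split}. Linear asymptotic stability on the tangent space of $(\mathbb S^{d-1})^n$ at $X^*=(s_ie_p)_i$ holds exactly when every eigenvalue of the linearization has negative real part. First I will fix the relevant phase space. The tangent space at $X^*$ consists of tuples $(y_i)$ with $\langle y_i,e_p\rangle=0$, so in the eigenbasis of $V$ it is spanned by the coordinates $y_{i,k}$ with $i\in[n]$ and $k\neq p$. By \eqref{eq:linearized_sign_split}, the coordinates with different $k$ do not interact. The linearization is therefore block diagonal, with one $n\times n$ block $L_k$ for each $k\neq p$, and its spectrum is the union of the spectra of the $L_k$.

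Next, for a fixed $k\neq p$, I will show that $\mathbb R^n$ splits into three $L_k$-invariant subspaces:
\begin{itemize}
\item $W_+$, the vectors supported on $S_+$ with zero sum over $S_+$ (dimension $n_+-1$);
\item $W_-$, the vectors supported on $S_-$ with zero sum over $S_-$ (dimension $n_--1$);
\item $W_0$, the vectors constant on each of $S_+$ and $S_-$ (dimension $2$).
\end{itemize}
These three subspaces span $\mathbb R^n$.

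On $W_\pm$, the sums $\sum_{j\in S_\pm}y_{j,k}$ vanish, so $L_k$ acts as $-\gamma_\pm\,\mathrm{Id}$. This is \eqref{tildey_dot}. One should check invariance directly: if $y\in W_+$, then both coupling sums in \eqref{eq:linearized_sign_split} are zero, so $L_ky=-\gamma_+y$, which is again supported on $S_+$ with zero sum.

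On $W_0$, write $y_{j,k}=\overline y_{+,k}$ for $j\in S_+$ and $y_{j,k}=\overline y_{-,k}$ for $j\in S_-$. Then $\sum_{j\in S_\pm}y_{j,k}=n_\pm\overline y_{\pm,k}$, and substituting into \eqref{eq:linearized_sign_split} shows that $L_k$ acts on $W_0$ by the matrix $B_k$ of \eqref{bary_dot}.

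It follows that the spectrum of $L_k$ is
\[
\{-\gamma_+\}\ (\text{multiplicity } n_+-1)\ \cup\ \{-\gamma_-\}\ (\text{multiplicity } n_--1)\ \cup\ \sigma(B_k).
\]
A real $2\times 2$ matrix has both eigenvalues in the open left half-plane if and only if its trace is negative and its determinant is positive (Routh--Hurwitz). This gives the criterion $\operatorname{tr}(B_k)<0$ and $\det(B_k)>0$ for every $k\neq p$.

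The main obstacle is the "only if" direction for $\gamma_\pm$ when $n_+=1$ or $n_-=1$. In that case the corresponding fluctuation space $W_\pm$ is trivial, so $-\gamma_\pm$ does not occur as an eigenvalue. I would show that the sign condition on $\gamma_\pm$ is then implied by the $B_k$ conditions, or, if that fails, record that the necessity of $\gamma_\pm>0$ holds exactly when $n_\pm\ge2$.

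I would first try to derive $\gamma_\pm>0$ from the $B_k$ conditions when $n_\pm=1$. For this I would compute $n_+a_++n_-b_+=1$ and $\gamma_+=\lambda_p(n_+a_+-n_-b_+)$. A candidate route uses $\det(B_k)>0$ and $\operatorname{tr}(B_k)<0$ for a specific $k$, together with $a_\pm,b_\pm>0$. If this does not close, I will state the theorem with the implicit convention $n_\pm\ge2$, or equivalently read the condition on $\gamma_\pm$ as vacuous when the corresponding fluctuation space is empty.

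Finally, I would note that the linearization was taken in the tangent coordinates of the normalized perturbation $x_i=(x_i^*+y_i)/\|x_i^*+y_i\|$, which are valid local coordinates on the sphere. So negative spectrum of the linearization gives linear asymptotic stability of the equilibrium, and any eigenvalue with nonnegative real part precludes it.
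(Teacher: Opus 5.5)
Your argument is the paper's own: for each $k\neq p$ you use the same splitting $\mathbb R^n=V_+\oplus V_-\oplus V_{\mathrm{mean}}$, the same spectrum ($-\gamma_\pm$ with multiplicities $n_\pm-1$, together with $\sigma(B_k)$), and the same trace--determinant test for $B_k$. The one place where you hedge is exactly where the paper's proof is defective. The appendix asserts that ``the scalar eigenvalues $-\gamma_\pm$ are negative iff $\gamma_\pm>0$'', without noticing that $-\gamma_\pm$ is not an eigenvalue at all when $n_\pm=1$.

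Your first route, deducing $\gamma_\pm>0$ from the $B_k$ conditions, cannot close in general. The ``only if'' direction is false when exactly one of $n_\pm$ equals $1$. Take $d=2$, $V=\mathrm{diag}(1,-10)$, $p=1$, $n_+=1$, $n_-=3$ and $\beta=\tfrac12\ln 2$, so that $e^{2\beta\lambda_p}=2$. Then $a_+=\tfrac25$, $b_+=\tfrac15$, $a_-=\tfrac27$, $b_-=\tfrac17$. Hence $\gamma_+=-\tfrac15$, $\gamma_-=\tfrac57$, and
\[
B_2=\begin{pmatrix}-\tfrac{19}{5} & -6\\ -\tfrac{10}{7} & -\tfrac{65}{7}\end{pmatrix},
\qquad
\operatorname{tr}(B_2)=-\tfrac{458}{35}<0,
\qquad
\det(B_2)=\tfrac{187}{7}>0.
\]
Since $V_+=\{0\}$, the four-dimensional linearization has spectrum $\{-\tfrac57,-\tfrac57\}\cup\sigma(B_2)$, all in the open left half-plane. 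The equilibrium is therefore linearly asymptotically stable, although $\gamma_+<0$. The reason is that the lone particle's transverse perturbation lies in $V_{\mathrm{mean}}$. There the self-attention term $\lambda_k n_+a_+$ in $(B_k)_{11}$ outweighs the destabilizing contribution $-\gamma_+>0$.

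So only your fallback is viable, and it is a necessary correction rather than a convention. The criterion should demand $\gamma_+>0$ only when $n_+\ge2$, and $\gamma_->0$ only when $n_-\ge2$ (with $d\ge2$, so that some $k\neq p$ exists). The ``if'' direction holds as stated.

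Your first route does work when $n_+=n_-=1$. There $\gamma_\pm=\lambda_p\tanh(\beta\lambda_p)>0$ unless $\lambda_p=0$, and in that case $a_\pm=b_\pm=\tfrac12$ forces $\det(B_k)=0$.

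The same correction applies to \Cref{rem:beta_dependent_stability}. Its threshold $\lambda_p>\frac1{2\beta}|\ln r|$ is not necessary when $\min(n_+,n_-)=1$: in the example above it would require $1>\log_2 3$, which is false, yet the equilibrium is stable.
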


    \begin{proof}[Sketch of proof]
    For each transverse mode \(k\neq p\), the linearized system admits the invariant decomposition described in \Cref{app_subsec:split_decomp}, and \Cref{app_subsec:thm52} identifies the spectrum on each invariant component. The claimed criterion then follows by requiring all scalar modes to be stable and imposing linear asymptotic stability on the two-dimensional mean component associated with the block \(B_k\). This argument is carried out in detail in \Cref{app_subsec:thm52}.
    \end{proof}

    \begin{remark}[$\beta$-dependent stability regimes]\label{rem:beta_dependent_stability}
        Set
        \begin{align*}
            c_\beta(\lambda_p):=e^{2\beta\lambda_p},\quad r:=\frac{n_+}{n_-}.
        \end{align*}
        Then the stability conditions in \Cref{stability_split} are equivalent to the following.

        For $\lambda_p>0$,
        \begin{align*}
            \lambda_p>\frac{1}{2\beta}|\ln r|,\quad\lambda_k<\lambda_p\sigma(c_\beta,r),\quad\forall~k\ne p.
        \end{align*}

        For $\lambda_p<0$,
        \begin{align*}
            \lambda_p<-\frac{1}{2\beta}|\ln r|,\quad\lambda_p<\lambda_k<\lambda_p\sigma(c_\beta,r),\quad\forall~k\ne p.
        \end{align*}

        Here,
        \begin{align*}
            \sigma(c_\beta,r)&:=\frac{(c_\beta-r)(c_\beta r-1)}{r(c_\beta^2-1)}.
        \end{align*}
        In particular, in the admissible regimes above one has
        \[
            0<\sigma(c_\beta,r)<1
            \quad\text{if } \lambda_p>0,
            \quad
            -1<\sigma(c_\beta,r)<0
            \quad\text{if } \lambda_p<0.
        \]
        Moreover, since
        \[
            \sigma(c,r)=-\sigma(c^{-1},r),
        \]
        the cases $\lambda_p=a$ and $\lambda_p=-a$ yield the same upper bound for $\lambda_k$.

        A representative slice of this \(\beta\)-dependent stability boundary will be visualized in \Cref{sec_numer}.
    \end{remark}

    The analysis above makes precise the dichotomy suggested by the reduced dynamics in \Cref{sec_reduced}. Whereas the consensus dynamics favors homogeneous alignment, the balanced bipolar dynamics points to a polarized mode-selection mechanism. The local stability analysis in this section shows how this distinction is reflected at the level of pure-mode equilibria of the full system. This distinction will reappear in the global mode-selection analysis of the next section.
%============================================================================================================================
%============================================================================================================================
    \section{Global mode selection}\label{sec_global}
    
    \Cref{sec_reduced} and \Cref{sec_stab} reveal two distinct mechanisms of mode selection in the symmetric self-attention dynamics \eqref{TF_ODE_sym}. Dominant positive modes favor homogeneous alignment, whereas negative modes may instead support sign-split configurations. In this section, we show how these reduced and local pictures extend to global selection results under additional structural assumptions.

    We treat two regimes. First, in the positive-dominant regime, namely when
    \[\lambda_1>\max_{k\ge2}|\lambda_k|,\]
    we prove, under a one-sided cone assumption, convergence of the full \(n\)-particle system to the dominant positive eigendirection. Second, in the two-particle negative-definite regime, namely when $n=2$ and
    \[0>\lambda_1\ge\lambda_2\ge\cdots\ge\lambda_d,\]
    we show that the dynamics becomes asymptotically sign-split and that, when $\lambda_d$ is simple, the selected limit is the sign-split state associated with the smallest eigenvalue.
%============================================================================================================================

    \subsection{Global selection under a dominant-positive mode of \(V\)}

    Assume that one positive eigendirection dominates all others in modulus:
    \[
    \lambda_1>\max_{k\ge2}|\lambda_k|.
    \]
    Under this condition, we shall prove below a uniform one-sided lower bound on the first modal coordinate defines a forward-invariant cone. This excludes drift toward competing eigendirections and yields exponential decay of all transverse modes. Consequently, the full configuration converges to the homogeneous state aligned with \(e_1\).
    
    \begin{theorem}[Selection of the dominant positive eigendirection in a forward-invariant cone]
    	\label{thm:cone_selection_e1}
    	Assume that
    	\[
    	\lambda_1>\max_{k\ge2}|\lambda_k|,
    	\]
        and let \(C=(c_{i,k})\) be a solution of the self-attention dynamics \eqref{C_ODE}. Suppose that there exists \(\delta>0\) such that
    	\[
    	c_{i,1}(0)\ge \delta
    	\quad\forall~i\in[n].
    	\]
    	Then the following assertions hold:
    	\begin{enumerate}
    		\item The cone
    		\[
    		\mathcal C_\delta:=\Bigl\{(x_i)_{i=1}^n\in(\mathbb S^{d-1})^n:\ c_{i,1}\ge \delta,\quad\forall~i\in[n]\Bigr\}
    		\]
    		is forward invariant.
    		
    		\item For every \(k\neq1\), if we define
    		\[
    		r_{i,k}(t):=\frac{c_{i,k}(t)}{c_{i,1}(t)},
    		\quad
    		R_k(t):=\max_{1\le i\le n}|r_{i,k}(t)|,
    		\]
    		then
    		\[
    		R_k(t)\le R_k(0)e^{-\delta(\lambda_1-|\lambda_k|)t},
    		\quad t\ge0.
    		\]
    		
    		\item For every $i\in[n]$,
    		\[
    		x_i(t)\to e_1
    		\quad\text{as }t\to\infty.
    		\]
    	\end{enumerate}
    \end{theorem}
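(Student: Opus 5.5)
The plan is to work with the ratios $r_{i,k}=c_{i,k}/c_{i,1}$, which are well defined as long as every $c_{i,1}>0$. In these variables the normalizing term $\phi_i(C)$ of \eqref{C_ODE} cancels, and what remains is a linear, contractive, weighted-averaging structure. Part 1 (invariance of $\mathcal C_\delta$) and part 2 (the decay of $R_k$) then both come from a maximum-principle argument over the particle index. Part 3 follows from part 2.

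\textbf{Ratio equation.} On any time interval where $c_{j,1}>0$ for all $j$, differentiate $r_{i,k}$ using \eqref{C_ODE}. The $\phi_i c_{i,k}c_{i,1}$ contributions cancel. Substituting $c_{j,k}=r_{j,k}c_{j,1}$ gives
\[
\dot r_{i,k}=A_i\Bigl(\lambda_k\sum_{j=1}^n w_{ij}r_{j,k}-\lambda_1 r_{i,k}\Bigr),
\qquad
A_i:=\frac{\sum_j K_{ij}c_{j,1}}{c_{i,1}},
\quad
w_{ij}:=\frac{K_{ij}c_{j,1}}{\sum_m K_{im}c_{m,1}} .
\]
The weights satisfy $w_{ij}\ge0$ and $\sum_j w_{ij}=1$. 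Since $c_{i,1}\le1$, $\sum_jK_{ij}=1$ and $c_{j,1}\ge\delta$ on the cone, we get $A_i\ge\sum_jK_{ij}c_{j,1}\ge\delta$.

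\textbf{Cone invariance.} Set $\mu:=\max_{k\ge2}|\lambda_k|<\lambda_1$ and write $r_i:=(r_{i,k})_{k\ge2}$. On the sphere, $c_{i,1}=(1+|r_i|^2)^{-1/2}$ whenever $c_{i,1}>0$. Hence $\min_i c_{i,1}\ge\delta$ is equivalent to $\rho:=\max_i|r_i|^2\le\delta^{-2}-1$.

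Compute
\[
\tfrac{d}{dt}|r_i|^2=2A_i\Bigl(\sum_{k\ge2}\lambda_k r_{i,k}\sum_j w_{ij}r_{j,k}-\lambda_1|r_i|^2\Bigr).
\]
By Cauchy--Schwarz together with the convexity of the weights, the first term is at most $\mu|r_i|\max_j|r_j|$. At an index $i$ realizing the maximum, this yields $\tfrac{d}{dt}|r_i|^2\le 2A_i(\mu-\lambda_1)\rho\le0$.

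A standard Dini-derivative argument for a maximum of finitely many $C^1$ functions then shows that $\rho$ is nonincreasing. This holds on the maximal interval $[0,T)$ on which all $c_{j,1}>0$. On that interval $c_{j,1}\ge\delta$, so $c_{j,1}$ cannot reach $0$, which forces $T=\infty$ by continuity. This continuation/bootstrap step is the main obstacle. The point to get right is to use the vector quantity $\max_i|r_i|$ rather than the separate maxima $R_k$: the latter only control $\sum_k R_k^2$, which need not be bounded by $\delta^{-2}-1$.

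\textbf{Decay of $R_k$ and convergence.} With the cone now invariant, fix $k\neq1$ and an index $i$ achieving $R_k=\max_i|r_{i,k}|$. The ratio equation gives
\[
D^+R_k\le A_i(|\lambda_k|R_k-\lambda_1R_k)\le-\delta(\lambda_1-|\lambda_k|)R_k ,
\]
where the last step uses $A_i\ge\delta$ and $\lambda_1>|\lambda_k|$. Grönwall's inequality yields part 2.

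For part 3, every $R_k(t)\to0$, so $|r_i(t)|\to0$ for each $i$. Since $c_{i,1}>0$, this gives $c_{i,1}=(1+|r_i|^2)^{-1/2}\to1$ and $c_{i,k}=r_{i,k}c_{i,1}\to0$. Hence $x_i(t)\to e_1$, and the convergence is exponential.
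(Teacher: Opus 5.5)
Your proof is correct. Your ratio equation, the maximum-principle estimate for $R_k$ using $A_i\ge\delta$, and the passage to $x_i\to e_1$ coincide with Steps 2--4 of the paper's proof.

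The difference is in part 1. The paper does not use ratios there. It applies a minimum principle directly to $c_{i,1}$ in \eqref{C_ODE}, using only $\phi_i=\sum_j K_{ij}\langle x_i,Vx_j\rangle\le\|V\|_{\mathrm{op}}=\lambda_1$. At a minimizing index this gives $\dot c_{i_*,1}\ge(\lambda_1-\phi_{i_*})\min_j c_{j,1}\ge 0$.

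Since $\max_i|r_i|^2=(\min_i c_{i,1})^{-2}-1$ whenever all $c_{i,1}>0$, your $\rho$ is the same monotone quantity written in ratio coordinates. The paper bounds the whole bilinear form $\langle x_i,Vx_j\rangle$ by $\lambda_1$. You bound only its transverse part, $|\langle r_i,\mathrm{diag}(\lambda_2,\dots,\lambda_d)r_j\rangle|\le\mu|r_i||r_j|$.

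What your version buys is an explicit strict rate $\frac{d}{dt}|r_i|^2\le-2A_i(\lambda_1-\mu)\rho$ at the maximizer. What it costs is that ratios are only defined while all $c_{j,1}>0$, which forces the continuation argument you flag as the main obstacle. The paper's direct argument shows this obstacle comes from the choice of variables. Working with $c_{i,1}$ itself, one only needs $\min_j c_{j,1}\ge 0$, which holds by continuity from $\min_j c_{j,1}(0)\ge\delta>0$.

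Your remark that the separate maxima $R_k$ cannot yield cone invariance is correct. It is consistent with the paper's ordering: part 1 is proved first, independently of the $R_k$, and then supplies $A_i\ge\delta$ for part 2.
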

    \begin{proof}[Sketch of proof]
    	The argument has three ingredients. First, the assumption \(c_{i,1}(0)\ge\delta\) defines a forward-invariant cone: the minimum of the first modal coordinate is nondecreasing because the nonlinear term \(\phi_i\) is uniformly bounded above by \(\lambda_1\). Second, once \(c_{i,1}\) stays uniformly positive, one can introduce the transverse ratios
    \[
    r_{i,k}=\frac{c_{i,k}}{c_{i,1}},\quad k\neq1,
    \]
    for which the nonlinear terms cancel exactly. The resulting ratio system satisfies a comparison estimate in the second assertion. Finally, this exponential decay forces every transverse mode \(c_{i,k}(t)\), \(k\neq1\), to vanish, and the positivity of \(c_{i,1}\) selects the orientation \(+e_1\). The detailed estimates are given in \Cref{proof_thm61}.
    \end{proof}
    
    \Cref{thm:cone_selection_e1} immediately yields the corresponding statement for the opposite orientation. As the proof is analogous to \Cref{thm:cone_selection_e1}, we omit the proof.
    
    \begin{corollary}[Selection of the opposite orientation]\label{cor:selection_e1}
    	Under the assumptions of \Cref{thm:cone_selection_e1}, if instead
    	\[
    	c_{i,1}(0)\le -\delta<0
    	\quad\forall~i\in[n],
    	\]
    	then
    	\[
    	x_i(t)\to -e_1
    	\quad\forall~i\in[n].
    	\]
    \end{corollary}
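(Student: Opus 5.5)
The plan is to reduce the corollary to \Cref{thm:cone_selection_e1} using the reflection symmetry $x\mapsto -x$ of the dynamics, rather than repeating the estimates. Given a solution $X(t)=(x_i(t))_{i=1}^n$ of \eqref{TF_ODE_sym}, I set $\widetilde x_i(t):=-x_i(t)$ for all $i\in[n]$. The attention weights depend on the configuration only through $\langle x_i,Vx_j\rangle$, and this is invariant when every particle is reflected simultaneously, since $\langle -x_i,V(-x_j)\rangle=\langle x_i,Vx_j\rangle$. Hence $Z_{\beta,i}$ and $K_{ij}$ are unchanged. The projection satisfies $P^\perp_{-x}=P^\perp_{x}$, because $y-\langle -x,y\rangle(-x)=y-\langle x,y\rangle x$. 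The remaining ingredient, the sum $\sum_j e^{\beta\langle x_i,Vx_j\rangle}Vx_j$, changes sign under the reflection. Putting these together, the right-hand side of \eqref{TF_ODE_sym} evaluated at $\widetilde X$ equals minus the right-hand side evaluated at $X$, which is exactly $\dot{\widetilde x}_i$. So $\widetilde X$ is again a solution.

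In modal coordinates this reads as follows: if $C$ solves \eqref{C_ODE}, then so does $-C$. The quickest check is that $K_{ij}(-C)=K_{ij}(C)$ and $\phi_i(-C)=\phi_i(C)$, since $\phi_i$ is quadratic in $C$, while the remaining terms of \eqref{C_ODE} are linear in $C$.

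Next I verify the hypotheses of \Cref{thm:cone_selection_e1} for $\widetilde C=-C$. The spectral assumption $\lambda_1>\max_{k\ge2}|\lambda_k|$ is unchanged. The initial condition becomes $\widetilde c_{i,1}(0)=-c_{i,1}(0)\ge\delta$ for all $i$. The theorem then gives that $\widetilde X$ stays in $\mathcal C_\delta$, that the ratio bounds hold (these are identical for $\widetilde C$, since the ratios $r_{i,k}$ are invariant), and that $\widetilde x_i(t)\to e_1$. Therefore $x_i(t)=-\widetilde x_i(t)\to -e_1$ for every $i$. As a byproduct, the cone $-\mathcal C_\delta=\{c_{i,1}\le-\delta\ \forall i\}$ is forward invariant.

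The only step needing care is confirming that uniqueness of solutions lets us identify $\widetilde X$ with the solution started from $-X(0)$. This holds because the vector field is smooth on $(\mathbb S^{d-1})^n$, so there is no real obstacle. The main point is recognizing the $\mathbb Z_2$ equivariance of the vector field.
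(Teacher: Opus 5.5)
Your argument is correct, but it takes a different route from the paper. The paper omits the proof, saying only that it is analogous to that of \Cref{thm:cone_selection_e1}. The intended argument is therefore to rerun each step with the signs reversed:
\begin{itemize}
\item track $\max_i c_{i,1}(t)\le-\delta$ and show it is nonincreasing, using $\phi_i\le\lambda_1$;
\item note that $\alpha_i=\sum_j K_{ij}c_{j,1}/c_{i,1}\ge\delta$ still holds, because numerator and denominator are both negative (and likewise $\omega_{ij}\ge 0$);
\item repeat the ratio comparison.
\end{itemize}

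You instead use the observation that the vector field is odd under the global reflection $X\mapsto -X$. Equivalently, in \eqref{C_ODE} the quantities $K_{ij}$ and $\phi_i$ are even in $C$, so $-C$ is again a solution, and the theorem then applies to it verbatim.

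Your reduction is shorter and transfers all three assertions at once: forward invariance of $-\mathcal C_\delta$, the same decay rate for the ratios $r_{i,k}$ (which are unchanged under $C\mapsto -C$), and convergence to $-e_1$. It also makes explicit the $\mathbb Z_2$-equivariance that pairs the pure-mode equilibria $(s_ie_p)_i$ with $(-s_ie_p)_i$. The mirrored direct proof needs no such structural observation but duplicates every estimate.

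One minor point: you do not actually need uniqueness of solutions. $\widetilde X:=-X$ is defined directly from the given solution, and the theorem applies to any solution satisfying the cone condition at $t=0$.
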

    
    The cone argument above captures a genuinely alignment-driven regime. Once the leading positive mode is uniformly present across all particles, it remains dominant and determines the global asymptotic state. This mechanism is inherently one-sided, however, and does not apply to the polarized configurations arising in the negative-definite case. To analyze that regime, we instead exploit the geometry of anti-alignment, which becomes particularly transparent in the two-particle system.
    %============================================================================================================================
    \subsection{Two-particle sign-split selection in the negative-definite regime}\label{subsec:two}

    We now turn to the two-particle case \eqref{TF_ODE_sym}, i.e., $n=2$, and assume that \(V\) is symmetric and negative definite
    \[0>\lambda_1\ge\lambda_2\ge\cdots\ge\lambda_d.\]
    In this regime, the relevant geometry is no longer alignment but anti-alignment. The key quantity is the pairwise correlation
    \[
    \rho(t):=\langle x_1(t),x_2(t)\rangle.
    \]
    The next lemma shows that \(\rho\) is strictly decreasing whenever the two particles are neither fully aligned nor fully anti-aligned.

    \begin{lemma}[Strict monotonicity of the pairwise correlation]\label{rho_monotone_two_particle}
    Suppose that the symmetric matrix \(V\) is negative definite and let \((x_1(t),x_2(t))\in\mathbb S^{d-1}\times\mathbb S^{d-1}\) be a solution to the two-particle system \eqref{TF_ODE_sym}. Then, $\rho(t)$ is strictly decreasing on $\rho(t)\in(-1,1)$.
    \end{lemma}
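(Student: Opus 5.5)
The plan is to compute $\dot\rho$ explicitly from \eqref{TF_ODE_sym} with $n=2$ and show that it is strictly negative whenever $\rho\in(-1,1)$. Write
\[
a:=\langle x_1,Vx_1\rangle,\quad b:=\langle x_2,Vx_2\rangle,\quad c:=\langle x_1,Vx_2\rangle,
\]
so that $K_{11}=e^{\beta a}/(e^{\beta a}+e^{\beta c})$, $K_{12}=1-K_{11}$, and similarly $K_{22}=e^{\beta b}/(e^{\beta b}+e^{\beta c})$, $K_{21}=1-K_{22}$. Using $\langle P_{x_1}^\perp y,x_2\rangle=\langle y,x_2\rangle-\rho\langle y,x_1\rangle$ and its analogue for $x_2$, a direct computation gives
\[
\dot\rho=K_{11}(c-\rho a)+K_{12}(b-\rho c)+K_{22}(c-\rho b)+K_{21}(a-\rho c).
\]
All of these quantities depend only on the restriction of the quadratic form $\langle\cdot,V\cdot\rangle$ to the plane $\Pi:=\mathrm{span}\{x_1,x_2\}$, which is nondegenerate when $|\rho|<1$.

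The reduction step is as follows. Choose an orthonormal basis of $\Pi$ in which the restricted form is diagonal, $-\mu_1\xi_1^2-\mu_2\xi_2^2$ with $\mu_1,\mu_2>0$; this is possible because $V$ is negative definite. Parametrize $x_1=(\cos\theta_1,\sin\theta_1)$ and $x_2=(\cos\theta_2,\sin\theta_2)$ in this basis, so that $\rho=\cos(\theta_1-\theta_2)$. This turns $\dot\rho$ into an explicit function of $(\theta_1,\theta_2,\mu_1,\mu_2,\beta)$.

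I would then group the terms as $(c-\rho a)$ with $(a-\rho c)$, and $(c-\rho b)$ with $(b-\rho c)$. The useful identities are $c-\rho a=\langle x_1,Vw\rangle$ with $w:=x_2-\rho x_1\perp x_1$, and $b-\rho c=\langle w,Vx_2\rangle$, together with their mirror images. Since $x_2=w+\rho x_1$, this gives
\[
K_{11}(c-\rho a)+K_{12}(b-\rho c)=(K_{11}+\rho K_{12})\langle x_1,Vw\rangle+K_{12}\langle w,Vw\rangle .
\]
The last term is $\le -\lambda_{\min}|\lambda_1|(1-\rho^2)K_{12}<0$, strictly because $K_{12}>0$ and $|w|^2=1-\rho^2>0$. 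The same holds for the symmetric term built from $u:=x_1-\rho x_2$.

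The main obstacle is controlling the cross terms $(K_{11}+\rho K_{12})\langle x_1,Vw\rangle$ and $(K_{22}+\rho K_{21})\langle x_2,Vu\rangle$, which have no sign a priori. First I would note that $\dot\rho$ is affine in $(K_{12},K_{21})$. The corner $K_{12}=K_{21}=1$ gives $a+b-2\rho c$, which equals $\langle x_1-x_2,V(x_1-x_2)\rangle$ plus a correction of size $(1-\rho)c$. Second, I would exploit the softmax constraint: $K_{12}>1/2$ exactly when $c>a$, so large cross terms of the wrong sign are paired with large attention weights on the negative-definite term $\langle w,Vw\rangle$. Concretely, in the two-dimensional parametrization I would try to write $\dot\rho=-(1-\rho^2)\,G$ and show $G>0$ by a monotonicity or rearrangement argument. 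The inequality to exploit is $(K_{11}-K_{12})(a-c)\ge0$ and its analogue for $x_2$, which holds because $s\mapsto e^{\beta s}$ is increasing. If this direct approach fails, the fallback is to prove $\dot\rho\le0$ via the rearrangement inequality and obtain strictness from the $K_{12}\langle w,Vw\rangle$ term alone.
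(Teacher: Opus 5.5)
There is a genuine gap. Your setup is correct: the formula for $\dot\rho$, the identities $c-\rho a=\langle x_1,Vw\rangle$ and $b-\rho c=\langle w,Vw\rangle+\rho\langle x_1,Vw\rangle$, and the strict negativity of $K_{12}\langle w,Vw\rangle$ all hold. But the proposal stops where the content of the lemma begins. You never show that the indefinite cross terms are dominated, and the tools you name cannot do it.

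An argument that uses only the \emph{ordering} of the softmax weights must fail. This covers the inequalities $(K_{11}-K_{12})(a-c)\ge0$ and $(K_{22}-K_{21})(b-c)\ge0$, rearrangement, and evaluating the affine map $(K_{12},K_{21})\mapsto\dot\rho$ at ordering-admissible corners. Concretely, work in $\mathbb R^2$ with $V=-\mathrm{diag}(1,\varepsilon)$, $\varepsilon>0$ small, and take $x_1=(\cos\theta,\sin\theta)$, $x_2=-(\sin\theta,\cos\theta)$ with $0<\theta<\pi/4$ and $\sin2\theta=0.9$. Then $\rho=-0.9$ and $a\approx-0.72<b\approx-0.28<c\approx0.45$. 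The weights $K_{11}=0$, $K_{12}=1$, $K_{21}=K_{22}=\tfrac12$ satisfy both of your inequalities, and also $K_{12}\ge K_{21}$, which matches $c-a>c-b$. Yet your formula gives $\dot\rho=(b-\rho c)+\tfrac12\bigl[(c-\rho b)+(a-\rho c)\bigr]\approx0.06>0$.

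No softmax with a single $\beta$ produces these weights. For a fixed configuration, $K_{12}$ and $K_{21}$ are logistic functions of $\beta(c-a)$ and $\beta(c-b)$ with the \emph{same} $\beta$, so $K_{12}\to1$ forces $K_{21}\to1$. What is missing is a quantitative use of this coupling. For the same reason, your fallback (``$\dot\rho\le0$ by rearrangement, strictness from $K_{12}\langle w,Vw\rangle$'') has no valid first step.

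The paper supplies this step by passing to $s=x_1+x_2$, $d=x_1-x_2$, $B=-V$, and
$A=\langle s,Bs\rangle$, $C=\langle s,Bd\rangle$, $D=\langle d,Bd\rangle$.
In these variables $K_{11}-K_{12}=-\tanh(\beta(C+D)/4)$ and $K_{21}-K_{22}=\tanh(\beta(D-C)/4)$. Set $r=\beta D/2>0$ and $t=\beta C/2$, and use $\tanh p\pm\tanh q=\sinh(p\pm q)/(\cosh p\cosh q)$. This gives
\[
\dot\rho=-\frac{1-\rho}{2}A-\frac{(1+\rho)D\sinh r-(1-\rho)C\sinh t}{2(\cosh r+\cosh t)}.
\]
The only term of the wrong sign, $(1-\rho)C\sinh t$, is absorbed by the $C^2/D$ part of $A$. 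This uses Cauchy--Schwarz $C^2\le AD$ in the $B$-inner product together with the elementary inequality $t^2(\cosh r+\cosh t)\ge rt\sinh t$. Strictness then comes from $(1+\rho)D\sinh r>0$.

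Your planar parametrization would need an inequality of this kind, written explicitly in $(\theta_1,\theta_2,\mu_1,\mu_2,\beta)$, that exploits the logistic form of the weights rather than only their monotonicity. As written, the proposal does not contain it.
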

    \begin{proof}[Sketch of proof]
	Writing the dynamics in terms of the sum and difference variables
    \[
    s:=x_1+x_2,
    \quad
    d:=x_1-x_2,
    \]
    and using the positive definite matrix \(B:=-V\), one obtains an explicit expression for \(\dot\rho\) in terms of quadratic forms in \(s\) and \(d\) and the corresponding softmax weights. After a suitable rearrangement, this expression is seen to be strictly negative whenever \(-1<\rho<1\). The detailed computation is given in \Cref{proof_lem63}.
    \end{proof}

    Hence \(\rho\) is strictly decreasing on every interval on which \(\rho(t)\in(-1,1)\).
    In particular, every nontrivial trajectory satisfies \(\rho(t)\to -1\), and is therefore
    driven asymptotically toward the two-particle sign-split manifold
    \[
        \mathcal M_{\mathrm{bbp}}
        :=
        \{(u,-u):u\in \mathbb S^{d-1}\},
    \]
    which coincides with the balanced bipolar manifold introduced in \Cref{subsec:bipolar_mfd}
    when \(n=2\).
    
    Once the dynamics approaches \(\mathcal M_{\mathrm{bbp}}\), the remaining question is which
    eigendirection is selected within this polarized manifold. The next theorem shows that,
    outside a measure-zero exceptional set, the selected mode is the one associated with the
    smallest eigenvalue.

    \begin{theorem}[Almost-everywhere selection of the most negative eigendirection for two particles]
    \label{ae_selection_most_negative}
    Assume the hypotheses of \Cref{rho_monotone_two_particle}, and suppose that $V$ has its smallest eigenvalue $\lambda_d$ simple. Let \((x_1(t),x_2(t))\) be the solution to \eqref{TF_ODE_sym} with arbitrary initial data \((x_1(0),x_2(0))\in \mathbb S^{d-1}\times \mathbb S^{d-1}\). Then for almost every initial datum $(x_1(0),x_2(0))$, there exists \(\sigma\in\{\pm1\}\) such that
    \[
    (x_1(t),x_2(t))\to (\sigma e_d,-\sigma e_d)
    \quad\text{as }~~t\to\infty.
    \]
    \end{theorem}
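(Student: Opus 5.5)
The plan is to combine three ingredients: the gradient-flow structure of \Cref{subsec:present_results}, the monotonicity of \Cref{rho_monotone_two_particle}, and the linear analysis of \Cref{sign_split_linear}. The argument then concludes with a center-stable manifold argument for the non-selected equilibria.

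\emph{Step 1: every trajectory converges to an equilibrium.} The vector field of \eqref{TF_ODE_sym} is $\operatorname{grad}_\beta E_\beta$ for the weighted metric. Hence $E_\beta$ is nondecreasing along trajectories, and it is constant only at equilibria. Moreover $E_\beta$ is real-analytic on the compact analytic manifold $(\mathbb S^{d-1})^2$, and the weights $Z_{\beta,i}$ are bounded above and below by positive constants. I therefore plan to apply the Łojasiewicz gradient inequality, in the form valid for gradient flows with uniformly equivalent metrics. This gives convergence of each trajectory to a single equilibrium $(x_1^\infty,x_2^\infty)$, and not merely $\omega$-limit points in the equilibrium set.

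\emph{Step 2: identify the possible limits.} Suppose $\rho(0)\in(-1,1)$. By \Cref{rho_monotone_two_particle}, $\rho$ is strictly decreasing, so $\rho(t)\to\rho_\infty<1$. I next show $\rho_\infty=-1$. An equilibrium with $\rho\in(-1,1)$ would have $\dot\rho=0$, which contradicts strict monotonicity applied to the constant trajectory sitting at that equilibrium. Thus $x_2^\infty=-x_1^\infty=:-u$. On $\mathcal M_{\mathrm{bbp}}$ the reduced equation $\dot u_k=u_k\tanh(\beta M)(\lambda_k-M)$ holds with $M<0$ since $V$ is negative definite. So $\tanh(\beta M)\neq 0$, and stationarity forces $\lambda_k=M$ on the support of $u$; that is, $u$ is a unit vector in some eigenspace $E_\mu$ of $V$.

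The remaining initial data are exceptional. The data with $\rho(0)=\pm1$ form the set $\{x_2=\pm x_1\}$, which is a null set. Among the limits, those with $\mu=\lambda_d$ are exactly $(\sigma e_d,-\sigma e_d)$, because $\lambda_d$ is simple. It therefore suffices to show that the set of initial data converging to $\mathcal E_\mu:=\{(u,-u):u\in E_\mu\cap\mathbb S^{d-1}\}$ has measure zero whenever $\mu>\lambda_d$.

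\emph{Step 3: instability of the non-selected equilibria.} Fix an equilibrium $(u,-u)$ with $u\in E_\mu$ and $\mu>\lambda_d$. I will take the transverse direction $e_d$ (which is orthogonal to $u$) and use the linearization \eqref{eq:linearized_sign_split} with $n_+=n_-=1$. In this case $a_\pm=a$ and $b_\pm=b$ with $a-b=\tanh(\beta\mu)$, and $\gamma_\pm=\mu\tanh(\beta\mu)>0$. The block $B_d$ has the antisymmetric mean mode $\overline y_+=-\overline y_-$ as an eigenvector, with eigenvalue
\[
\lambda_d(a-b)-\mu\tanh(\beta\mu)=(\lambda_d-\mu)\tanh(\beta\mu).
\]
This eigenvalue is positive, since both factors are negative. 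The equilibria with eigenvalue in the eigenspace $E_\mu$ need not be isolated, but the same computation applies at every point of $\mathcal E_\mu$, which is a compact submanifold (a sphere). Hence the Jacobian has a positive eigenvalue uniformly along $\mathcal E_\mu$.

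I then invoke the center-stable manifold theorem at each point of $\mathcal E_\mu$. The set of initial data whose forward orbit converges to that point and stays in a small neighbourhood lies in a local center-stable manifold of positive codimension. By compactness, $\mathcal E_\mu$ is covered by finitely many such neighbourhoods. Every trajectory converging to $\mathcal E_\mu$ eventually stays in one of them, so its initial datum lies in a countable union of backward images $\Phi_{-m}(W^{cs}_{\mathrm{loc}})$, $m\in\mathbb N$, of these manifolds. Since the time-$m$ flow maps $\Phi_{-m}$ are diffeomorphisms, each image is null, and so is the union.

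\emph{Main obstacle.} I expect the most delicate part to be Step 3's passage from a positive eigenvalue to a measure-zero basin at non-isolated equilibria. I would handle it with the "strict saddle" version of the center-stable manifold theorem (as used for gradient flows), applied pointwise on the compact set $\mathcal E_\mu$. A secondary point needing care is Step 1, where one must check that the Łojasiewicz argument tolerates the state-dependent metric; the uniform bounds on $Z_{\beta,i}$ should suffice for this.
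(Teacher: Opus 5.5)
Your proposal is correct, and its skeleton agrees with the paper's. Strict monotonicity of $\rho$ pushes limits onto $\mathcal M_{\mathrm{bbp}}$. The reduced equation with $M<0$ restricts them to the sets $\mathcal E_\mu$. The antisymmetric mean mode along $e_d$ supplies the unstable eigenvalue $(\lambda_d-\mu)\tanh(\beta\mu)>0$, the same one the paper uses.

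The difference is in the two analytic tools. The paper never proves that a trajectory converges to a single point. It applies LaSalle with $M$, together with connectedness of $\omega(z_0)$, to place the $\omega$-limit set inside one $\mathcal E_{\lambda_*}$; this set is a singleton only when $\lambda_*=\lambda_d$. It then rules out $\lambda_*>\lambda_d$ by treating $\mathcal E_\lambda$ as a normally hyperbolic manifold of equilibria, whose stable set captures every orbit that merely accumulates on it.

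You instead get pointwise convergence from the \L{}ojasiewicz inequality for the analytic energy $E_\beta$. This is legitimate, since $n e^{-\beta\|V\|_{\mathrm{op}}}\le Z_{\beta,i}\le n e^{\beta\|V\|_{\mathrm{op}}}$ yields the angle condition. Pointwise convergence is exactly what lets you replace the normally hyperbolic theory by three simpler pieces: the pointwise center-stable manifold theorem, a finite cover of $\mathcal E_\mu$, and countably many backward images $\Phi_{-m}$. You need only one positive eigenvalue at each point. You never have to check that the center directions of the linearization are exactly tangent to $\mathcal E_\mu$.

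A side benefit is that your Lyapunov function $E_\beta$ is monotone along the actual trajectory, whereas $M$ is monotone only for the reduced flow on $\mathcal M_{\mathrm{bbp}}$. The paper's inference $\omega(z_0)\subset\mathcal E$ therefore tacitly needs $M$ to be constant on $\omega(z_0)$. That does hold, precisely because $E_\beta$ is monotone and $E_\beta(u,-u)=\frac{2}{\beta}\cosh(\beta M)$ with $M<0$, which is the ingredient you use.

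Conversely, the paper's route does not rely on real-analyticity. Its price is the heavier normally hyperbolic machinery.
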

    \begin{proof}[Sketch of proof]
        By \Cref{rho_monotone_two_particle}, every nontrivial trajectory satisfies \(\rho(t)\to -1\), so every \(\omega\)-limit set is contained in \(\mathcal M_{\mathrm{bbp}}\). On \(\mathcal M_{\mathrm{bbp}}\), the dynamics reduces to the polarized system from \Cref{sec_reduced}. Moreover, in the two-particle setting, we introduce the Lyapunov function
        \begin{align*}
            L(u):=\langle u,Vu\rangle-\lambda_d,
        \end{align*}
        This is a shifted version of the quantity \(M\) appearing in \Cref{lem:M_monotone}. Since \(V\) is negative definite, \(M\) is monotone decreasing along the reduced sign-split dynamics, and therefore so is \(L\). The shift by \(\lambda_d\) makes  \(L\) nonnegative, with \(L(u)=0\) precisely on the eigenspace corresponding to the smallest eigenvalue \(\lambda_d\). A LaSalle-type argument then shows that every \(\omega\)-limit set of the reduced dynamics is contained in the largest invariant subset of \(\{\dot L=0\}\), which here coincides with the union of the sign-split equilibrium manifolds associated with the eigenspaces of \(V\).
    
        If the selected eigenspace is the one corresponding to the smallest eigenvalue \(\lambda_d\), then the simplicity of \(\lambda_d\) implies convergence to one of the two states \((e_d,-e_d)\) or \((-e_d,e_d)\). It therefore remains to exclude convergence to the higher-eigenvalue equilibrium manifolds for generic initial data. This is done by combining the sign-split linearization from \Cref{sign_split_linear} with the stable manifold theorem: every equilibrium manifold associated with an eigenvalue \(\lambda>\lambda_d\) is normally hyperbolic and possesses a nontrivial unstable direction, so its stable set has positive codimension and therefore measure zero. The detailed argument is given in \Cref{proof_thm65}.
    \end{proof}

Thus the only nongeneric alternatives are trajectories converging to sign-split equilibria associated with higher eigendirections. These exceptional initial data are contained in the union of the corresponding stable manifolds and hence form a measure-zero set.

Taken together, the results of this section provide a global counterpart to the reduced and local analyses developed earlier. In the positive-dominant regime, together with the one-sided cone condition, the dynamics selects a homogeneous pure mode through a forward-invariant cone mechanism. In the two-particle negative-definite regime, the dynamics first approaches the sign-split manifold and, outside a measure-zero exceptional set, selects the pure sign-split state associated with the smallest eigenvalue.
%============================================================================================================================
%============================================================================================================================
    \section{Numerical regimes of mode selection}\label{sec_numer}
    In this section, we complement the analytical results of \Cref{sec_global} with numerical experiments. Our goal is not only to illustrate the two mode-selection mechanisms proved above, but also to probe nearby regimes that are not covered by the present theory. Accordingly, we consider theorem-validation simulations, exploratory simulations, and a parameter-threshold visualization associated with the local stability theory. In the positive-dominant setting, we compare the one-sided cone regime from \Cref{thm:cone_selection_e1} and \Cref{cor:selection_e1} with initial configurations that do not satisfy the one-sided sign assumption. In the negative-definite setting, we first illustrate the two-particle dynamics described in \Cref{rho_monotone_two_particle,ae_selection_most_negative}, and then briefly examine the multi-particle regime \(n\geq 3\), where the asymptotic selection mechanism remains open. Finally, we return to the local stability criterion for sign-split pure states and visualize how the admissible stability window depends on \(\beta\) and on the population ratio \(r=n_+/n_-\).
%============================================================================================================================
    \subsection{Positive-dominant regime: one-sided and mixed-sign initial data}

    We first consider the positive-dominant regime
    \[
    \lambda_1>\max_{k\ge2}|\lambda_k|.
    \]
    In this regime, our analysis predicts selection of the dominant eigendirection under a one-sided sign condition on the first mode. More precisely, if the initial configuration satisfies
    \[
    c_{i,1}(0)\ge \delta>0, \quad \forall~i\in[n],~\mbox{for some }\delta\in(0,1),
    \]
    then \Cref{thm:cone_selection_e1} yields convergence to the homogeneous state \(e_1\).
    
    To compare these rigorous predictions with the particle dynamics, we consider two classes of initial data. The first consists of one-sided configurations satisfying the cone condition above. The second consists of mixed-sign initial data in the first mode, for which the current cone argument does not apply. This second class therefore probes the behavior beyond the proved one-sided regime.

    \begin{figure}
        \centering
        \begin{overpic}
            [scale=0.43]{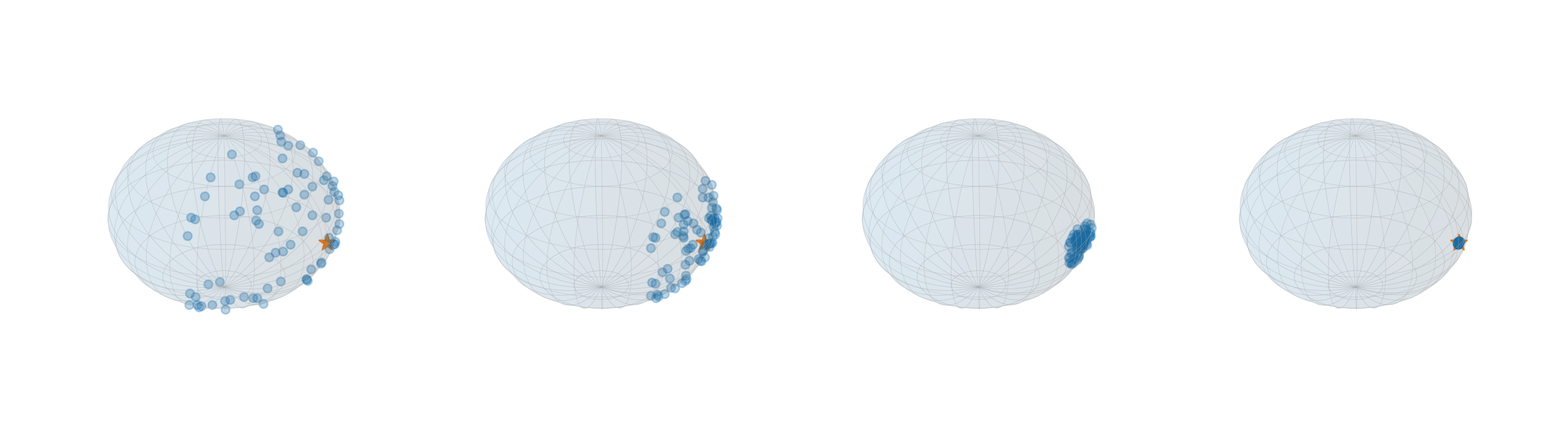}
            \put(11,23){\small$t=0.0$}
            \put(35,23){\small$t=0.5$}
            \put(59,23){\small$t=1.0$}
            \put(83,23){\small$t=2.0$}
        \end{overpic}

        \vspace{-1cm}
        
        \caption{Snapshots of the particle system in the positive-dominant regime at times \(t=0,0.5,1.0,2.0\). The initial data satisfy the one-sided condition \(c_{i,1}(0)\ge \delta>0\) for all \(i\in[n]\). The orange star indicates the dominant eigendirection \(e_1\) associated with the largest eigenvalue \(\lambda_1\). Here \(n=80\) and \(\beta=1\).}
        \label{fig:positive_snapshot}
    \end{figure}    
    We first examine the one-sided case. \Cref{fig:positive_snapshot} shows a representative trajectory. Starting from a dispersed configuration contained in the positive cone determined by the dominant eigendirection, the particles rapidly contract and form a single cluster near \(e_1\), marked by the orange star. This provides a direct numerical illustration of the alignment mechanism predicted by \Cref{thm:cone_selection_e1}.
    
    \begin{figure}
        \centering
        \begin{subfigure}{0.32\textwidth}
            \centering
            \begin{overpic}[width=\linewidth]{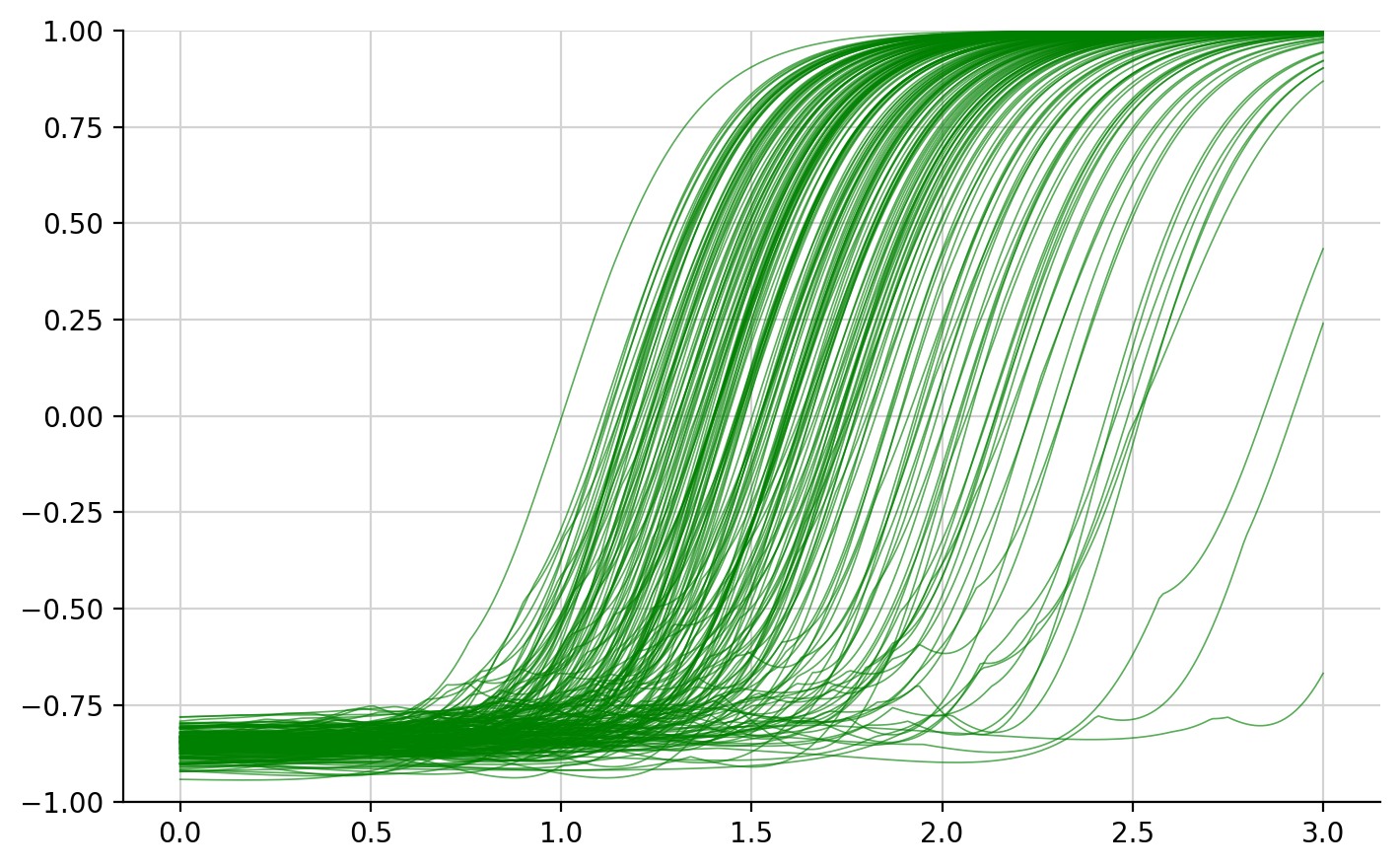}
            \put(47,-5){\small time}
            \put(-5,25){\rotatebox{90}{\small$\rho_{\mathrm{min}}$}}
            \put(44,65){\small $\beta=0.1$}
            \end{overpic}
        \end{subfigure}
        \begin{subfigure}{0.32\textwidth}
            \centering
            \begin{overpic}[width=\linewidth]{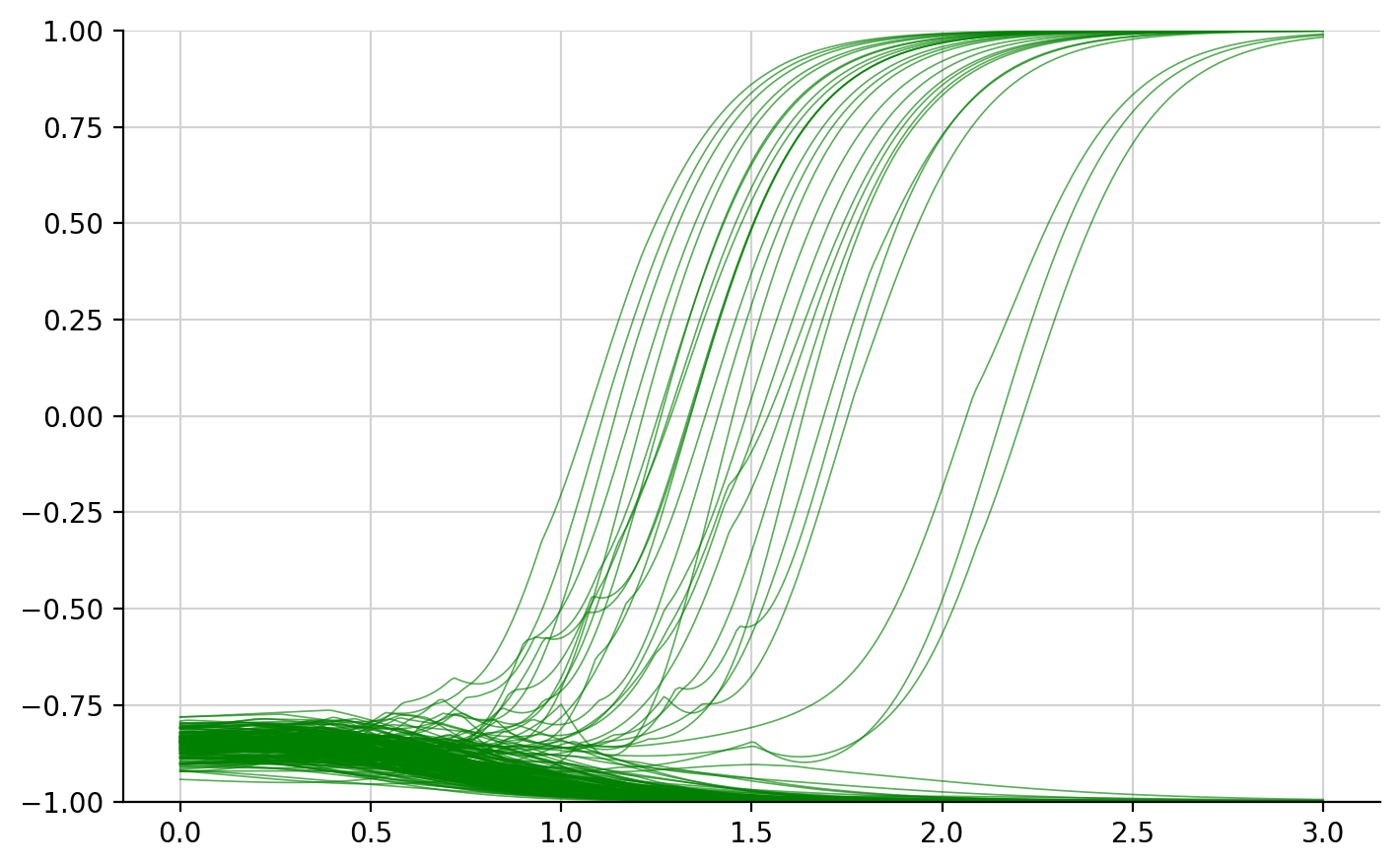}
            \put(44,65){\small $\beta=1.0$}
            \end{overpic}
        \end{subfigure}
        \begin{subfigure}{0.32\textwidth}
            \centering
            \begin{overpic}[width=\linewidth]{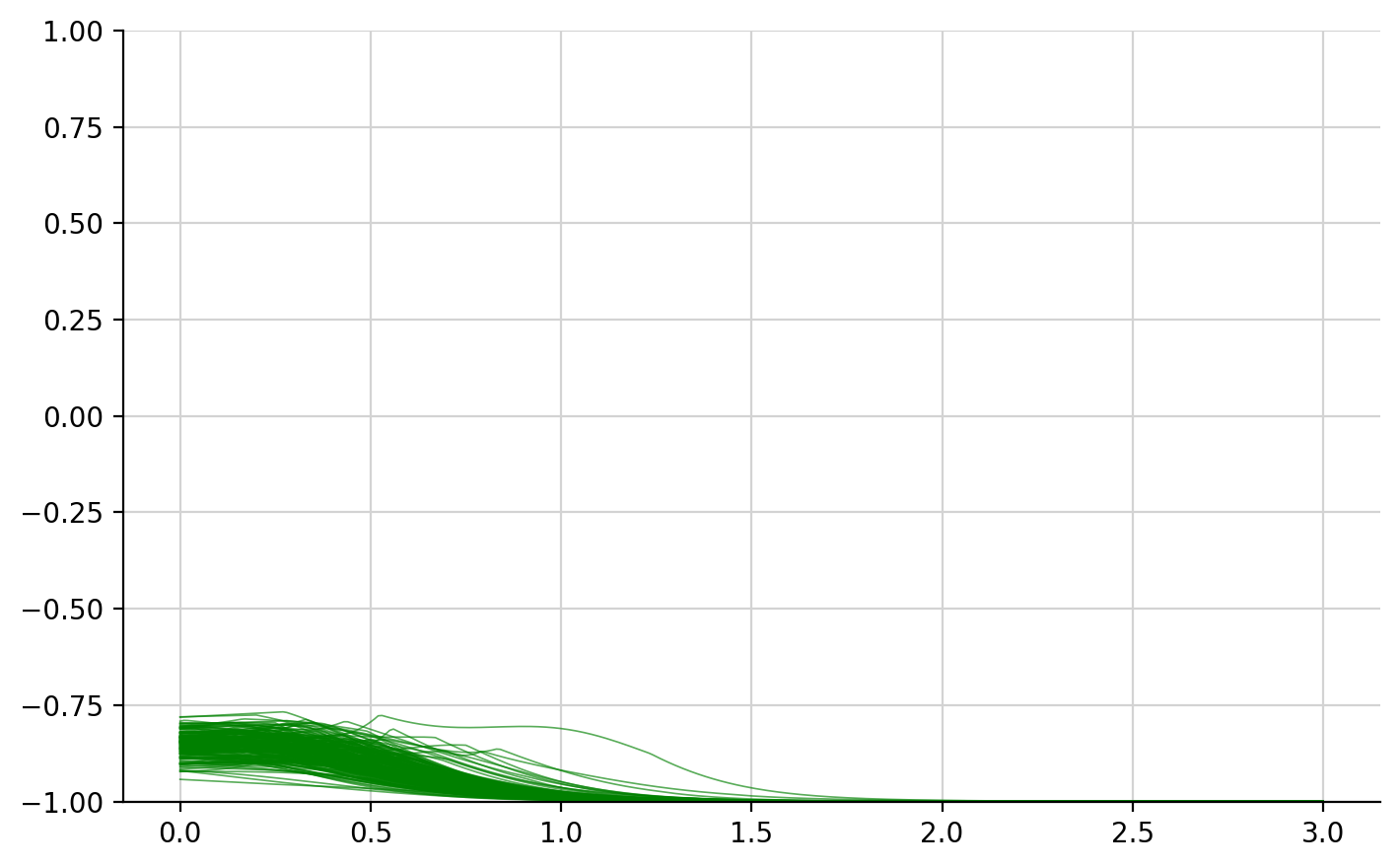}
            \put(44,65){\small $\beta=1.5$}
            \end{overpic}
        \end{subfigure}
        \caption{Time evolution of \(\rho_{\mathrm{min}}(t):=\min_{i,j\in[n]}\langle x_i(t),x_j(t)\rangle\) for a fixed matrix \(V\) with dominant positive eigenvalue \(\lambda_1\). The values $\rho_{\mathrm{min}}=1$ and $-1$ correspond to homogeneous alignment and to the presence of at least one antipodal pair, respectively. Each curve corresponds to one trial, and the same collection of initial data is used across the three panels. The particle number is \(n=80\), and each panel contains \(200\) trials.}
        \label{fig:positive_beta}
    \end{figure}
    We next turn to mixed-sign initial data and quantify the outcome through
    \[
    \rho_{\mathrm{min}}(t):=\min_{i,j\in[n]}\langle x_i(t),x_j(t)\rangle.
    \]
    If \(\rho_{\mathrm{min}}(t)\to 1\), then all pairwise inner products approach \(1\), indicating convergence toward consensus. In contrast, trajectories with \(\rho_{\mathrm{min}}(t)\) approaching \(-1\) exhibit a pronounced antipodal separation and therefore signal a sign-split-type behavior.
    
    To isolate the effect of the attention sharpness parameter, we fix the matrix \(V\) and reuse the same \(200\) initial configurations for all three values \(\beta=0.1,1.0,1.5\). The resulting trajectories of \(\rho_{\mathrm{min}}\) are shown in green in \Cref{fig:positive_beta}. For small attention sharpness \(\beta=0.1\), most trials evolve toward consensus. For the intermediate value \(\beta=1.0\), both outcomes are observed: many runs still approach consensus, but a non-negligible portion develop strong antipodal separation. For the larger value \(\beta=1.5\), the dynamics are largely dominated by polarization, with most trajectories driving \(\rho_{\mathrm{min}}(t)\) toward \(-1\). This example shows that, outside the one-sided cone regime, the observed long-time behavior depends strongly on \(\beta\) even when the spectral condition \(\lambda_1>\max_{k\ge2}|\lambda_k|\) is fixed.
%============================================================================================================================
    \subsection{Two-particle negative-definite regime: sign-split selection}

We next consider the two-particle case \(n=2\) under the negative-definite assumption
\[
    0>\lambda_1> \lambda_2> \cdots > \lambda_d .
\]
In this regime, \Cref{rho_monotone_two_particle} shows that the pairwise correlation
\[
    \rho(t):=\langle x_1(t),x_2(t)\rangle
\]
is strictly decreasing whenever \(-1<\rho(t)<1\). Accordingly, nontrivial trajectories are
driven toward the sign-split manifold
\[
    M_{\rm bbp}:=\{(u,-u):u\in \mathbb S^{d-1}\}.
\]
To illustrate this behavior numerically, we sample random initial conditions on
\(\mathbb S^{d-1}\times \mathbb S^{d-1}\) and evolve the two-particle system for a fixed
negative-definite diagonal matrix \(V\).

To track the approach to \(M_{\rm bbp}\), we monitor the correlation \(\rho(t)\). To identify the selected eigendirection, we also consider the averaged modal mass associated with the smallest eigenvalue,
\[
    m_d(t):=\frac{1}{2}\bigl(c_{1,d}(t)^2+c_{2,d}(t)^2\bigr).
\]
If the trajectory converges as in \Cref{ae_selection_most_negative}, namely
\[
    (x_1(t),x_2(t))\to (\sigma e_d,-\sigma e_d)
    \quad\text{for some }\sigma\in\{\pm1\},
\]
then necessarily \(\rho(t)\to -1\) and \(m_d(t)\to 1\).

The pairwise correlation \(\rho\) is plotted in blue in \Cref{fig:negative_two_particle}, and the observed monotone decay is consistent with \Cref{rho_monotone_two_particle,ae_selection_most_negative}. In \Cref{fig:negative_rho}, the sampled trajectories exhibit the monotone decay of \(\rho(t)\) toward \(-1\), illustrating the approach to the sign-split manifold. In \Cref{fig:negative_md}, the corresponding modal mass \(m_d(t)\) tends to \(1\), indicating selection of the eigendirection associated with the smallest eigenvalue. Taken together, these two observables provide a numerical illustration of two-particle sign-split selection mechanism developed in \Cref{subsec:two}.

\begin{figure}[t]
    \centering
    \begin{subfigure}{0.48\textwidth}
        \centering
        \begin{overpic}
            [scale=0.37]{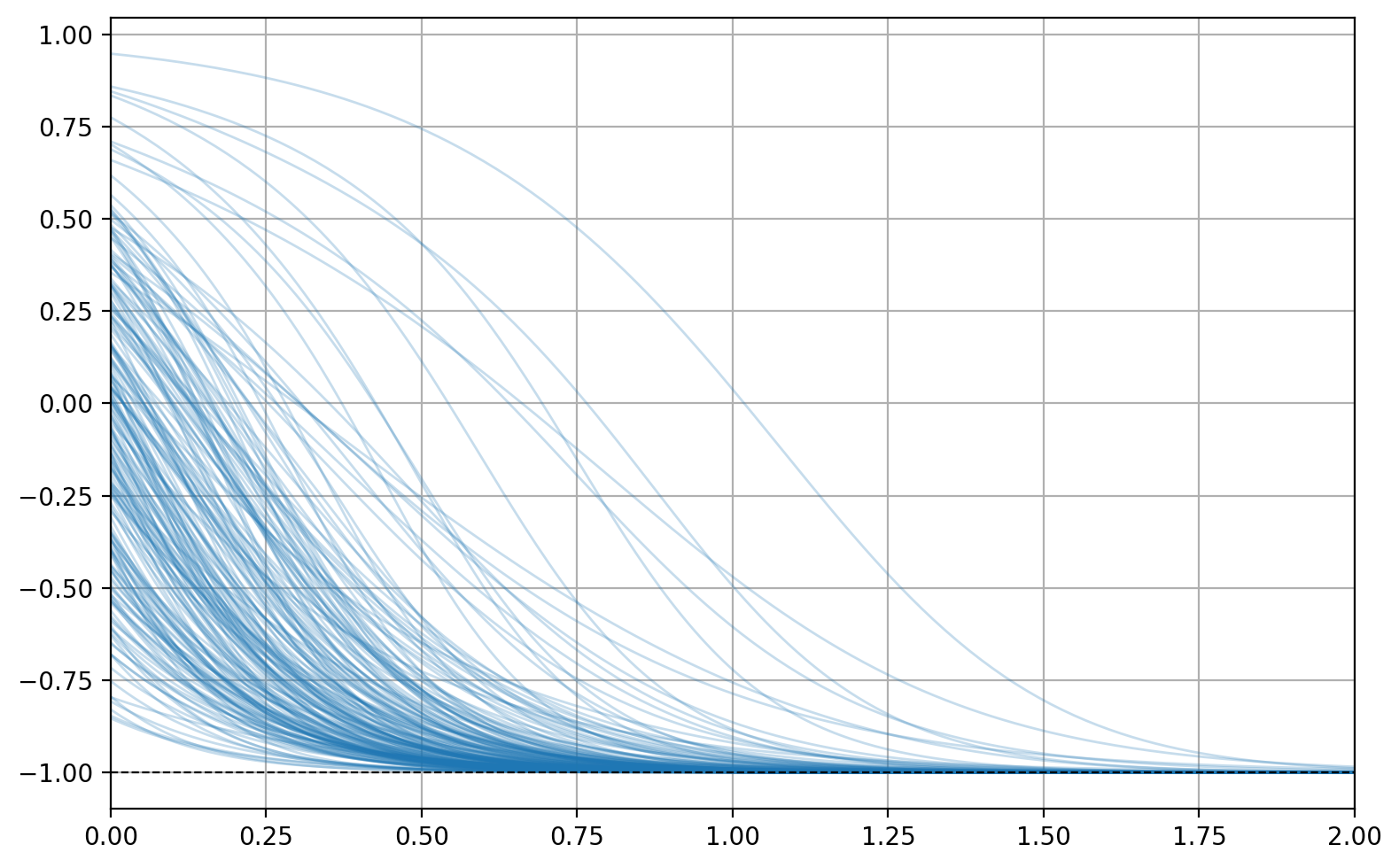}
            \put(47,-3){\small time}
            \put(-4,30){\rotatebox{90}{\small$\rho$}}
        \end{overpic}
        \caption{
        Time evolution of the pairwise correlation.
        }
        \label{fig:negative_rho}
    \end{subfigure}
    \hfill
    \begin{subfigure}{0.48\textwidth}
        \centering
        \begin{overpic}
            [scale=0.37]{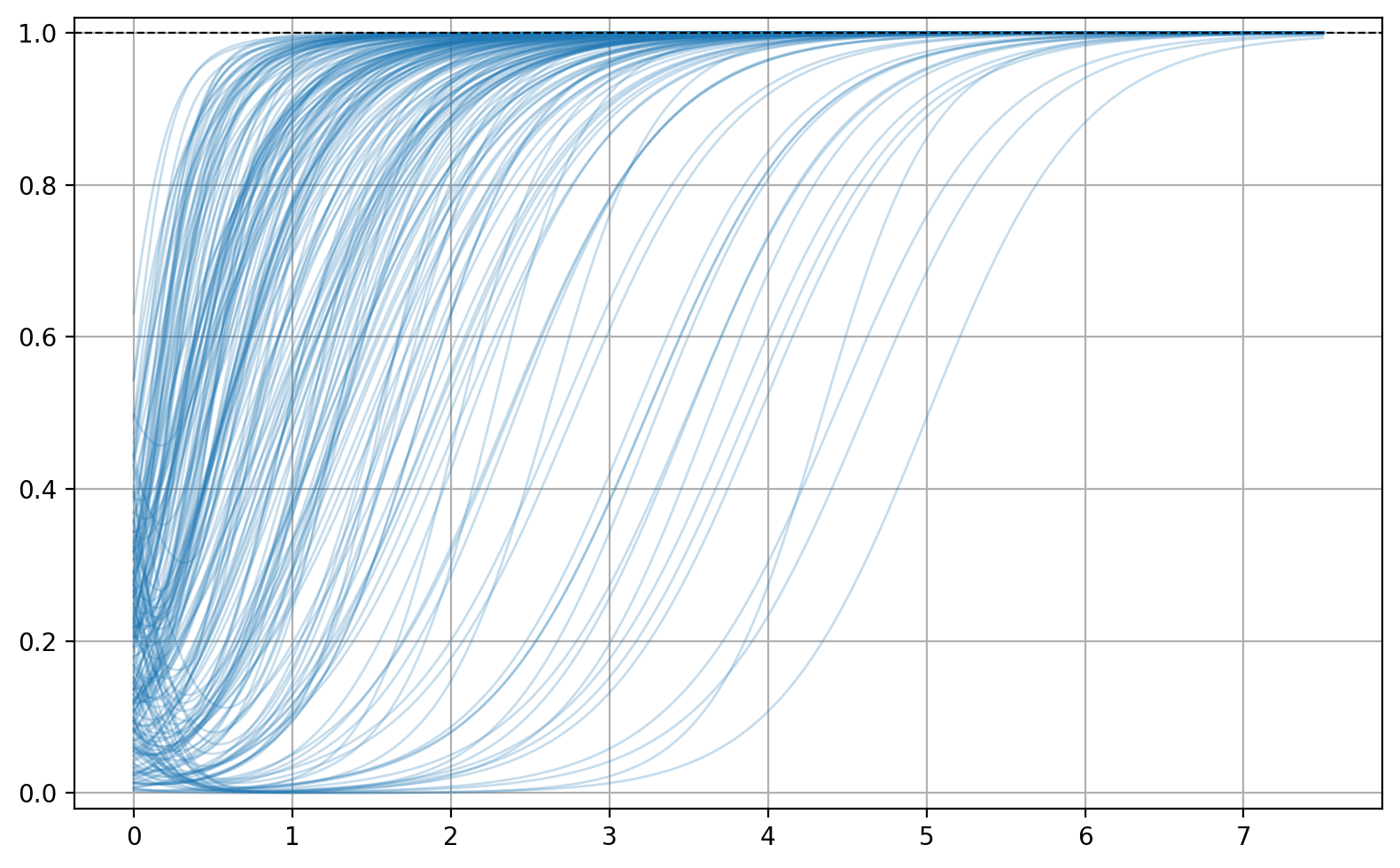}
            \put(47,-3){\small time}
            \put(-4,27){\rotatebox{90}{\small$m_d$}}
        \end{overpic}
        \caption{
        Time evolution of the averaged modal mass.
        }
        \label{fig:negative_md}
    \end{subfigure}
    \caption{
    Numerical illustration of the two-particle negative-definite regime.
    (a) shows the decay of the pairwise correlation toward \(-1\). (b) shows the concentration of the modal mass
    on the most negative eigendirection. In both panels, each curve corresponds to one random initial condition.
    }
    \label{fig:negative_two_particle}
\end{figure}
    \subsection{Negative-definite regime beyond two particles}

Here, we explore the negative-definite regime beyond the two-particle analysis in \Cref{subsec:two} by considering systems with \(n\ge 3\). Unlike the two-particle case, the global geometry is no longer described by a single pairwise correlation, since more than two particles cannot all be mutually antipodal. Thus, the monotonicity mechanism in \Cref{rho_monotone_two_particle} does not directly yield a global description for larger populations.

We consider a fixed negative-definite diagonal matrix \(V\) with
\[
    0>\lambda_1> \lambda_2> \cdots > \lambda_d,
\]
and sample random initial configurations on \((\mathbb S^{d-1})^n\). To monitor the
geometry of the population, we use the pairwise correlation observables
\[
    \rho_{\min}(t):=\min_{i<j}\langle x_i(t),x_j(t)\rangle,\quad\rho_{\max}(t):=\max_{i<j}\langle x_i(t),x_j(t)\rangle,
\]
and
\[
    \rho_{\mathrm{abs}}(t):=\frac{2}{n(n-1)}\sum_{i<j}|\langle x_i(t),x_j(t)\rangle|.
\]
To identify the selected spectral direction, we also track the averaged modal masses
\[
    m_k(t):=\frac{1}{n}\sum_{i=1}^n c_{i,k}(t)^2,\quad\forall~k\in[d].
\]

As shown in Figure~\ref{fig:negative_multi}, the numerical behavior suggests a possible multi-particle analogue of the two-particle picture from Lemma~\ref{rho_monotone_two_particle} and Theorem~\ref{ae_selection_most_negative}. In particular, the pairwise observables indicate that the population approaches an approximately bipolar configuration: particles concentrate near two antipodal directions, while same-group pairs remain positively correlated. At the same time, the modal masses show concentration on the eigendirection associated with the smallest eigenvalue. Thus, in this experiment, the negative-definite multi-particle system exhibits bipolar collapse along the most negative eigendirection. This numerical observation suggests that the sign-split selection mechanism proved in the two-particle case may persist for larger populations. However, in the absence of a global selection theorem for \(n\ge 3\), this should be regarded as exploratory evidence rather than a rigorous conclusion.

\begin{figure}[t]
    \centering
    \begin{subfigure}{0.48\textwidth}
        \centering
        \begin{overpic}
            [scale=0.37]{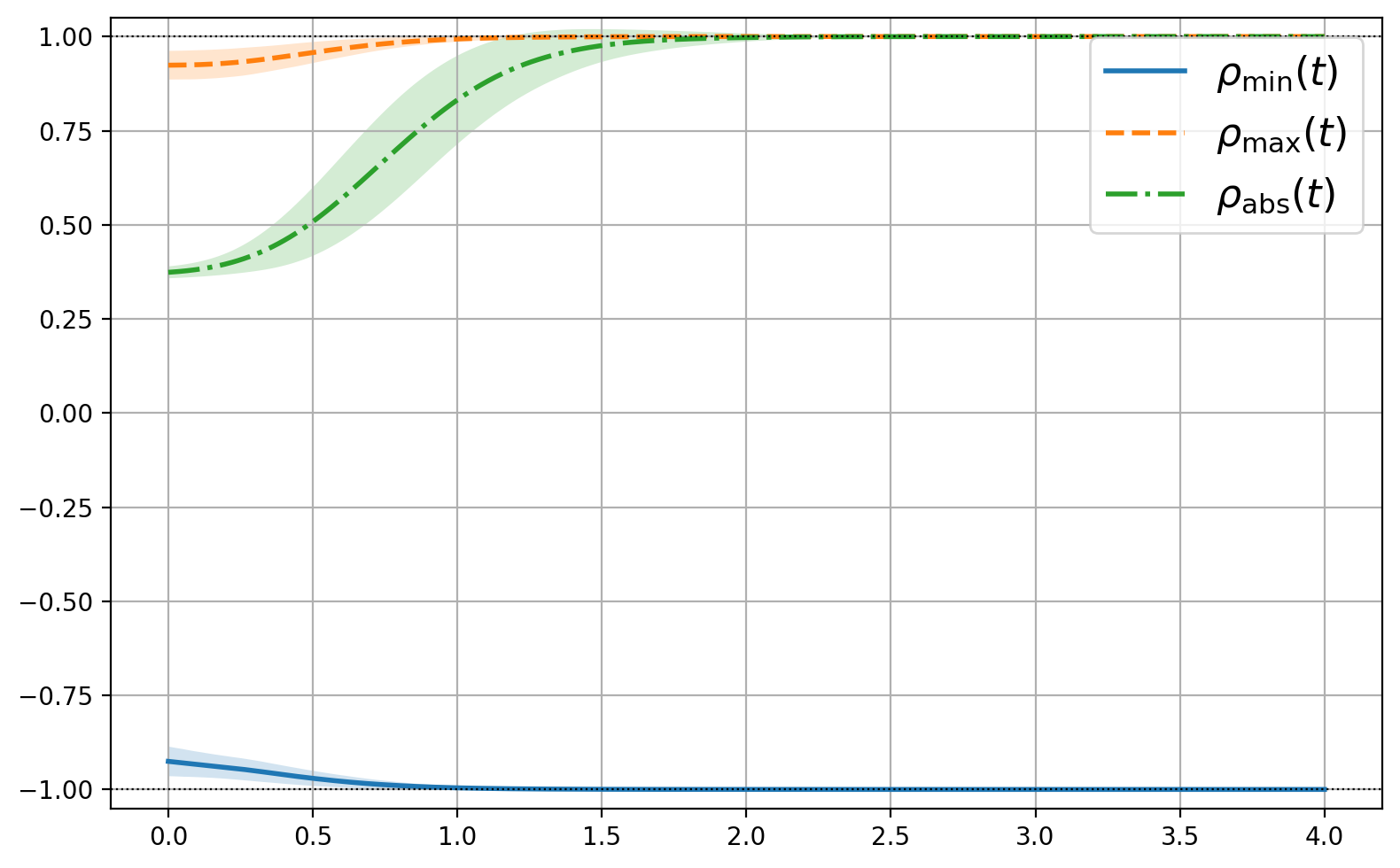}
            \put(47,-1.5){\small time}
        \end{overpic}
        \caption{
        Pairwise correlation observables.
        }
        \label{fig:negative_multi_pairwise}
    \end{subfigure}
    \hfill
    \begin{subfigure}{0.48\textwidth}
        \centering
        \begin{overpic}
            [scale=0.37]{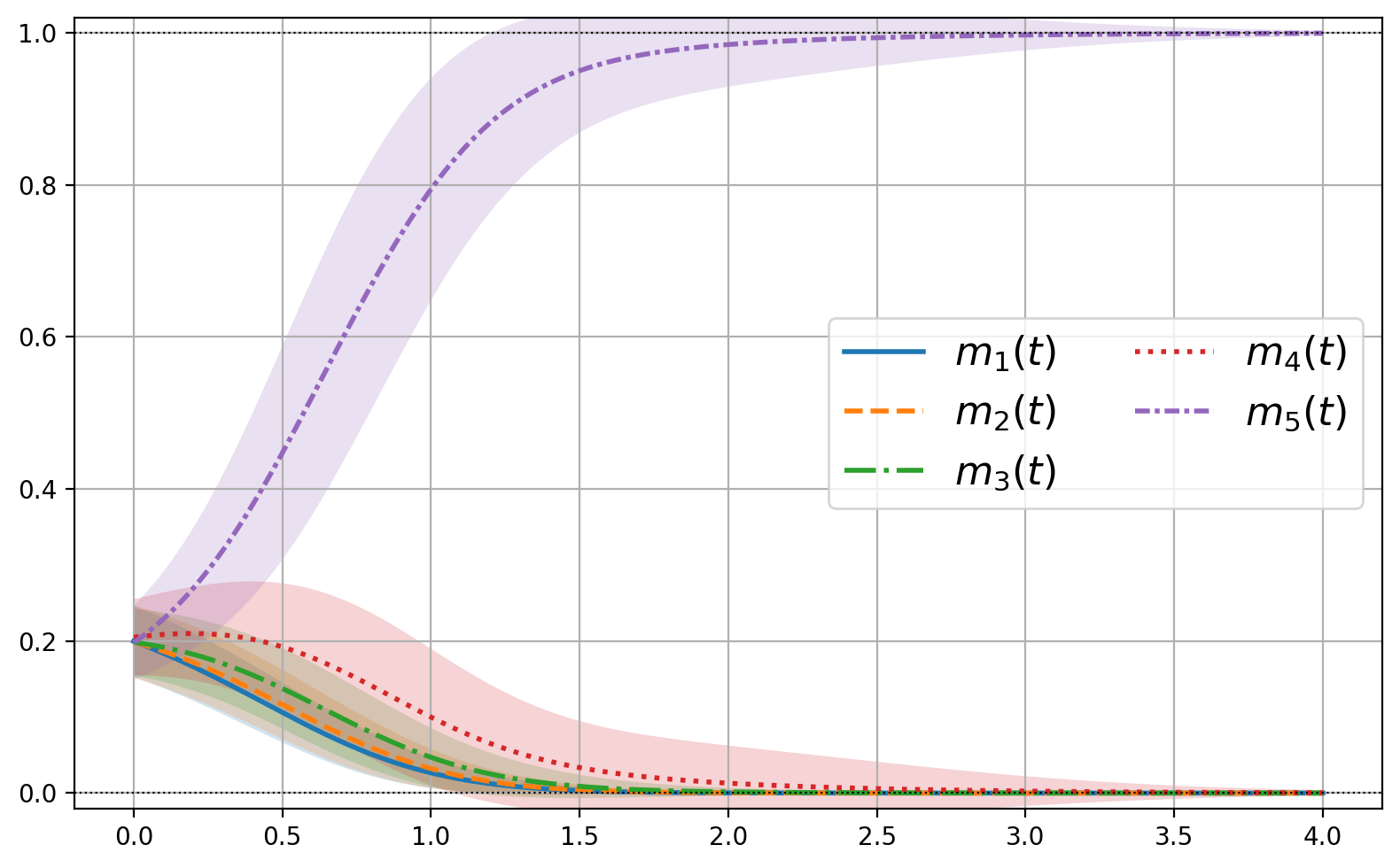}
            \put(47,-1.5){\small time}
        \end{overpic}
        \caption{
        Averaged modal masses.
        }
        \label{fig:negative_multi_modal_masses}
    \end{subfigure}
    \caption{
    Monte Carlo observables for the negative-definite regime with $n=20$ on $5$-dimensional space. The curves show empirical means over random initial configurations, and the shaded regions indicate one standard deviation. (a) shows that the population approaches an approximately bipolar configuration, as reflected by \(\rho_{\min}(t)\), \(\rho_{\max}(t)\), and \(\rho_{\mathrm{abs}}(t)\). (b) shows that the averaged modal mass concentrates on the eigendirection corresponding to the smallest eigenvalue \(\lambda_d\).
    }
    \label{fig:negative_multi}
\end{figure}

    \subsection{Stability thresholds for sign-split pure states}
    We now return to the local stability theory of sign-split pure states and visualize the
\(\beta\)-dependent boundary at which such polarized states become linearly admissible.
The purpose of this subsection is only to illustrate the threshold in
\Cref{rem:beta_dependent_stability}, rather than to provide a full parameter study.
For this reason, we focus on the representative case \(\lambda_p=1\).

In this case, the upper stability threshold for the transverse eigenvalues is
\[
    \lambda_p \sigma(c_\beta,r)
    =
    \sigma(e^{2\beta},r),
\]
which we plot as a function of the attention sharpness parameter \(\beta\), for several
values of \(r=n_+/n_-\). According to \Cref{rem:beta_dependent_stability}, the
sign-split equilibrium supported on \(e_p\) can be linearly stable only in the admissible
regime
\[
    \beta>\frac{1}{2}|\ln r|.
\]
At the endpoint \(\beta=\frac{1}{2}|\ln r|\), one has
\(\sigma(e^{2\beta},r)=0\). As \(\beta\) increases beyond this endpoint, the upper
stability threshold becomes positive and increases from \(0\).

The same curve can also be used to read the upper stability threshold for the case
\(\lambda_p=-1\). In that case, however, the admissible interval is bounded below by
\(-1\), so that
\[
    -1<\lambda_k<\sigma(e^{2\beta},r).
\]
For general values of \(\lambda_p\), the threshold changes nonlinearly with \(\lambda_p\),
since \(c_\beta=e^{2\beta\lambda_p}\).
    
    Figure~\ref{fig:beta_threshold} shows this threshold for several choices of $r$.
    The empty circles mark the points
    \[
        \beta=\frac12|\ln r|,
        \quad
        \lambda_p\sigma(c_\beta,r)=0,
    \]
    where the stability window first becomes nonempty. As $\beta$ increases, the upper
    bound increases and approaches $\lambda_p=1$. Thus, for larger attention sharpness, the
    local stability condition allows a wider range of transverse eigenvalues $\lambda_k$ below
    the selected positive mode. The dependence on $r$ reflects the imbalance between the two
    sign groups: the more unbalanced the split is, the larger the value of $\beta$ required before
    the sign-split state can become stable.
    
    \begin{figure}[t]
        \centering
        \begin{overpic}
            [scale=0.5]{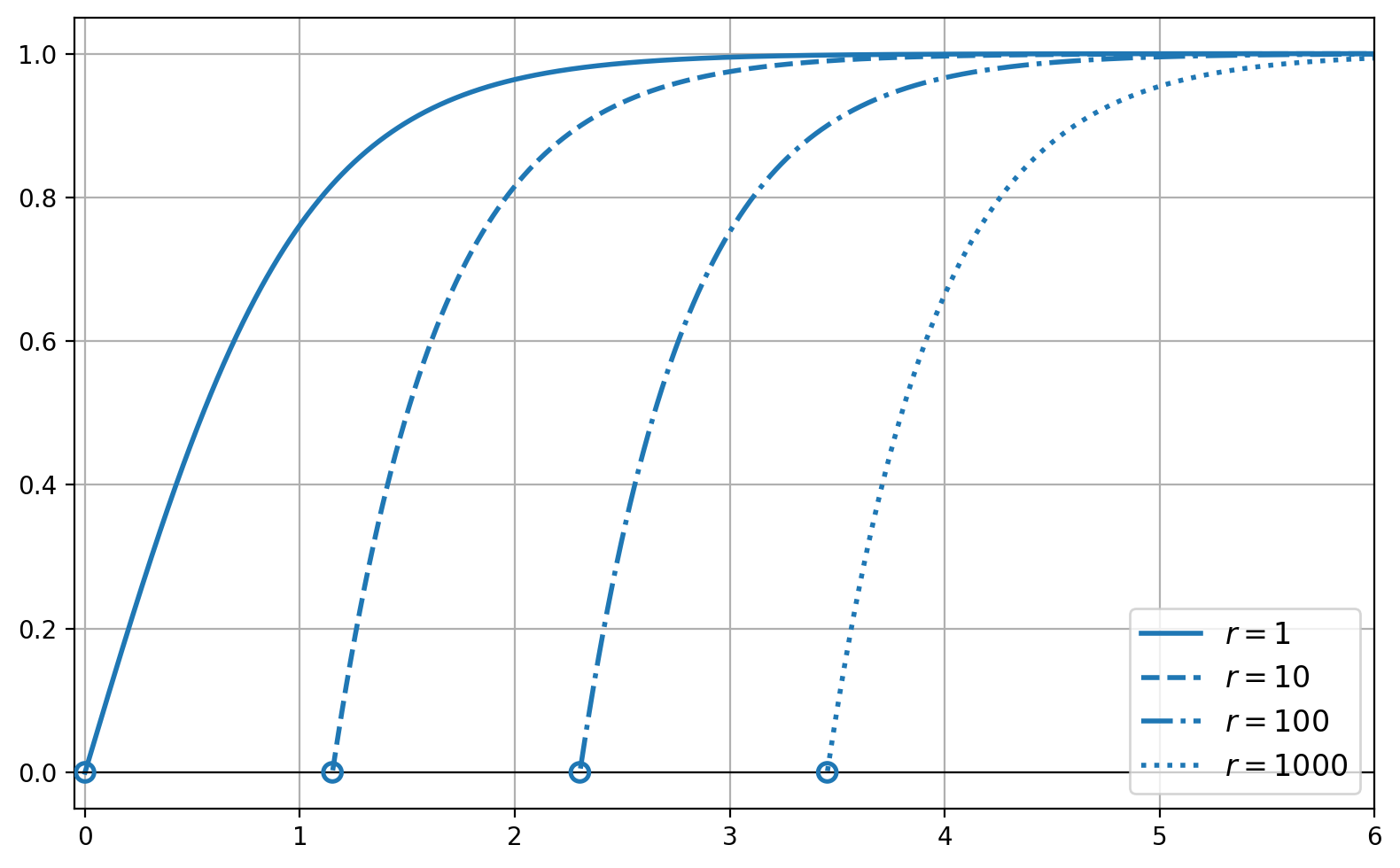}
            \put(47,-3){\small time}
            \put(-4,24){\rotatebox{90}{\small$\lambda_p\sigma(c_\beta,r)$}}
        \end{overpic}
        \caption{
        Representative \(\beta\)-dependent stability boundary for sign-split pure states,
shown for \(\lambda_p=1\). The plotted curve is the upper bound
\(\lambda_p\sigma(c_\beta,r)=\sigma(e^{2\beta},r)\). Different line styles correspond to
        different population ratios $r=n_+/n_-$. The empty circles indicate the threshold
        points $\beta=\frac12|\ln r|$, where the admissible stability regime begins. For \(r=1\), the threshold point at \(\beta=0\) is interpreted as a limiting value.
        }
        \label{fig:beta_threshold}
    \end{figure}
%============================================================================================================================
    \section{Conclusion}\label{sec_conclusion}
    In this paper, we studied a symmetric finite-particle self-attention dynamics on the sphere from the viewpoint of spectral mode selection. Under the assumption
    \[Q^\top K = V = V^\top,\]
    the system admits both a variational structure and an exact modal reformulation, which together make it possible to analyze how the spectrum of \(V\) governs the long-time behavior of the flow. At the level of reduced dynamics, the consensus and balanced bipolar manifolds reveal two qualitatively different selection mechanisms, corresponding respectively to homogeneous alignment and sign-split polarization. These mechanisms are reflected in the full system through the local stability theory of pure-mode equilibria and the global selection results proved here in the positive-dominant and two-particle negative-definite regimes.

    From a broader perspective, the present work provides a rigorous finite-particle analysis of a Transformer-inspired self-attention flow in a symmetric setting. Rather than addressing training dynamics or architectural expressivity, our focus has been on the intrinsic nonlinear dynamics generated by attention-type interactions and on the asymptotic patterns selected by the underlying spectrum. In this sense, the symmetric model studied here offers a mathematically tractable baseline in which mechanisms of alignment, polarization, and spectral competition can be understood explicitly.

    A natural next step is to investigate asymmetric perturbations that break the exact self-adjoint structure underlying the present analysis. From this viewpoint, an important question is which parts of the mode-selection picture established here remain robust under small asymmetry, and how the transition from the symmetric regime to genuinely non-symmetric self-attention dynamics alters the alignment and polarization mechanisms identified in this paper. It would also be of interest to understand whether the finite-particle spectral mechanisms found here persist for larger particle systems or admit meaningful counterparts in related mean-field descriptions. The explicit \(\beta\)- and imbalance-dependent thresholds obtained for sign-split equilibria also suggest a possible diagnostic framework for detecting transitions between alignment-dominated and polarization-dominated regimes. We hope that the present work can serve as a useful starting point for future studies of self-attention dynamics beyond the symmetric setting.
%============================================================================================================================
%============================================================================================================================
    \section*{Acknowledgements}
    J. Yoon would like to thank the Alexander von Humboldt Stiftung for support via a postdoctoral research fellowship.

%============================================================================================================================
%============================================================================================================================

%============================================================================================================================%============================================================================================================================
    \appendix
    \section{Proofs for \Cref{sec_stab}}\label{app_proof_5}

This appendix contains the technical proofs omitted from \Cref{sec_stab}.

%============================================================================================================================

\subsection{Linearization around pure-mode equilibria}\label{app_subsec:linear}

Here, we derive the linearized systems used in \Cref{subsec:stab_homo} and \Cref{sign_split_linear}.
We work around a general pure-mode equilibrium
\[
x_i^*=s_i e_p,
\quad s_i\in\{\pm1\},
\quad i\in[n].
\]
The homogeneous case in \Cref{subsec:stab_homo} corresponds to the special choice
\(s_i\equiv 1\), while the genuinely sign-split case in \Cref{sign_split_linear}
corresponds to a nonconstant sign pattern.

\begin{lemma}[Linearization around a pure-mode equilibrium]\label{lem:app_pure_mode_linearization}
Let
\[
x_i^*=s_i e_p,
\quad s_i\in\{\pm1\},
\]
be a pure-mode equilibrium of \eqref{TF_ODE_sym}. Consider perturbations of the form
\begin{align}\label{x_pert}
    x_i=\frac{x_i^*+y_i}{\|x_i^*+y_i\|},\quad \langle y_i,x_i^*\rangle=0,
\end{align}
and set
\[
|y|:=\max_{1\le m\le n}\|y_m\|.
\]
Then the linearized tangent system is
\begin{align}\label{eq:app_general_pure_linearization}
\dot y_i
=
-\gamma_i y_i+\sum_{j=1}^n K_{ij}^* V y_j,
\quad i\in[n],
\end{align}
where
\[
K_{ij}^*:=K_{ij}(x^*),
\quad
\gamma_i:=\lambda_p s_i\sum_{j=1}^n K_{ij}^* s_j.
\]
\end{lemma}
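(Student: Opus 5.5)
The plan is to expand the vector field of \eqref{TF_ODE_sym} to first order in $y=(y_1,\dots,y_n)$ around $x^*=(s_ie_p)_i$. Two structural facts will be used throughout. First, $Ve_p=\lambda_pe_p$. Second, $V$ leaves $e_p^\perp$ invariant, so $\langle e_p,Vy_j\rangle=0$ whenever $\langle y_j,e_p\rangle=0$. The perturbation constraint $\langle y_i,x_i^*\rangle=0$ is the same as $\langle y_i,e_p\rangle=0$.

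\textbf{Step 1 (chart).} Since $y_i\perp x_i^*$, we have $\|x_i^*+y_i\|=(1+\|y_i\|^2)^{1/2}=1+O(|y|^2)$, and hence
\[
x_i=x_i^*+y_i+O(|y|^2).
\]
The map $y_i\mapsto x_i$ is inverted locally by $y_i=x_i/\langle x_i,x_i^*\rangle-x_i^*$, which is a gnomonic chart on the open hemisphere around $x_i^*$. Differentiating this inverse gives $\dot y_i=\dot x_i+O(|y|\,|\dot x|)$. Since $\dot x$ itself will turn out to be $O(|y|)$, the linear part of $\dot y_i$ equals the linear part of the vector field $F_i(x)$, and it suffices to linearize $F_i$.

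\textbf{Step 2 (attention weights).} Substituting Step 1 into the exponent gives
\[
\beta\langle x_i,Vx_j\rangle=\beta s_is_j\lambda_p+\beta\bigl(s_i\langle e_p,Vy_j\rangle+s_j\langle y_i,Ve_p\rangle\bigr)+O(|y|^2).
\]
Both first-order terms vanish, because $\langle e_p,Vy_j\rangle=0$ and $\langle y_i,Ve_p\rangle=\lambda_p\langle y_i,e_p\rangle=0$. Hence the exponent, and therefore the softmax weight, satisfies $K_{ij}=K_{ij}^*+O(|y|^2)$. This is the key simplification: the weights contribute nothing at linear order.

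\textbf{Step 3 (projected field).} Set $A_i:=\sum_jK_{ij}Vx_j$. By Steps 1 and 2,
\[
A_i=\lambda_pe_p\sum_jK^*_{ij}s_j+\sum_jK^*_{ij}Vy_j+O(|y|^2).
\]
Take the inner product with $x_i=s_ie_p+y_i+O(|y|^2)$. The term $s_i\langle e_p,\sum_jK^*_{ij}Vy_j\rangle$ vanishes by invariance of $e_p^\perp$, and $\langle y_i,e_p\rangle=0$. This gives $\langle x_i,A_i\rangle=\gamma_i+O(|y|^2)$. Therefore
\[
F_i=A_i-\langle x_i,A_i\rangle x_i=\lambda_pe_p\sum_jK^*_{ij}s_j+\sum_jK^*_{ij}Vy_j-\gamma_i(s_ie_p+y_i)+O(|y|^2).
\]
Since $s_i^2=1$, we have $\gamma_is_ie_p=\lambda_pe_p\sum_jK^*_{ij}s_j$, so the $e_p$-components cancel. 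This cancellation is exactly the equilibrium identity, and it leaves $F_i=-\gamma_iy_i+\sum_jK^*_{ij}Vy_j+O(|y|^2)$. Combined with Step 1, this yields \eqref{eq:app_general_pure_linearization}.

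As a consistency check, the right-hand side lies in $e_p^\perp$, so the linear flow preserves the constraint $\langle y_i,e_p\rangle=0$. The main thing to get right is Step 2 together with the $O(|y|^2)$ bookkeeping in the chart of Step 1. The rest is routine, and it relies only on the orthogonality $y_i\perp e_p$ and the $V$-invariance of $e_p^\perp$.
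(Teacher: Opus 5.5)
Your proposal is correct and follows essentially the same route as the paper. You expand $x_i=x_i^*+y_i+O(|y|^2)$, note that the first-order corrections to the attention exponents vanish because $y_i\perp e_p$ and $V$ preserves $e_p^\perp$ (so $K_{ij}=K_{ij}^*+O(|y|^2)$), and then expand $\mathcal A_i$ and the projection to obtain $F_i=-\gamma_i y_i+\sum_j K_{ij}^*Vy_j+O(|y|^2)$. Your Step 1 and your explicit cancellation of the $e_p$-components via the equilibrium identity are details the paper leaves implicit, so your version is slightly more complete. In Step 1 you use the gnomonic chart to show that $\dot y_i$ agrees with $F_i(x)$ up to $O(|y|^2)$.
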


\begin{proof}
From \eqref{x_pert}, we have
\[
x_i=x_i^*+y_i+O(|y|^2).
\]
Moreover, since \(x_i^*=\pm e_p\), the tangent condition implies that \(y_i\) has no
\(e_p\)-component.

Recall that the vector field is given by
\[
F_i(x)=P^\perp_{x_i}\left(\sum_{j=1}^n K_{ij}(x)Vx_j\right),
\quad
K_{ij}(x):=
\frac{\exp\bigl(\beta\langle x_i,Vx_j\rangle\bigr)}
{\sum_{\ell=1}^n \exp\bigl(\beta\langle x_i,Vx_\ell\rangle\bigr)}.
\]
As in \Cref{sec_prelim}, we write
\[
\mathcal A_i(x):=\sum_{j=1}^n K_{ij}(x)Vx_j,
\]
so that
\[
F_i(x)=\mathcal A_i(x)-\langle x_i,\mathcal A_i(x)\rangle x_i.
\]

We first expand the softmax coefficients. Since
\[
Vx_j^*=\lambda_p s_j e_p,
\]
one has
\[
\langle x_i,Vx_j\rangle
=
\langle x_i^*+y_i,\;V(x_j^*+y_j)\rangle+O(|y|^2).
\]
Using \(y_i\perp e_p\), \(y_j\perp e_p\), and the symmetry of \(V\), we obtain
\[
\langle y_i,Vx_j^*\rangle
=
\lambda_p s_j\langle y_i,e_p\rangle
=0,
\]
and
\[
\langle x_i^*,Vy_j\rangle
=
s_i\langle e_p,Vy_j\rangle
=
s_i\langle Ve_p,y_j\rangle
=
\lambda_p s_i\langle e_p,y_j\rangle
=0.
\]
Therefore,
\[
\langle x_i,Vx_j\rangle
=
\langle x_i^*,Vx_j^*\rangle+O(|y|^2)
=
\lambda_p s_i s_j+O(|y|^2).
\]
Hence
\[
K_{ij}(x)=K_{ij}^*+O(|y|^2),
\quad
K_{ij}^*:=K_{ij}(x^*).
\]

Next, using again \(Vx_j^*=\lambda_p s_j e_p\), we find
\[
\mathcal A_i(x)
=
\sum_{j=1}^n K_{ij}^*\bigl(\lambda_p s_j e_p+Vy_j\bigr)+O(|y|^2)
=
\lambda_p\left(\sum_{j=1}^n K_{ij}^*s_j\right)e_p
+
\sum_{j=1}^n K_{ij}^*Vy_j
+
O(|y|^2).
\]
It follows that
\begin{align*}
\langle x_i,\mathcal A_i(x)\rangle
&=
\Bigl\langle s_i e_p+y_i,\,
\lambda_p\left(\sum_{j=1}^n K_{ij}^*s_j\right)e_p
+
\sum_{j=1}^n K_{ij}^*Vy_j
\Bigr\rangle
+
O(|y|^2)\\
&=
\lambda_p s_i\sum_{j=1}^n K_{ij}^*s_j
+
O(|y|^2),
\end{align*}
where we used again \(y_i\perp e_p\) and \(\langle e_p,Vy_j\rangle=0\).
Thus, setting
\[
\gamma_i:=\lambda_p s_i\sum_{j=1}^n K_{ij}^*s_j,
\]
we obtain
\[
F_i(x)
=
-\gamma_i y_i+\sum_{j=1}^n K_{ij}^*Vy_j+O(|y|^2).
\]
This yields the linearized tangent system \eqref{eq:app_general_pure_linearization}.
\end{proof}

We now record the two specializations used in \Cref{sec_stab}.

\paragraph{Homogeneous pure states (\Cref{subsec:stab_homo}).}
Assume \(s_i\equiv 1\), so that \(x_i^*=e_p\) for all \(i\in[n]\). Then
\[
K_{ij}^*=\frac1n,
\quad
\gamma_i=\lambda_p,
\]
and \eqref{eq:app_general_pure_linearization} reduces to
\[
\dot y_i
=
-\lambda_p y_i+\frac1n\sum_{j=1}^n V y_j
=
V\bar y-\lambda_p y_i,
\quad
\bar y:=\frac1n\sum_{j=1}^n y_j.
\]
Expanding
\[
y_i=\sum_{k\ne p} y_{i,k} e_k,
\quad
\bar y=\sum_{k\ne p} y_k e_k,
\quad
y_k:=\frac1n\sum_{j=1}^n y_{j,k},
\]
we obtain
\[
\dot y_{i,k}=\lambda_k y_k-\lambda_p y_{i,k},
\quad k\ne p,
\]
which is precisely \eqref{eq:linearized_homogeneous}. Decomposing
\[
y_{i,k}=y_k+\widetilde y_{i,k},
\quad
\sum_{i=1}^n \widetilde y_{i,k}=0,
\]
yields
\[
\dot y_k=(\lambda_k-\lambda_p)y_k,
\quad
\dot{\widetilde y}_{i,k}=-\lambda_p\widetilde y_{i,k},
\]
that is, \eqref{mean_fluc_ode}.

\paragraph{Sign-split pure states (\Cref{sign_split_linear}).}
Assume that the sign pattern is nonconstant, and use the notation \(S_\pm\), \(n_\pm\),
\(a_\pm\), \(b_\pm\), and \(\gamma_\pm\) introduced in \Cref{sign_split_linear}. Since
\[
\langle x_i^*,Vx_j^*\rangle=\lambda_p s_i s_j,
\]
the coefficients \(K_{ij}^*\) are constant on the blocks determined by \(S_+\cup S_-\), namely
\[
K_{ij}^*=
\begin{dcases}
a_+, & i\in S_+,\ j\in S_+,\\
b_+, & i\in S_+,\ j\in S_-,\\
b_-, & i\in S_-,\ j\in S_+,\\
a_-, & i\in S_-,\ j\in S_-.
\end{dcases}
\]
Since each perturbation is tangent to the sphere at \(x_i^*=\pm e_p\), we may write
\[
y_i=\sum_{k\ne p} y_{i,k}e_k.
\]
Substituting this expansion into \eqref{eq:app_general_pure_linearization} and using
\(Ve_k=\lambda_k e_k\), we obtain, for each fixed \(k\ne p\),
\begin{align}\label{eq:app_mode_system_sign_split}
\dot y_{i,k}
=
-\gamma_i y_{i,k}
+\lambda_k\sum_{j=1}^n K_{ij}^* y_{j,k}.
\end{align}
Equivalently,
\[
\dot y_{i,k}=
\begin{dcases}
-\gamma_+\,y_{i,k}
+\lambda_k\Bigl(a_+\sum_{j\in S_+}y_{j,k}+b_+\sum_{j\in S_-}y_{j,k}\Bigr),
& i\in S_+,\\[2mm]
-\gamma_-\,y_{i,k}
+\lambda_k\Bigl(b_-\sum_{j\in S_+}y_{j,k}+a_-\sum_{j\in S_-}y_{j,k}\Bigr),
& i\in S_-,
\end{dcases}
\]
which is precisely \eqref{eq:linearized_sign_split}.

Introducing the group averages and fluctuations as in \Cref{sign_split_linear}, one obtains
\eqref{tildey_dot} by subtracting the corresponding group mean on each sign group, while
\eqref{bary_dot} follows by averaging \eqref{eq:linearized_sign_split} over \(S_+\) and \(S_-\),
respectively.

%============================================================================================================================

\subsection{Invariant decomposition for the sign-split linearization}\label{app_subsec:split_decomp}

In this subsection, we fix \(k\ne p\) and consider the modewise linear operator
\(L_k:\mathbb R^n\to\mathbb R^n\) associated with \eqref{eq:app_mode_system_sign_split},
defined by
\[
(L_k z)_i
=
-\gamma_i z_i+\lambda_k\sum_{j=1}^n K_{ij}^* z_j,
\quad \forall~i\in[n].
\]

\begin{lemma}[Invariant decomposition]\label{prop:app_invariant_decomposition}
The space \(\mathbb R^n\) decomposes as
\[
\mathbb R^n = V_+\oplus V_-\oplus V_{\mathrm{mean}},
\]
where
\begin{align*}
V_+
&:=\left\{z\in\mathbb R^n:
z_i=0 \text{ for } i\in S_-,
\ \sum_{i\in S_+}z_i=0\right\},\\
V_-
&:=\left\{z\in\mathbb R^n:
z_i=0 \text{ for } i\in S_+,
\ \sum_{i\in S_-}z_i=0\right\},\\
V_{\mathrm{mean}}
&:=\left\{z\in\mathbb R^n:
z_i=u_+ \text{ on } S_+,\ z_i=u_- \text{ on } S_-
\text{ for some } u_\pm\in\mathbb R\right\}.
\end{align*}
Each of these subspaces is invariant under \(L_k\).
\end{lemma}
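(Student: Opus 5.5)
The plan has two parts: first show that $\mathbb R^n$ is the direct sum of the three subspaces, then check that $L_k$ maps each one into itself. Throughout we assume $n_+,n_-\ge1$, since the sign pattern is nonconstant.

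\emph{Direct sum.} The dimensions are $\dim V_+=n_+-1$, $\dim V_-=n_--1$ and $\dim V_{\mathrm{mean}}=2$, which add up to $n$. Every $z\in\mathbb R^n$ splits explicitly: let $u_\pm$ be the averages of $z$ over $S_\pm$, and put $z^{\mathrm{mean}}$ equal to $u_\pm$ on $S_\pm$. Then $z-z^{\mathrm{mean}}$ restricted to $S_+$ (and extended by zero) lies in $V_+$, and the analogous restriction to $S_-$ lies in $V_-$. Hence the three subspaces span $\mathbb R^n$. Given the dimension count, the sum is direct. Alternatively, directness follows because the subspaces are pairwise orthogonal: $V_+$ and $V_-$ have disjoint supports, and each has zero sum on every block on which a mean vector is constant.

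\emph{Invariance.} We use the block structure of $K^*$ recorded in \Cref{app_subsec:linear}, together with the fact that $\gamma_i=\gamma_\pm$ on $S_\pm$. Because $\langle x_i^*,Vx_j^*\rangle=\lambda_p s_is_j$, the entry $K^*_{ij}$ depends only on the blocks containing $i$ and $j$. Consequently
\[
\sum_{j}K^*_{ij}z_j=a_+\sum_{j\in S_+}z_j+b_+\sum_{j\in S_-}z_j \quad\text{for } i\in S_+,
\]
with the analogous formula (using $b_-,a_-$) for $i\in S_-$. Take $z\in V_+$: both block sums vanish, so the coupling term is zero and $L_kz=-\gamma_+z\in V_+$. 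The same argument gives $L_kz=-\gamma_-z$ for $z\in V_-$. Take $z\in V_{\mathrm{mean}}$: then $(L_kz)_i$ depends on $i$ only through its block, so $L_kz\in V_{\mathrm{mean}}$. In the coordinates $(u_+,u_-)$ it acts by the matrix $B_k$ of \eqref{bary_dot}.

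No step is a real obstacle. The one point that needs care is that $\gamma_i$ is constant on each block, that is, $s_i\sum_jK^*_{ij}s_j$ depends only on the sign of $s_i$. This follows directly from the block-constant form of $K^*$.
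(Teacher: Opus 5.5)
Your proposal is correct and follows essentially the same route as the paper: the block-constant form of $K^*$ and of $\gamma_i$ makes the coupling term vanish on $V_\pm$ (so $L_k$ acts there as $-\gamma_\pm$) and preserves block-constancy on $V_{\mathrm{mean}}$, while the decomposition comes from subtracting the group means. Your dimension count (or pairwise orthogonality) is a slightly more explicit justification that the sum is direct, a point the paper simply asserts as uniqueness of the mean-subtraction splitting.
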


\begin{proof}
Since \(K_{ij}^*\) is blockwise constant and \(\gamma_i\) is constant on each sign group,
we have
\[
(L_k z)_i
=
-\gamma_+ z_i+\lambda_k\Bigl(a_+\sum_{j\in S_+}z_j+b_+\sum_{j\in S_-}z_j\Bigr),
\quad i\in S_+,
\]
and
\[
(L_k z)_i
=
-\gamma_- z_i+\lambda_k\Bigl(b_-\sum_{j\in S_+}z_j+a_-\sum_{j\in S_-}z_j\Bigr),
\quad i\in S_-.
\]

Let \(z\in V_+\). Then \(z_i=0\) on \(S_-\) and \(\sum_{i\in S_+}z_i=0\), so
\[
\sum_{j\in S_+}z_j=0,
\quad
\sum_{j\in S_-}z_j=0.
\]
Hence
\[
(L_k z)_i=-\gamma_+ z_i \quad (i\in S_+),
\quad
(L_k z)_i=0 \quad (i\in S_-).
\]
Moreover,
\[
\sum_{i\in S_+}(L_k z)_i
=
-\gamma_+\sum_{i\in S_+}z_i
=
0.
\]
Thus \(L_k z\in V_+\), and \(V_+\) is invariant.

The proof for \(V_-\) is identical, so we omit here.

Now let \(z\in V_{\mathrm{mean}}\). Then there exist \(u_+,u_-\in\mathbb R\) such that
\[
z_i=u_+ \quad (i\in S_+),
\quad
z_i=u_- \quad (i\in S_-).
\]
Therefore
\[
\sum_{j\in S_+}z_j=n_+u_+,
\quad
\sum_{j\in S_-}z_j=n_-u_-.
\]
Substituting into the formulas above, we see that \((L_k z)_i\) is constant on \(S_+\)
and constant on \(S_-\). Hence \(L_k z\in V_{\mathrm{mean}}\), so
\(V_{\mathrm{mean}}\) is invariant.

The direct-sum decomposition is immediate: for any \(z\in\mathbb R^n\), subtracting the
group means on \(S_+\) and \(S_-\) yields a unique decomposition into a zero-mean part on
\(S_+\), a zero-mean part on \(S_-\), and a blockwise constant part. In other words,
\(\mathbb R^n=V_+\oplus V_-\oplus V_{\mathrm{mean}}\).
\end{proof}

\begin{proposition}[Spectrum of the sign-split linearized operator]\label{prop:app_spectrum_sign_split}
Fix \(k\ne p\). The spectrum of \(L_k\) is given as follows.

On \(V_+\), the only eigenvalue is
\[
-\gamma_+
\]
with multiplicity \(n_+-1\).

On \(V_-\), the only eigenvalue is
\[
-\gamma_-
\]
with multiplicity \(n_--1\).

On \(V_{\mathrm{mean}}\), the eigenvalues are precisely those of the matrix
\[
B_k=
\begin{pmatrix}
\lambda_k n_+ a_+ - \gamma_+ & \lambda_k n_- b_+\\
\lambda_k n_+ b_- & \lambda_k n_- a_- - \gamma_-
\end{pmatrix}.
\]
\end{proposition}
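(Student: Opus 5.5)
The plan is to use the invariant decomposition of \Cref{prop:app_invariant_decomposition}: the spectrum of \(L_k\) is the union, with multiplicities, of the spectra of its restrictions to \(V_+\), \(V_-\) and \(V_{\mathrm{mean}}\). Indeed, since \(\mathbb R^n=V_+\oplus V_-\oplus V_{\mathrm{mean}}\) with each summand invariant, a basis adapted to this splitting puts \(L_k\) in block-diagonal form. The characteristic polynomial then factors as the product of those of the three restrictions, and it suffices to compute each restriction separately.

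On \(V_+\), the computation in the proof of \Cref{prop:app_invariant_decomposition} already gives
\[
(L_kz)_i=-\gamma_+z_i \ \ (i\in S_+),\qquad (L_kz)_i=0 \ \ (i\in S_-),\qquad z\in V_+ .
\]
Since \(z_i=0\) on \(S_-\), this says \(L_kz=-\gamma_+z\). So \(L_k|_{V_+}=-\gamma_+\,\mathrm{Id}\). Moreover \(\dim V_+=n_+-1\), because \(V_+\) is cut out of \(\mathbb R^{S_+}\) by the single nontrivial linear condition \(\sum_{i\in S_+}z_i=0\). Hence \(-\gamma_+\) is the only eigenvalue on \(V_+\), with multiplicity \(n_+-1\). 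The case of \(V_-\) is identical and gives \(-\gamma_-\) with multiplicity \(n_--1\). If \(n_\pm=1\), the corresponding subspace is trivial, and the statement is read accordingly.

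On \(V_{\mathrm{mean}}\), I would use the basis \(\mathbf 1_{S_+},\mathbf 1_{S_-}\), so that \(z=u_+\mathbf 1_{S_+}+u_-\mathbf 1_{S_-}\). Substituting \(\sum_{j\in S_+}z_j=n_+u_+\) and \(\sum_{j\in S_-}z_j=n_-u_-\) into the blockwise formulas for \(L_k\) gives, on \(S_+\),
\[
(\lambda_k n_+a_+-\gamma_+)u_++\lambda_k n_-b_+u_-,
\]
and, on \(S_-\),
\[
\lambda_k n_+b_-u_++(\lambda_k n_-a_--\gamma_-)u_-.
\]
Thus the matrix of \(L_k|_{V_{\mathrm{mean}}}\) in this basis is exactly \(B_k\), and the eigenvalues on \(V_{\mathrm{mean}}\) are those of \(B_k\).

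There is no real obstacle here. The only care needed is the bookkeeping of multiplicities, namely that \((n_+-1)+(n_--1)+2=n\) accounts for the full spectrum. This follows from the direct-sum decomposition together with the block-diagonal factorization of the characteristic polynomial.
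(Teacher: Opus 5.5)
Your proposal is correct and follows essentially the same route as the paper. You restrict \(L_k\) to the three invariant summands from \Cref{prop:app_invariant_decomposition}, note that \(L_k|_{V_\pm}=-\gamma_\pm\,\mathrm{Id}\) with \(\dim V_\pm=n_\pm-1\), and identify \(B_k\) as the matrix of \(L_k|_{V_{\mathrm{mean}}}\) in the basis \(\mathbf 1_{S_+},\mathbf 1_{S_-}\), which is the same as the paper's \(\dot u=B_k u\) computation. Your block-diagonal and characteristic-polynomial bookkeeping just spells out the paper's closing step of combining the three invariant pieces.
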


\begin{proof}
By \Cref{prop:app_invariant_decomposition}, the decomposition
\[
\mathbb R^n=V_+\oplus V_-\oplus V_{\mathrm{mean}}
\]
is \(L_k\)-invariant.

If \(z\in V_+\), then, as shown in the proof of
\Cref{prop:app_invariant_decomposition},
\[
(L_k z)_i=-\gamma_+ z_i \quad (i\in S_+),
\quad
(L_k z)_i=0 \quad (i\in S_-).
\]
Hence
\[
L_k|_{V_+}=-\gamma_+ I,
\]
and therefore the only eigenvalue on \(V_+\) is \(-\gamma_+\), with multiplicity
\[
\dim V_+=n_+-1.
\]

Similarly,
\[
L_k|_{V_-}=-\gamma_- I,
\]
so the only eigenvalue on \(V_-\) is \(-\gamma_-\), with multiplicity
\[
\dim V_-=n_--1.
\]

It remains to analyze \(V_{\mathrm{mean}}\). Let \(z\in V_{\mathrm{mean}}\), so that
\[
z_i=u_+ \quad (i\in S_+),
\quad
z_i=u_- \quad (i\in S_-).
\]
Then
\[
\sum_{j\in S_+}z_j=n_+u_+,
\quad
\sum_{j\in S_-}z_j=n_-u_-.
\]
Substituting into the definition of \(L_k\), we obtain
\[
\dot u_+
=
-\gamma_+u_+ + \lambda_k(n_+a_+u_+ + n_-b_+u_-),
\]
\[
\dot u_-
=
-\gamma_-u_- + \lambda_k(n_+b_-u_+ + n_-a_-u_-).
\]
Equivalently,
\[
\binom{\dot u_+}{\dot u_-}
=
B_k\binom{u_+}{u_-}.
\]
Hence the eigenvalues of \(L_k\) on \(V_{\mathrm{mean}}\) are precisely the eigenvalues of
\(B_k\). Combining the three invariant pieces yields the result.
\end{proof}

%============================================================================================================================

\subsection{Proof of \Cref{stability_split}}\label{app_subsec:thm52}

\begin{proof}
By \Cref{lem:app_pure_mode_linearization}, the linearization around the sign-split equilibrium
\eqref{eq:sign_split_equilibrium} decouples mode by mode over the transverse eigendirections
\(e_k\), \(k\ne p\). For each such \(k\), the corresponding modewise system is
\eqref{eq:app_mode_system_sign_split} on \(\mathbb R^n\).

By \Cref{prop:app_invariant_decomposition}, this system admits the invariant decomposition
\[
\mathbb R^n=V_+\oplus V_-\oplus V_{\mathrm{mean}},
\]
and \Cref{prop:app_spectrum_sign_split} gives the corresponding spectrum. More precisely,
for each \(k\ne p\), the eigenvalues consist of
\[
-\gamma_+ \quad \text{with multiplicity } n_+-1,
\quad
-\gamma_- \quad \text{with multiplicity } n_--1,
\]
together with the two eigenvalues of \(B_k\).

Therefore the full linearization is linearly asymptotically stable if and only if all these
eigenvalues have negative real part. The scalar eigenvalues \(-\gamma_+\) and \(-\gamma_-\)
are negative if and only if
\[
\gamma_+>0,
\quad
\gamma_->0.
\]
For the \(2\times2\) block \(B_k\), both eigenvalues
have negative real part if and only if
\[
\operatorname{tr}(B_k)<0,
\quad
\det(B_k)>0.
\]
Hence the sign-split equilibrium \eqref{eq:sign_split_equilibrium} is linearly asymptotically
stable if and only if
\[
\gamma_+>0,\quad \gamma_->0,
\]
and, for every \(k\ne p\),
\[
\operatorname{tr}(B_k)<0,
\quad
\det(B_k)>0.
\]
This proves \Cref{stability_split}.
\end{proof}
    \section{Proofs for \Cref{sec_global}}\label{app_proof_6}
    In this appendix, we provide the technical proofs omitted from \Cref{sec_global}.
%============================================================================================================================
    \subsection{Detailed proof of \Cref{thm:cone_selection_e1}}\label{proof_thm61}
    
    \begin{proof}[Proof of \Cref{thm:cone_selection_e1}]
    We divide the proof into four steps.
    
    \medskip
    \noindent
    \textbf{Step 1 (Uniform positivity of the first mode):}
    Recall that
    \[
    \phi_i
    =
    \sum_{l=1}^d c_{i,l}\lambda_l\sum_{j=1}^nK_{ij}c_{j,l}
    =
    \left\langle
    x_i,\sum_{j=1}^nK_{ij}Vx_j
    \right\rangle.
    \]
    Since \(V\) is symmetric with eigenvalues $\{\lambda_i\}_{i=1}^d$, the assumption
    \[
    \lambda_1>\max_{k\ge2}|\lambda_k|
    \]
    implies that
    \[
    \|V\|_{\mathrm{op}}=\lambda_1,
    \]
    where \(\|\cdot\|_{\mathrm{op}}\) denotes the operator norm. Hence, for any unit vectors \(x,y\in\mathbb S^{d-1}\),
    \[
    |\langle x,Vy\rangle|
    \le \|V\|_{\mathrm{op}}
    =\lambda_1.
    \]
    Therefore,
    \begin{align}\label{philambda}
    \phi_i
    =
    \sum_{j=1}^nK_{ij}\langle x_i,Vx_j\rangle
    \le
    \sum_{j=1}^nK_{ij}\lambda_1
    =
    \lambda_1,
    \end{align}
    where we used \(K_{ij}\ge0\) and \(\sum_{j=1}^nK_{ij}=1\).
    
    Now define and set as follows:
    \[
    m(t):=\min_{1\le i\le n} c_{i,1}(t),\quad i_*(t)\in\argmin_{i\in[n]}c_{i,1}(t)
    \]
    Since \(c_{j,1}(t)\ge m(t)\) for all \(j\),
    \[
    \sum_{j=1}^nK_{i_*j}c_{j,1}(t)\ge m(t).
    \]
    Using the equation for \(c_{i,1}\), we obtain
    \[
    \dot c_{i_*,1}
    =
    \lambda_1\sum_{j=1}^nK_{i_*j}c_{j,1}-\phi_{i_*}c_{i_*,1}
    \ge
    \lambda_1 m(t)-\phi_{i_*}m(t)
    =
    (\lambda_1-\phi_{i_*})m(t)\ge0,
    \]
    where we use \eqref{philambda} in the last inequality.
    Thus the minimum \(m(t)\) is nondecreasing, and therefore
    \[
    c_{i,1}(t)\ge m(t)\ge m(0)\ge\delta,\quad\forall~i\in[n],~~t\ge0.    \]
    This proves that \(\mathcal C_\delta\) is forward invariant.
    
    \medskip
    \noindent
    \textbf{Step 2 (Evolution equation for the transverse ratios):}
    Fix \(k\neq1\), and define
    \[
    r_{i,k}:=\frac{c_{i,k}}{c_{i,1}}.
    \]
    By \textbf{Step 1}, the denominator stays uniformly positive, so \(r_{i,k}\) is well defined for all \(t\ge0\).
    
    We differentiate $r_{i,k}$ with substituting \eqref{C_ODE} to derive
    \[
    \dot r_{i,k}
    =
    \frac{1}{c_{i,1}}
    \sum_{j=1}^nK_{ij}c_{j,1}
    \bigl(\lambda_k r_{j,k}-\lambda_1 r_{i,k}\bigr).
    \]
    where the $\phi_i$-terms canceled out.
    Equivalently,
    \begin{align}\label{eq:app_ratio_dynamics_main}
    	\dot r_{i,k}
    	=
    	\alpha_i
    	\left(
    	\lambda_k\sum_{j=1}^n\omega_{ij}r_{j,k}
    	-\lambda_1 r_{i,k}
    	\right),
    \end{align}
    where
    \[
    \alpha_i:=
    \frac{\sum_{j=1}^nK_{ij}c_{j,1}}{c_{i,1}},
    \quad
    \omega_{ij}:=
    \frac{K_{ij}c_{j,1}}{\sum_{\ell=1}^nK_{i\ell}c_{\ell,1}}.
    \]
    Clearly, one can see that
    \[
    \omega_{ij}\ge0,
    \quad
    \sum_{j=1}^n\omega_{ij}=1.
    \]
    Moreover, using \(c_{j,1}\ge\delta\) and \(c_{i,1}\le1\), we get
    \[
    \alpha_i
    =
    \frac{\sum_{j=1}^nK_{ij}c_{j,1}}{c_{i,1}}
    \ge
    \sum_{j=1}^nK_{ij}c_{j,1}
    \ge
    \delta.
    \]
    
    \medskip
    \noindent
    \textbf{Step 3 (Exponential decay of the transverse ratios):}
    Fix \(k\neq1\), and for simplicity write
    \[
    r_i:=r_{i,k},
    \quad
    R(t):=\max_{1\le i\le n}|r_i(t)|.
    \]
    Since each \(r_i\) is \(C^1\), each \(|r_i|\) is locally Lipschitz. Therefore \(R(t)\) is locally Lipschitz, and hence differentiable for a.e.\ \(t\ge0\).
    
    Let \(i^*(t)\in[n]\) be such that
    \[
    |r_{i^*}(t)|=R(t).
    \]

    We consider two cases.

\emph{Case 1:} $r_{i^*}(t)=R(t)\ge 0$. Since $r_j(t)\le R(t)$ for all $j$ and
\[
\omega_{i^*j}\ge 0,
\quad
\sum_{j=1}^n \omega_{i^*j}=1,
\]
we have
\[
-R(t)\le \sum_{j=1}^n \omega_{i^*j} r_j(t)\le R(t),
\]
and therefore
\[
\lambda_k\sum_{j=1}^n \omega_{i^*j} r_j(t)\le |\lambda_k|\,R(t).
\]
Substituting $r_{i^*}(t)=R(t)$ into \eqref{eq:app_ratio_dynamics_main}, we obtain
\[
\dot r_{i^*}
=
\alpha_{i^*}
\left(
\lambda_k\sum_{j=1}^n \omega_{i^*j} r_j-\lambda_1 R(t)
\right)
\le
-\alpha_{i^*}(\lambda_1-|\lambda_k|)R(t).
\]
By Step~2, we have $\alpha_i\ge \delta$ for every $i\in[n]$, and in particular
$\alpha_{i^*}\ge \delta$. Hence
\[
\dot r_{i^*}(t)\le -\delta(\lambda_1-|\lambda_k|)R(t).
\]
Since in this case $|r_{i^*}|=r_{i^*}$, it follows that
\[
\frac{d}{dt}|r_{i^*}(t)|
=
\dot r_{i^*}(t)
\le
-\delta(\lambda_1-|\lambda_k|)R(t).
\]

\emph{Case 2:} $r_{i^*}(t)=-R(t)\le 0$. Then $r_j(t)\ge -R(t)$ for all $j$, and again
\[
-R(t)\le \sum_{j=1}^n \omega_{i^*j} r_j(t)\le R(t).
\]
Hence
\[
\lambda_k\sum_{j=1}^n \omega_{i^*j} r_j(t)\ge -|\lambda_k|\,R(t).
\]
Using $r_{i^*}(t)=-R(t)$ in \eqref{eq:app_ratio_dynamics_main}, we get
\[
\dot r_{i^*}
=
\alpha_{i^*}
\left(
\lambda_k\sum_{j=1}^n \omega_{i^*j} r_j+\lambda_1 R(t)
\right)
\ge
\alpha_{i^*}(\lambda_1-|\lambda_k|)R(t)
\ge
\delta(\lambda_1-|\lambda_k|)R(t).
\]
Since now $|r_{i^*}|=-r_{i^*}$, we obtain
\[
\frac{d}{dt}|r_{i^*}(t)|
=
-\dot r_{i^*}(t)
\le
-\delta(\lambda_1-|\lambda_k|)R(t).
\]

Collecting the two cases, we conclude that for a.e.\ $t\ge 0$,
\[
\dot R(t)\le -\delta(\lambda_1-|\lambda_k|)R(t).
\]
Hence, by Gr\"onwall's inequality,
\[
R(t)\le R(0)e^{-\delta(\lambda_1-|\lambda_k|)t}.
\]
    
    We consider two cases.
    
    \medskip
    \noindent
    \emph{Case 1 (\(r_{i^*}(t)=R(t)\ge0\)):}
    Since \(r_j(t)\le R(t)\) and \(\omega_{i^*j}\ge0\) for all $j$ with \(\sum_j\omega_{i^*j}=1\), we have
    \[
    -R(t)\le \sum_{j=1}^n\omega_{i^*j}r_j(t)\le R(t),
    \]
    which leads to
    \[
    \lambda_k\sum_{j=1}^n\omega_{i^*j}r_j(t)\le |\lambda_k|R(t).
    \]
    Substituting \(r_{i^*}(t)=R(t)\) in \eqref{eq:app_ratio_dynamics_main}, we obtain
    \[\dot r_{i^*}=\alpha_{i^*}\left(\lambda_k\sum_{j=1}^n \omega_{i^*j} r_j-\lambda_1 R(t)\right)\le-\alpha_{i^*}(\lambda_1-|\lambda_k|)R(t).\]
    By \textbf{Step 2}, we have $\alpha_i\ge \delta$ for every $i\in[n]$, and in particular
    $\alpha_{i^*}\ge \delta$. Hence
    \begin{align}\label{rdot}
        \dot r_{i^*}(t)\le -\delta(\lambda_1-|\lambda_k|)R(t).
    \end{align}
    Since in this case \(|r_{i^*}|=r_{i^*}\), it follows that
    \[
    \frac{d}{dt}|r_{i^*}(t)|
    =
    \dot r_{i^*}(t)
    \le
    -\delta(\lambda_1-|\lambda_k|)R(t).
    \]
    
    \medskip
    \noindent
    \emph{Case 2 (\(r_{i^*}(t)=-R(t)\le0\)):}
    Then \(r_j(t)\ge -R(t)\) for all \(j\), and again
    \[
    -R(t)\le \sum_{j=1}^n\omega_{i^*j}r_j(t)\le R(t).
    \]
    Therefore,
    \[
    \lambda_k\sum_{j=1}^n\omega_{i^*j}r_j(t)\ge -|\lambda_k|R(t).
    \]
    Using \(r_{i^*}(t)=-R(t)\) in \eqref{eq:app_ratio_dynamics_main}, we get
    \[
    \dot r_{i^*}
    =
    \alpha_{i^*}
    \left(
    \lambda_k\sum_{j=1}^n\omega_{i^*j}r_j+\lambda_1R(t)
    \right)
    \ge
    \alpha_{i^*}(\lambda_1-|\lambda_k|)R(t)
    \ge
    \delta(\lambda_1-|\lambda_k|)R(t).
    \]
    Since now \(|r_{i^*}|=-r_{i^*}\), we have the same result in \eqref{rdot}.
    
    \medskip
    Collecting the results from two cases above, one obtains
    \[\dot R(t)\le -\delta(\lambda_1-|\lambda_k|)R(t)\quad\text{for a.e. }t\ge0.\]
    Hence, by Gronwall's inequality,
    \[
    R(t)\le R(0)e^{-\delta(\lambda_1-|\lambda_k|)t}.
    \]
    Thus, for every \(k\neq1\) with $R_k(0):=\max_{i\in[n]}|r_{i,k}(0)|$, we derive
    \[
    \max_{1\le i\le n}\left|\frac{c_{i,k}(t)}{c_{i,1}(t)}\right|
    \le
    R_k(0)e^{-\delta(\lambda_1-|\lambda_k|)t}.
    \]
    
    \medskip
    \noindent
    \textbf{Step 4 (Convergence to the first mode):}
    For each \(k\neq1\), \textbf{Step 3} shows that
    \[
    \left|\frac{c_{i,k}(t)}{c_{i,1}(t)}\right|\to0
    \quad\text{uniformly in }i,
    \]
    with
    \[
    \lim_{t\to\infty}|c_{i,k}(t)|
    \le
    \lim_{t\to\infty}R_k(0)e^{-\delta(\lambda_1-|\lambda_k|)t}=0,
    \quad \forall~k\ne1.
    \]
    By \(\|x_i(t)\|=1\), this leads to 
    \[
    c_{i,1}(t)^2\to1.
    \]
    On the other hand, \textbf{Step 1} gives \(c_{i,1}(t)\ge\delta>0\), so the negative branch is excluded and therefore
    \[
    c_{i,1}(t)\to1.
    \]
    Consequently,
    \[
    \lim_{t\to\infty}x_i(t)=\lim_{t\to\infty}\sum_{k=1}^d c_{i,k}(t)e_k=e_1,
    \quad\forall~i\in[n],
    \]
    which completes the proof.
    \end{proof}
    %============================================================================================================================

    \subsection{Detailed proof of \Cref{rho_monotone_two_particle}}\label{proof_lem63}
    \begin{proof}[Proof of \Cref{rho_monotone_two_particle}]
    Set
    \[
    x:=x_1,\quad y:=x_2,\quad \rho:=\langle x,y\rangle.
    \]
    Since \(V\) is symmetric and negative definite, the matrix
    \[
    B:=-V
    \]
    is symmetric and positive definite. We introduce
    \[
    s:=x+y,\quad d:=x-y.
    \]
    to rewrite
    \[
    x=\frac{s+d}{2},\quad y=\frac{s-d}{2},
    \]
    and
    \[
    \langle s,d\rangle=0,\quad |s|^2=2(1+\rho),\quad |d|^2=2(1-\rho).
    \]
    In particular, if \(-1<\rho<1\), then \(s\ne0\) and \(d\neq 0\).
    
    Define the three scalar quantities
    \[
    A(s):=\langle s,Bs\rangle,\quad
    C(s,d):=\langle s,Bd\rangle,\quad
    D(d):=\langle d,Bd\rangle.
    \]
    We omit the dependency for the notational simplicity. Since the matrix \(B\) is positive definite, we have
    \[
    A>0,\quad D>0.
    \]
    Moreover, by the Cauchy--Schwarz inequality for the inner product induced by \(B\),
    \[
    C^2\le AD.
    \]
    Hence there exists \(E(s,d)\ge0\) such that
    \[
    A=\frac{C^2}{D}+E.
    \]
    
    We next compute the three attention scores
    \[
    a(x):=\langle x,Vx\rangle,\quad
    b(x,y):=\langle x,Vy\rangle=\langle y,Vx\rangle,\quad
    c(y):=\langle y,Vy\rangle.
    \]
    Since \(V=-B\) and \(x=(s+d)/2\), \(y=(s-d)/2\), one obtains
    \begin{align*}
    a
    &=\left\langle \frac{s+d}{2},\,V\frac{s+d}{2}\right\rangle
     =-\frac14\langle s+d,B(s+d)\rangle
     =-\frac14(A+2C+D),\\
    b
    &=\left\langle \frac{s+d}{2},\,V\frac{s-d}{2}\right\rangle
     =-\frac14\langle s+d,B(s-d)\rangle
     =-\frac14(A-D),\\
    c
    &=\left\langle \frac{s-d}{2},\,V\frac{s-d}{2}\right\rangle
     =-\frac14\langle s-d,B(s-d)\rangle
     =-\frac14(A-2C+D).
    \end{align*}
    
    For the two-particle system, the softmax weights are
    \[
    w_{11}=\frac{e^{\beta a}}{e^{\beta a}+e^{\beta b}},
    \quad
    w_{12}=\frac{e^{\beta b}}{e^{\beta a}+e^{\beta b}},
    \quad
    w_{21}=\frac{e^{\beta b}}{e^{\beta b}+e^{\beta c}},
    \quad
    w_{22}=\frac{e^{\beta c}}{e^{\beta b}+e^{\beta c}}.
    \]
    Denoting
    \[
    \eta_x:=w_{11}-w_{12},
    \quad
    \eta_y:=w_{21}-w_{22}.
    \]
    one can rewrite it as
    \[
    w_{11}=\frac{1+\eta_x}{2},\quad
    w_{12}=\frac{1-\eta_x}{2},\quad
    w_{21}=\frac{1+\eta_y}{2},\quad
    w_{22}=\frac{1-\eta_y}{2}.
    \]
    
    Since
    \[
    \dot x=P_x^\perp(w_{11}Vx+w_{12}Vy),\quad
    \dot y=P_y^\perp(w_{21}Vx+w_{22}Vy),
    \]
    we get
    \begin{align*}
    \dot\rho=w_{11}(b-\rho a)+w_{12}(c-\rho b)+w_{21}(a-\rho b)+w_{22}(b-\rho c).
    \end{align*}
    A direct computation from the above expressions for \(a,b,c\) yields
    \begin{align*}
    b-\rho a
    &=\frac{-(1-\rho)A+2\rho C+(1+\rho)D}{4},\\
    c-\rho b
    &=\frac{-(1-\rho)A+2C-(1+\rho)D}{4},\\
    a-\rho b
    &=\frac{-(1-\rho)A-2C-(1+\rho)D}{4},\\
    b-\rho c
    &=\frac{-(1-\rho)A-2\rho C+(1+\rho)D}{4}.
    \end{align*}
    Substituting these identities and the representation of the weights in terms of \(\eta_x,\eta_y\), we obtain
    \begin{align}\label{eq:app_rho_ADC_eta}
    \dot\rho
    =
    -\frac{1-\rho}{2}A
    -\frac{1+\rho}{4}D(\eta_y-\eta_x)
    -\frac{1-\rho}{4}C(\eta_x+\eta_y).
    \end{align}
    
    We now compute \(\eta_x\) and \(\eta_y\) explicitly. Since
    \[
    a-b=-\frac{C+D}{2},
    \quad
    b-c=\frac{D-C}{2},
    \]
    we get
    \[
    \eta_x
    =
    \frac{e^{\beta a}-e^{\beta b}}{e^{\beta a}+e^{\beta b}}
    =
    \tanh\!\left(\frac{\beta}{2}(a-b)\right)
    =
    -\tanh\!\left(\frac{\beta}{4}(C+D)\right),
    \]
    and
    \[
    \eta_y
    =
    \frac{e^{\beta b}-e^{\beta c}}{e^{\beta b}+e^{\beta c}}
    =
    \tanh\!\left(\frac{\beta}{2}(b-c)\right)
    =
    \tanh\!\left(\frac{\beta}{4}(D-C)\right).
    \]
    Setting
    \[
    p:=\frac{\beta}{4}(C+D),\quad q:=\frac{\beta}{4}(D-C),
    \quad
    \Delta:=\cosh p\,\cosh q,
    \]
    we obtain
    \begin{align}\label{eta_relation}
    \begin{aligned}
    \eta_y-\eta_x
    &=\tanh q+\tanh p
     =\frac{\sinh(\beta D/2)}{\Delta},\\
    \eta_x+\eta_y
    &=-\tanh p+\tanh q
     =-\frac{\sinh(\beta C/2)}{\Delta},
    \end{aligned}
    \end{align}
    where we used
    \[
    \tanh q+\tanh p=\frac{\sinh(p+q)}{\cosh p\,\cosh q},
    \quad
    \tanh q-\tanh p=\frac{\sinh(q-p)}{\cosh p\,\cosh q},
    \]
    Substituting \eqref{eta_relation} into \eqref{eq:app_rho_ADC_eta}, we have the following equation:
    \begin{align}\label{eq:app_rho_pre_r_t}
    \dot\rho
    =
    -\frac{1-\rho}{2}A
    -\frac{1+\rho}{4\Delta}D\sinh\!\left(\frac{\beta D}{2}\right)
    +\frac{1-\rho}{4\Delta}C\sinh\!\left(\frac{\beta C}{2}\right).
    \end{align}
    
    Next, set
    \[
    r:=\frac{\beta D}{2}>0,
    \quad
    t:=\frac{\beta C}{2}\in\mathbb R.
    \]
    Since
    \[
    \Delta
    =
    \cosh\!\left(\frac{r+t}{2}\right)\cosh\!\left(\frac{r-t}{2}\right)
    =
    \frac{\cosh r+\cosh t}{2},
    \]
    and
    \[
    A=\frac{C^2}{D}+E=\frac{2t^2}{\beta r}+E,
    \]
    the equation \eqref{eq:app_rho_pre_r_t} becomes
    \begin{align}\label{rhodot}
    \dot\rho
    =
    -\frac{
    (1-\rho)\beta rE(\cosh r+\cosh t)
    +2(1+\rho)r^2\sinh r
    +2(1-\rho)\Xi_r(t)
    }{
    2\beta r(\cosh r+\cosh t)
    },
    \end{align}
    where
    \[
    \Xi_r(t):=t^2(\cosh r+\cosh t)-rt\sinh t.
    \]
    
    It remains to show that
    \begin{align}\label{Xi_ineq}
    \Xi_r(t)\ge0
    \quad\text{for all }r>0,\ t\in\mathbb R.
    \end{align}
    Since \(\Xi_r\) is even in \(t\), it is enough to consider \(t\ge0\).
    
    If \(0\le t\le r\), as the function \(x\mapsto \sinh x/x\) is increasing on \((0,\infty)\), one has
    \[
    r\sinh t\le t\sinh r.
    \]
    Therefore, with $\cosh r-\sinh r=e^{-r}>0$,
    \[
    rt\sinh t\le t^2\sinh r<t^2(\cosh r+\cosh t),
    \]
    which implies \(\Xi_r(t)>0\).
    
    If \(t\ge r\), then
    \[
    r\sinh t\le t\sinh t,
    \]
    and thus
    \[
    rt\sinh t\le t^2\sinh t<t^2(\cosh r+\cosh t),
    \]
    again implying \(\Xi_r(t)>0\).
    
    Hence the relation \eqref{Xi_ineq} holds with equality only at \(t=0\).
    
    \medskip
    Returning to the expression \eqref{rhodot} for \(\dot\rho\), we note that each term in the numerator is nonnegative, and the second term is in fact strictly positive because
    \[
    -1<\rho<1,\quad r>0,\quad \sinh r>0.
    \]
    Therefore the numerator is strictly positive, and consequently
    \[
    \dot\rho<0.
    \]
    This is the desired result.
    \end{proof}
    %============================================================================================================================
    \subsection{Detailed proof of \Cref{ae_selection_most_negative}}\label{proof_thm65}

    \begin{proof}[Proof of \Cref{ae_selection_most_negative}]
    Let \(\Phi_t\) denote the flow generated by \eqref{TF_ODE_sym} on
    \[
    \mathcal X:=\mathbb S^{d-1}\times \mathbb S^{d-1}.
    \]
    We also introduce the diagonal set
    \[
    \Delta:=\{(x,x):x\in\mathbb S^{d-1}\}\subset\mathcal X.
    \]
    The set \(\Delta\) is exactly the manifold of homogeneous configurations. Since \(\Delta\) is a smooth submanifold of positive codimension in \(\mathcal X\), it has measure zero. Thus it suffices to consider initial data
    \[
    z_0=(x_1(0),x_2(0))\in\mathcal X\setminus\Delta.
    \]
    For such an initial condition, the corresponding solution is nontrivial. Therefore, by \Cref{rho_monotone_two_particle}, the pairwise correlation
    \[
    \rho(t):=\langle x_1(t),x_2(t)\rangle
    \]
    is strictly decreasing on \((-1,1)\), and hence
    \[
    \rho(t)\to -1
    \quad\text{as }t\to\infty
    \]
    by a standard \(\omega\)-limit set argument.
    Therefore
    \[
    |x_1(t)+x_2(t)|^2
    =
    |x_1(t)|^2+|x_2(t)|^2+2\langle x_1(t),x_2(t)\rangle
    =
    2(1+\rho(t))
    \to 0,
    \]
    so every \(\omega\)-limit set is contained in the sign-split manifold $\mathcal M_{\mathrm{bbp}}$.
    
    Let \(\Omega:=\omega(z_0)\) denote the \(\omega\)-limit set of \(z_0\). Since \(\mathcal X\) is compact, \(\Omega\) is nonempty, compact, connected, and invariant.
    By the previous argument,
    \[
    \Omega\subset \mathcal M_{\mathrm{bbp}}.
    \]
    
    \medskip
    \noindent\textbf{Step 1 (Reduction of the \(\omega\)-limit set to a single eigenspace):}
    On \(\mathcal M_{\mathrm{bbp}}\), write
    \[
    x_1=u,\quad x_2=-u,\quad
    u=\sum_{k=1}^d u_k e_k,\quad p_k:=u_k^2.
    \]
    Then the induced flow on \(\mathcal M_{\mathrm{bbp}}\) is exactly the reduced polarized
    dynamics from \Cref{subsec:bipolar_mfd}:
    \[
    \dot p_k = 2p_k\,\alpha(M)(\lambda_k-M),
    \quad
    M=\sum_{k=1}^d \lambda_k p_k,
    \quad
    \alpha(M)=\tanh(\beta M).
    \]
    Since \(V\) is negative definite,
    \[
    M(u)=\langle u,Vu\rangle<0
    \quad\text{for every }u\in\mathbb S^{d-1},
    \]
    and therefore, by \Cref{lem:M_monotone},
    \[
    \dot M
    =
    2\alpha(M)\sum_{k=1}^d p_k(\lambda_k-M)^2
    \le 0.
    \]
    Moreover, \(\dot M=0\) holds if and only if all active modes correspond to the same
    eigenvalue, that is, if and only if
    \[
    u\in E_\lambda\cap\mathbb S^{d-1}
    \quad\text{for some }\lambda\in\sigma(V),
    \]
    where \(E_\lambda:=\ker(V-\lambda I)\).
    
    Hence \(M\) is a Lyapunov function for the reduced flow on
    \(\mathcal M_{\mathrm{bbp}}\), and LaSalle's invariance principle yields
    \[
    \Omega\subset \mathcal E:=\bigcup_{\lambda\in\sigma(V)}\mathcal E_\lambda,
    \quad
    \mathcal E_\lambda:=\{(u,-u):u\in E_\lambda\cap\mathbb S^{d-1}\}.
    \]
    Since \(\Omega\) is connected, there exists \(\lambda_*\in\sigma(V)\) such that
    \[
    \Omega \subset \mathcal E_{\lambda_*}.
    \]
    
    If \(\lambda_*=\lambda_d\), then the simplicity of \(\lambda_d\) implies
    \[
    \mathcal E_{\lambda_d}=\{(e_d,-e_d),\,(-e_d,e_d)\}.
    \]
    Because \(\Omega\) is connected, it follows that \(\Omega\) is a singleton. Hence
    \[
    \Phi_t(z_0)\to (e_d,-e_d)
    \quad\text{or}\quad
    \Phi_t(z_0)\to (-e_d,e_d).
    \]
    Therefore, it remains to show that the set of initial conditions for which
    \[\Omega\subset \mathcal E_\lambda
    \quad\text{for some }\lambda\in\sigma(V)~\mbox{with}~\lambda>\lambda_d,\]
    has measure zero.
    
    \medskip
    \noindent\textbf{Step 2 (Measure-zero exceptional set):} Fix an eigenvalue \(\lambda>\lambda_d\). Then
    \[
    \mathcal E_\lambda=\{(u,-u):u\in E_\lambda\cap\mathbb S^{d-1}\}
    \]
    is a compact equilibrium manifold. Let \(z_*=(u_*,-u_*)\in\mathcal E_\lambda\). By orthogonal invariance of \eqref{TF_ODE_sym}, the linearization at \(z_*\) is conjugate to the linearization at a sign-split equilibrium \((\widetilde e_1,-\widetilde e_1)\) for a diagonal interaction matrix with \(\widetilde e_1=u_*\) and \(\widetilde e_d=e_d\).

    Now apply the sign-split linearization from \Cref{stability_split} in the case \(n=2\). Since \(n_+=n_-=1\), there are no within-group fluctuation modes, and the transverse mode \(\widetilde e_d\) yields the eigenvalue
    \[
    \mu_{d,2}=(\lambda_d-\lambda)\tanh(\beta\lambda)>0,
    \]
    because \(\lambda_d<\lambda<0\) and \(\tanh(\beta\lambda)<0\). Hence every point of \(\mathcal E_\lambda\) has a nontrivial unstable direction, while the zero eigenvalues are precisely tangent to \(\mathcal E_\lambda\). Therefore \(\mathcal E_\lambda\) is a compact normally hyperbolic invariant manifold with nontrivial unstable bundle. By the stable manifold theorem for normally hyperbolic invariant manifolds, its basin of attraction is contained in a smooth manifold of positive codimension in \(\mathcal X\), and hence has measure zero.

    If \(\omega(z_0)\subset\mathcal E_\lambda\), then necessarily \(\operatorname{dist}(\Phi_t(z_0),\mathcal E_\lambda)\to0\), so \(z_0\) belongs to that measure-zero set. Since \(V\) has only finitely many eigenvalues, the union over all \(\lambda>\lambda_d\) is still measure zero. Together with \textbf{Step 1}, this proves that, for almost every initial condition,
    \[
    \Phi_t(z_0)\to (e_d,-e_d)
    \quad\text{or}\quad
    \Phi_t(z_0)\to (-e_d,e_d).
    \]
    This proves the theorem.
    \end{proof}
\end{document}